\documentclass[11pt,a4paper]{article}
\usepackage{amsmath, amssymb, enumitem, dsfont}
\usepackage[a4paper, left=2.5cm, right=2.5cm, top=2.5cm, bottom=2.5cm]{geometry}

\usepackage{caption} 
\usepackage{xcolor} 
\usepackage{amsthm,thmtools} 
\usepackage{graphicx}

\usepackage[colorlinks=true, linkcolor=black, citecolor=black, urlcolor=black]{hyperref}
\usepackage{cleveref}
\crefformat{theorem}{{Theorem~#2#1#3}}
\crefformat{corollary}{{Corollary~#2#1#3}}
\crefformat{proposition}{{Proposition~#2#1#3}}
\crefformat{lemma}{{Lemma~#2#1#3}}
\crefformat{definition}{{Definition~#2#1#3}}
\crefformat{remark}{{Remark~#2#1#3}}
\crefformat{assumption}{{Assumption~#2#1#3}}
\crefformat{section}{{Section~#2#1#3}}
\crefformat{subsection}{{Subsection~#2#1#3}}

\definecolor{bluepoli}{cmyk}{0.4,0.1,0,0.4}

\declaretheoremstyle[
  headfont=\color{bluepoli}\normalfont\bfseries,
  bodyfont=\color{black}\normalfont\itshape,
]{colored}

\newtheorem{theorem}{Theorem}[section]

\newtheorem{corollary}[theorem]{Corollary}
\newtheorem{proposition}[theorem]{Proposition}
\newtheorem{lemma}[theorem]{Lemma}
\newtheorem{definition}[theorem]{Definition}
\newtheorem{remark}[theorem]{Remark}

\numberwithin{equation}{section}

\title{\textbf{On a semilinear heat equation on infinite graphs II: blow-up for arbitrary initial data and global existence}}
\author{
Fabio Punzo\thanks{
Dipartimento di Matematica, Politecnico di Milano,
Piazza Leonardo da Vinci 32, 20133 Milano, Italia.
Email: \texttt{fabio.punzo@polimi.it}.
Membro INdAM e parzialmente supportato dal progetto GNAMPA 2026.
}
\and
Federico Zucchero\thanks{
Dipartimento di Matematica, Politecnico di Milano,
Piazza Leonardo da Vinci 32, 20133 Milano, Italia.
Email: \texttt{federico.zucchero@mail.polimi.it}.
}
}

\date{}

\begin{document}

\maketitle


\abstract{This paper is the second part of the study initiated in \cite{PZ1}
and is devoted to finite-time blow-up and global existence for a
semilinear heat equation on infinite weighted graphs. We first establish basic results on mild and classical solutions (which, to the best of our knowledge, were not previously available in the setting of graphs) proving their equivalence under suitable assumptions and showing the existence of a solution between a given sub- and supersolution.
We then analyze blow-up and global existence on $\mathbb Z^N$,
providing proofs based on methods different from those used on $\mathbb Z^N$ in
the existing literature. Moreover, for graphs with positive
spectral gap, we prove global existence for small initial data.
In contrast with previous functional analytic approaches yielding
mild solutions, our method relies on the construction of
global-in-time supersolutions and leads to the existence of
classical solutions.
}

\medskip
{\it Mathematics Subject Classification}: 35A01, 35A02, 35B44, 35K05, 35K58, 35R02

{\it Keywords}: Semilinear parabolic equations, infinite graphs, blow-up, global existence, sub-- supersolutions.

\bigskip
\medskip


\section{Introduction}\label{section_introduction}
In this paper, we continue the study initiated in \cite{PZ1}
concerning finite-time blow-up and global-in-time existence
of solutions to the Cauchy problem
\begin{equation}\label{problem_HR_G}
\begin{cases}
u_t - \Delta u = u^p & \text{in } X \times (0,T), \\
u = u_0 & \text{in } X \times \{0\},
\end{cases}
\tag{1.1}
\end{equation}
where $(X,\omega,\mu)$ is an infinite weighted graph with edge
weight $\omega$ and node measure $\mu$, $T>0$, $p>1$, and
$\Delta$ denotes the weighted Laplacian on the graph.
We refer to \cite{PZ1} for a comprehensive overview of the
literature related to problem \eqref{problem_HR_G} and, more
generally, to the analysis of elliptic and parabolic equations
on combinatorial graphs. We simply refer the reader to the monographs
\cite{Grig1, KLWb, Mu} for a general introduction to the theory
of graphs and to elliptic and parabolic differential equations
on graphs. Furthermore, we only recall here that blow-up and
global existence for semilinear parabolic equations have been
investigated in several works, including
\cite{vCr, GMP2, LW1, LW2, MPS2, PSa, PZ1, Wu}.

In order to properly frame our results, we briefly review some
contributions from the literature. In \cite{LW2}, nonexistence
results for problem \eqref{problem_HR_G} were established under
suitable geometric assumptions on the graph and appropriate
conditions on $p$. In particular, when $X=\mathbb Z^N$, mild
solutions blow up for $p<1+\frac{2}{N}$ for any nontrivial
initial datum, whereas mild solutions exist for
$p>1+\frac{2}{N}$. The nonexistence of solutions is established by following the proof strategy adopted in \cite{BPT, Weiss1}, which relies on two-sided estimates of the heat kernel. Global existence is obtained via the Picard
iteration method in a suitable complete metric space.
In \cite{MPS2}, the blow-up result was extended to the critical
case $p=1+\frac{2}{N}$; by contrast, the proof is based on the test function method combined with suitable a priori estimates. More generally, \cite{MPS2} deals with graphs satisfying suitable assumptions on volume growth and
considers very weak solutions.

Another class of graphs studied in \cite{GMP2, PSa} consists of
those for which the bottom of the $\ell^2$-spectrum of the
Laplacian, denoted by $\lambda_1$, is positive. In this setting, for any $p>1$,
global existence of mild solutions for sufficiently small
initial data was proved by means of the contraction mapping
principle in an appropriate complete metric space.

\smallskip

We would like to emphasize that our approach is entirely independent of the methods developed in the cited papers. 

As a first step, we establish some basic results for the
semilinear problem \eqref{problem_HR_G} which, to the best of
our knowledge, were not previously available in the literature. In particular, under suitable assumptions, we prove
the equivalence between mild and classical solutions. Moreover,
given a supersolution $\bar u$ and a subsolution
$\underline u$ to \eqref{problem_HR_G} satisfying
$0\leq \underline u \leq \bar u$, we prove the existence of a
solution $u$ such that $\underline u \leq u \leq \bar u$.
While these results play an auxiliary role in the proofs of our
main theorems, we believe that they are of independent interest
and may provide useful tools for researchers working on
semilinear equations on graphs.

Next, by exploiting a result from \cite{PZ1}, based on
Kaplan’s method and concerning nonexistence of global
solutions for large initial data, combined with suitable
estimates for the heat semigroup on $\mathbb Z^N$, we prove
finite-time blow-up of solutions to \eqref{problem_HR_G} on
$\mathbb Z^N$ for any $p \leq 1+\frac{2}{N}$, for every nontrivial initial datum $u_0$. Although this
blow-up threshold was already identified in
\cite{LW2, MPS2}, our proof follows a completely different
strategy and does not make use of the arguments developed
therein. In the setting of $\mathbb{R}^N$, the use of Kaplan’s method to study blow-up phenomena, for arbitrary initial data, can be found for instance in \cite{BLev, deP, Weiss2}. To the best of our knowledge, this approach has not previously been applied in the context of graphs, where we adopt it here for the first time.

We then prove global existence of solutions to
\eqref{problem_HR_G} with $X=\mathbb Z^N$ for any $p>1+\frac 2N$, for sufficiently small initial data.
The proof is based on the construction of a global
supersolution and on the comparison framework developed in
the first part of the paper. This provides an alternative approach to global existence, distinct from the fixed-point and successive approximation methods employed in the literature on graphs. Moreover, whereas
previous works established the existence of mild solutions
via functional analytic methods, here we obtain the existence
of classical solutions.

Finally, we consider graphs with $\lambda_1>0$. Also in this
case, we prove global existence of classical solutions for
sufficiently small initial data, for any $p>1$. Our approach differs from
that in \cite{GMP2, PSa}: instead of using a fixed point
argument, we deduce global existence from the construction of
a suitable global supersolution. As above, while the existing
literature deals with mild solutions, we establish the global
existence of classical solutions.

\medskip

The paper is organized as follows. In
Section \ref{section_mathematical_framework}, we introduce
notation and basic definitions concerning graphs. In
Section \ref{section_main_results}, we state our main results,
including both the auxiliary results described above and the
principal theorems on blow-up and global existence.
Section \ref{subsection_H_E_graph_setting} is devoted to the
heat equation on graphs; in particular, we state some
estimates for the heat kernel on $\mathbb Z^N$, essentially
contained in \cite{book_Barlow}. In Section \ref{ardim}, we
prove the auxiliary results concerning mild and classical
solutions. Section \ref{section_proofs} is devoted to the
proofs of blow-up and global existence on $\mathbb Z^N$,
while in Section \ref{subsection_proofs_2} we establish
global existence results for graphs with $\lambda_1>0$.


\section{Mathematical framework}\label{section_mathematical_framework}


\subsection{The graph setting}\label{subsection_graph_setting}

\medskip
\begin{definition}\label{definition_weighted_graph}
Let \( X \) be an infinitely countable set and consider a function \( \mu : X \to (0,+\infty) \). Moreover, let \( \omega : X \times X \to [0,+\infty) \) be a map satisfying the following properties:
\begin{enumerate}
\renewcommand{\labelenumi}{\alph{enumi})}
    \item \( \omega(x,x) = 0 \) for all \( x \in X \);
    \item \( \omega \) is symmetric, that is, \( \omega(x,y) = \omega(y,x) \) for all \( x,y \in X \);
    \item \(\displaystyle \sum_{y \in X} \omega(x,y) < +\infty\) for all \( x \in X \).
\end{enumerate}
Then the triplet \( (X, \omega, \mu) \) is called weighted graph, and the functions \( \mu \) and \( \omega \) are referred to as vertex (or node) measure and edge weight, respectively. Two vertices \( x,y \in X \) are said to be adjacent (or equivalently connected, joint or neighbors) whenever \( \omega(x, y) > 0 \); in this case we write \( x \sim y \), and the pair \( (x,y) \) defines an edge of the graph with endpoints \( x,y \).
\end{definition}

\medskip
\noindent Observe that, since the weight function \( \omega \) determines the edges of a weighted graph, condition a) in \cref{definition_weighted_graph} implies the absence of \emph{self-loops}, that is, there are no edges of type \( (x,x) \). Moreover, condition b) in \cref{definition_weighted_graph} guarantees that the graph is \emph{undirected}, meaning that its edges do not have an orientation.

\medskip
\noindent The definition of the node measure \( \mu \) can be extended to the power set \( \mathcal{P}(X) \) as follows:
\begin{equation}\label{mu_extended}
\begin{split}
\mu(A) &:= \sum_{x \in A} \mu(x) \hspace{1.6em} \text{for every } A \in \mathcal{P}(X) \setminus \left\{ \emptyset \right\}, \\
\mu(\emptyset) &:= 0.
\end{split}
\end{equation}
\noindent Such extension defines a measure; in particular, \( (X, \mathcal{P}(X), \mu) \) is a measure space.

\medskip
\begin{definition}\label{definition_locally_finite}
The weighted graph \( (X, \omega, \mu) \) is said to be
\begin{enumerate}
\renewcommand{\labelenumi}{\alph{enumi})}
    \item locally finite if each vertex has only finitely many neighbors, namely
    \[
    \left| \{ y \in X : y \sim x \} \right| < +\infty, \qquad \quad \text{for all } x \in X;
    \]
    \item connected if for all \( x, y \in X \) there exists a path linking \( x \) to \( y \), namely a list of vertices \( \gamma = \{z_1, z_2, \ldots, z_n\} \subseteq X \) such that:
\vspace{-0.35em}
\[
z_1 = x, \hspace{1.4em} z_n = y \hspace{1.4em} \text{and} \hspace{1.4em} z_i \sim z_{i+1} \hspace{1.2em} \text{for} \hspace{0.5em} i = 1, \ldots, n-1.
\]
\end{enumerate}
\end{definition}

\noindent Throughout the following, we shall always suppose that the weighted graph is both connected and locally finite.

\begin{definition}\label{definition_weighted_degree}
Let \( (X, \omega, \mu) \) be a weighted graph. Then we define, for any vertex \( x \in X \), the degree of \( x \) as
\[
\deg(x) := \sum_{y \in X} \omega(x,y)
\]
\noindent and the weighted degree of \( x \) as
\[
\operatorname{Deg}(x) := \frac{\deg(x)}{\mu(x)} = \frac{1}{\mu(x)} \sum_{y \in X} \omega(x,y).
\]
\end{definition}

\noindent Notice that condition c) in \cref{definition_weighted_graph} ensures that every vertex in a weighted graph has finite degree.


\subsection{Difference and Laplace operators}\label{subsection_difference_Laplace_operators}

\noindent We first introduce the space of all real-valued functions on the vertex set:
\[
C(X) := \{ f : X \to \mathbb{R} \}.
\]

\begin{definition}\label{definition_difference_operators}
Let \( (X, \omega, \mu) \) be a weighted graph. Then:
\begin{enumerate}
\renewcommand{\labelenumi}{\alph{enumi})}
    \item given two vertices \( x,y \in X \), we define the corresponding difference operator as \( \nabla_{xy} \), acting in the following way for any \( f \in C(X) \):
    \[
    \nabla_{xy} f := f(y) - f(x);
    \]

    \item the (weighted) Laplacian acts as \( \Delta: \mathcal{D}_{\Delta}(X) \to C(X) \), where
    \[
    \mathcal{D}_{\Delta}(X) := \left\{ f \in C(X) : \sum_{y \in X} \omega(x,y) |f(y)| < +\infty, \text{ } \forall x \in X \right\},
    \]
    \noindent and is defined, for all \( x \in X \), as:
    \[
    \begin{aligned}
    \Delta f(x) :=& \frac{1}{\mu(x)} \sum_{y \in X} \omega(x,y) \left[ f(y) - f(x) \right] \\
    =& \frac{1}{\mu(x)} \sum_{y \in X} \omega(x,y) \left[ \nabla_{xy} f \right].
    \end{aligned}
    \]
\end{enumerate}
\end{definition}

\noindent Notice that, since by assumption \( \mu > 0 \) on every vertex, then the Laplacian of a function belonging to \( \mathcal{D}_{\Delta}(X) \) is defined over the whole vertex set \( X \). Moreover, it is trivial to verify that \( \Delta \) is a linear operator.

\begin{remark}\label{remark_locally_finite_laplacian}
It is straightforward to verify that, for any locally finite weighted graph \( (X, \omega, \mu) \), the following identity holds:
\[
C(X) = \mathcal{D}_{\Delta}(X),
\]
\noindent so that the Laplacian is well defined for any function \( f \in C(X) \).
\end{remark}


\subsection{The combinatorial distance}\label{subsection_combinatorial_distance}

\noindent Throughout the following, we shall adopt the notation \( \mathbb{N}_0 \) to denote the set \( \mathbb{N} \cup \{ 0 \} \).

\begin{definition}\label{definition_combinatorial_distance}
Let \( (X,\omega,\mu) \) be a connected weighted graph. We define the combinatorial graph distance \( \rho : X \times X \to \mathbb{N}_0 \) as follows: for any pair of vertices \( x, y \in X \), \( \rho(x,y) \) denotes the minimal number of edges in a path connecting \( x \) to \( y \). In other words, we set:
\[
\rho(x,y) := \inf \left\{ n : \gamma = \{x_k\}_{k=0}^n \text{ is a path between } x \text{ and } y \right\}, \hspace{1.8em} \text{for all } x,y \in X.
\]
\noindent Furthermore, given a nonempty finite subset \( \Omega \subset X \), we define the distance from any vertex of the graph to \( \Omega \) as
\[
\rho(x,\Omega) := \min_{y \in \Omega} \hspace{0.1em} \rho(x,y) \hspace{1.8em} \text{for all } x \in X.
\]
\noindent Within this framework, for any \( r \in \mathbb{N}_0 \), we refer to the shell of radius \( r \) centered at \( \Omega \) as
\[
S_r(\Omega) := \left\{ x \in X : \rho(x, \Omega) = r \right\}.
\]
\end{definition}

\noindent We point out a slight abuse of notation in the preceding definition: for a fixed vertex \( x \in X \), we write \( \rho(x,\cdot) \) both for the distance from \( x \) to another vertex of the graph and for the distance from \( x \) to the finite set \( \Omega \subset X \). \noindent The meaning will always be clear from the context.

\medskip
\noindent We also observe that the combinatorial distance is well defined on \( X \times X \), since the weighted graph \( (X,\omega,\mu) \) is assumed to be connected. Moreover, it is straightforward to verify that \( \rho \) actually defines a metric on \( X \).

\medskip
\noindent For notational convenience, we shall set
\begin{equation}\label{definition_r_rho}
r(x) := \rho(x,\Omega) \hspace{1.8em} \text{for all } x \in X.
\end{equation}


\subsection{Functional spaces}\label{subsection_functional_spaces}

\noindent For any \( p \in [1,+\infty] \), we define the weighted sequence spaces as
\[
\begin{split}
\ell^p(X, \mu) := & \left\{ f \in C(X) : \sum_{x \in X} |f(x)|^p \, \mu(x) < +\infty \right\}, \quad \text{whenever } p \in [1,+\infty), \\
\ell^{\infty}(X, \mu) \equiv & \, \, \ell^{\infty}(X) := \left\{ f \in C(X) : \sup_{x \in X} |f(x)| < +\infty \right\},
\end{split}
\]
\noindent and equip them with the following norms:
\[
\begin{split}
&\|f\|_p := \left[ \sum_{x \in X} |f(x)|^p \, \mu(x) \right]^{1/p} \qquad \text{for all } p \in [1,+\infty), \\
&\|f\|_{\infty} := \sup_{x \in X} |f(x)|.
\end{split}
\]


\subsection{Homogeneous model trees}\label{subsection_model_trees}

\begin{definition}\label{definition_model_tree}
Let \( b \in \mathbb{N} \) and consider a connected weighted graph \( (\mathbb{T}_b,\omega_0,\mu_1) \), together with a reference vertex \( x_0 \in \mathbb{T}_b \). Let us define the shells of radius \( r \in \mathbb{N}_0 \) as
\[
S_r(x_0) := \left\{ x \in \mathbb{T}_b : \rho(x,x_0) = r \right\},
\]
\noindent where \( \rho \) denotes the combinatorial distance, introduced in \emph{\cref{definition_combinatorial_distance}}. Assume that the following holds:
\begin{enumerate}
\renewcommand{\labelenumi}{\alph{enumi})}
    \item the edge weight \( \omega_0 : \mathbb{T}_b \times \mathbb{T}_b \to \{0,1\} \) is given by
    \begin{equation}\label{definition_omega_0}
    \omega_0(x,y) :=
    \begin{cases}
    1 & \text{ if } x \sim y \\
    0 & \text{ otherwise;}
    \end{cases}
    \end{equation}
    \item there are no edges connecting vertices within the same shell, i.e., for any \( r \in \mathbb{N}_0 \) we have:
    \[
    \omega_0(x,y) = 0 \hspace{1.8em} \text{for all } (x,y) \in S_r(x_0) \times S_r(x_0);
    \]
    \item \( \mu_1(x) \equiv 1 \) for every \( x \in \mathbb{T}_b \);
    \item it holds \( \deg(x_0) = b \);
    \item for all \( x \in \mathbb{T}_b \setminus \left\{ x_0 \right\} \) we have:
    \[
    \begin{aligned}
    & \left| \{ y \in \mathbb{T}_b : y \sim x, \rho(y,x_0) = \rho(x,x_0) - 1 \} \right| = 1, \\
    & \left| \{ y \in \mathbb{T}_b : y \sim x, \rho(y,x_0) = \rho(x,x_0) + 1 \} \right| = b.
    \end{aligned}
    \]
\end{enumerate}
\noindent Then \( (\mathbb{T}, \omega_0, \mu_1) \) is said to be a homogeneous model tree with root \( x_0 \) and branching \( b \).
\end{definition}

\noindent With respect to \cref{definition_combinatorial_distance}, we remark that a slight abuse of notation has been adopted in the definition of the shells: for all \( r \in \mathbb{N}_0 \), we write \( S_r(x_0) \) instead of \( S_r(\{x_0\}) \).

\medskip
\noindent For notational convenience, we shall adapt, in the present case where \( \Omega = \{x_0\} \), the convention introduced in \eqref{definition_r_rho}, namely:
\[
r(x) := \rho(x,x_0), \hspace{1.8em} \text{for all } x \in \mathbb{T}_b.
\]

\begin{remark}\label{remark_model_trees_locally_finite}
\noindent We observe that, since in a homogeneous model tree the edge weight is given by \eqref{definition_omega_0}, then for every \(x \in \mathbb{T}_b\) we can write:
\[
\deg(x) = \sum_{y \in \mathbb{T}_b} \omega_0(x,y) = \sum_{\substack{y \in \mathbb{T}_b \\ y \sim x}} \omega_0(x,y)
= \sum_{\substack{y \in \mathbb{T}_b \\ y \sim x}} 1 = \left| \{ y \in \mathbb{T}_b : y \sim x \} \right|.
\]
\noindent In other words, the degree of a vertex coincides with the number of its adjacent vertices. In addition, by combining properties b), d), and e) in \emph{\cref{definition_model_tree}}, it is straightforward to verify that for all \( x \in \mathbb{T}_b \) it holds:
\[
\deg(x) =
\begin{cases}
b & \text{ if } \hspace{0.05em} x = x_0 \\
b + 1 & \text{ if } \hspace{0.05em} x \in \mathbb{T}_b \setminus \{ x_0 \}.
\end{cases}
\]
\noindent In conclusion, every node has finitely many neighbors, meaning that any homogeneous model tree is locally finite.
\end{remark}


\subsection{The integer lattice}\label{subsection_integer_lattice}

\begin{definition}\label{definition_lattice}
Let \( N \in \mathbb{N} \), and consider a connected weighted graph fulfilling the following properties:
\begin{enumerate}
\renewcommand{\labelenumi}{\alph{enumi})}
    \item the vertex set coincides with \( \mathbb{Z}^N \), that is, each vertex \( x \) is composed of \( N \) integer components:
    \[
    x = (x_1, x_2, \dots, x_N), \hspace{1.8em} \text{with } x_i \in \mathbb{Z} \text{ for each } i \in \left\{ 1, 2, \dots, N \right\};
    \]
    \item for each couple of vertices \( x,y \in \mathbb{Z}^N \), we have \( x \sim y \) if and only if there exists \( k \in \left\{ 1, 2, \dots, N \right\} \) such that
    \[
    y_k = x_k \pm 1 \hspace{1.2em} \text{and} \hspace{1.2em} y_i = x_i  \hspace{0.7em} \text{for each } i \in \left\{ 1, 2, \dots, N \right\} \setminus \{ k \};
    \]
    \item the edge weight \( \omega_0 \) satisfies the definition given in \eqref{definition_omega_0}, namely \( \omega_0 : \mathbb{Z}^N \times \mathbb{Z}^N \to \{0,1\} \) and
    \[
    \omega_0(x,y) :=
    \begin{cases}
    1 & \text{ if } x \sim y \\
    0 & \text{ otherwise;}
    \end{cases}
    \]
    \item the node measure is defined as
    \[
    \mu(x) \equiv 2N \hspace{1.8em} \text{for every } x \in \mathbb{Z}^N.
    \]
\end{enumerate}
\noindent Then the corresponding weighted graph \( (\mathbb{Z}^N,\omega_0,\mu) \) is referred to as the  \( N \)--dimensional integer lattice.
\end{definition}

\medskip
\noindent We observe that, since in the lattice the edge weight is given by \eqref{definition_omega_0}, then we can argue as in \cref{remark_model_trees_locally_finite}, obtaining the following identity, valid for every \( x \in \mathbb{Z}^N \):
\[
\deg(x) = \sum_{y \in \mathbb{Z}^N} \omega_0(x,y) = \left| \{ y \in \mathbb{Z}^N : y \sim x \} \right|.
\]
\noindent Therefore, the degree of a vertex coincides with the number of its adjacent vertices. In addition, it can be easily verified that every node counts exactly \( 2N \) neighbors; we then infer that the integer lattice is locally finite and that the following identities hold:
\[
\mu(x) = \deg(x) = \sum_{y \in \mathbb{Z}^N} \omega_0(x,y) = \left| \{ y \in \mathbb{Z}^N : y \sim x \} \right| = 2N, \hspace{2.0em} \text{for all } x \in \mathbb{Z}^N.
\]
\noindent Finally, in the context of the integer lattice, it is not difficult to verify the validity of the following expression for the combinatorial distance \( \rho \):
\begin{equation}\label{definition_rho_lattice}
\rho(x,y) = \sum_{i=1}^N |x_i - y_i| \hspace{1.8em} \text{for all } x,y \in \mathbb{Z}^N.
\end{equation}
\noindent Despite this convenient form, for analytical purposes it turns out to be preferable to endow the lattice with the Euclidean distance.

\begin{definition}\label{definition_euclidean_distance_lattice}
\noindent We equip the integer lattice \( (\mathbb{Z}^N,\omega_0,\mu) \) with the Euclidean distance \( d : \mathbb{Z}^N \times \mathbb{Z}^N \to [0,+\infty) \), defined as follows:
\[
d(x,y) := \left[ \sum_{i=1}^N |x_i - y_i|^2 \right]^{\frac{1}{2}} \hspace{1.8em} \text{for all } x,y \in \mathbb{Z}^N,
\]
\noindent for which we will also use the notation \( \left| x-y \right| \).

\noindent Moreover, given a reference vertex \( x_0 \in \mathbb{Z}^N \) and \( r > 0 \), we define the ball of radius \( r \) centred at \( x_0 \) as
\[
B_r(x_0) := \{ x \in \mathbb{Z}^N : d(x,x_0) < r \},
\]
\noindent having boundary
\[
\partial B_r(x_0) = \{ x \in \mathbb{Z}^N : d(x,x_0) = r \}.
\]
\end{definition}

\medskip
\noindent We shall also recall the equivalence, in the context of the integer lattice, between the combinatorial distance \( \rho \) and the Euclidean distance \( d \). Indeed, it holds:
\begin{equation}\label{equivalence_rho_d__lattice}
d(x,y) \leq \rho(x,y) \leq \sqrt{N} \, d(x,y) \hspace{2.5em} \text{for all } x,y \in \mathbb{Z}^N.
\end{equation}








\section{Statement of the results}\label{section_main_results}
\subsection{Main results}
\noindent As already specified in \cref{subsection_graph_setting}, we shall always assume the weighted graph \( (X,\omega,\mu) \) to be both connected and locally finite.

\medskip
\noindent In addition, we suppose that the initial datum \( u_0 \) of problem \eqref{problem_HR_G} is nonnegative and bounded, namely:
\begin{equation}\label{eq:HP_ID_G}
u_0 : X \to [0,+\infty) \hspace{1.6em} \text{and} \hspace{1.6em} u_0 \in \ell^\infty(X).
\end{equation}

We establish the following blow-up and global existence results on the lattice $\mathbb{Z}^N$.

\begin{theorem}\label{theorem_blow_up_lattice_sub_critical_p}
Let \( 1 < p \leq 1 + \frac{2}{N} \). Let $u$ be a solution of problem \eqref{problem_HR_G} with $X=\mathbb Z^N$ and $u_0\not\equiv 0$. Then $u$ blows up in finite time.
\end{theorem}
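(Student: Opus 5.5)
The plan is to use Kaplan's method, which the introduction says \cite{PZ1} supplies in the form of a nonexistence result for large data. We apply it to the translated solution $u(\cdot+t_0)$, taking $u(\cdot,t_0)$ as the new initial datum. The result from \cite{PZ1} presumably reads as follows. Let $\varphi_R\ge 0$ be a test function on $\mathbb Z^N$ with $\sum_x\varphi_R\mu=1$ and $\Delta\varphi_R\ge-\lambda_R\varphi_R$; for $\mathbb Z^N$ one can take a normalized principal Dirichlet eigenfunction of a ball $B_R$, or more conveniently the heat kernel itself. If
\[
\sum_x u_0(x)\varphi_R(x)\mu(x)>\lambda_R^{1/(p-1)}\quad(\text{up to a constant}),
\]
then no global solution exists. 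The reason is that $F(t)=\sum u\varphi_R\mu$ satisfies $F'\ge-\lambda_R F+F^p$ by Jensen's inequality, and this differential inequality blows up. On $\mathbb Z^N$ the relevant scaling is $\lambda_R\asymp R^{-2}$.

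The first step is the reduction. Suppose by contradiction that $u$ is global; we take $u$ classical (equivalently mild), by the equivalence results of Section~\ref{ardim}. By the comparison principle, $u(t)\ge P_tu_0$, where $P_t$ is the heat semigroup. Since the graph is connected and $u_0\not\equiv0$, we may assume $u_0\ge c\,\mathds 1_{\{x_1\}}$, so that $u(\cdot,t)\ge c\,\mu(x_1)\,p_t(\cdot,x_1)$.

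The second step is a heat kernel lower bound. The Gaussian-type lower bound for $p_t$ on $\mathbb Z^N$ from \cite{book_Barlow} gives $p_t(x,x_1)\ge c\,t^{-N/2}$ for $|x-x_1|\le\sqrt t$ and $t\ge1$. Apply the Kaplan criterion at time $t_0$ with $R=\sqrt{t_0}$ and $\varphi_R$ supported or concentrated in $B_R(x_1)$. This gives
\[
\sum u(\cdot,t_0)\varphi_R\mu\ge c\,t_0^{-N/2}.
\]
Blow-up therefore follows when $c\,t_0^{-N/2}>C\,t_0^{-1/(p-1)}$. For $p<1+2/N$ we have $N/2<1/(p-1)$, so this holds for $t_0$ large. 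That is a contradiction.

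The critical case $p=1+2/N$ is the main obstacle, because the exponents then coincide and only the constants matter. I would first try to gain a logarithm or a growing factor from the nonlinearity rather than from the linear flow. Using the mild formulation,
\[
u(t)\ge P_tu_0+\int_0^tP_{t-s}(P_su_0)^p\,ds,
\]
and inserting the heat kernel bounds together with the semigroup property gives
\[
\int_0^tP_{t-s}(P_su_0)^p\,ds\gtrsim p_{t}(\cdot,x_1)\int_1^t s^{-N(p-1)/2}\,ds= p_t\,\log t
\]
on $B_{\sqrt t}$, up to lower-order terms. The inequality $(P_su_0)^p\ge c\,s^{-N(p-1)/2}P_s u_0$ on the parabolic region is justified through the lower bound, and the estimate $P_{t-s}(p_s\mathds 1_{B_{\sqrt s}})\gtrsim p_t$ on $B_{\sqrt t}$ needs checking. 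Hence $t^{N/2}\sum u(t)\varphi_R\mu\gtrsim\log t\to\infty$, which beats the constant $C$ and triggers Kaplan's criterion. Making these lattice heat kernel comparisons precise, uniformly for large times, is where the technical work lies.
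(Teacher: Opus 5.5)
Your argument is correct. It uses the same ingredients as the paper, but organizes them differently:
\begin{itemize}
\item the Kaplan-type criterion of \cite{PZ1}, which in the paper is the Gaussian-weighted condition of \cref{lemma_blow_up_lattice_1}, i.e.\ your normalized criterion with $\varphi\propto e^{-k|x|^2}$ and threshold $\asymp k^{1/(p-1)}$;
\item the Duhamel formula for global solutions (\cref{proposition_Duhamel_formula_HR_G});
\item the two-sided bounds of \cref{lemma_inequalities_HK_lattice}.
\end{itemize}

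\textbf{Subcritical case.} The paper needs neither a contradiction argument nor any heat kernel estimate. It applies the criterion directly at $t=0$ and lets $k\to0^+$. The left-hand side tends to $\sum_x u_0(x)>0$, while by the Jacobi identity for $\theta$ the threshold behaves like $k^{\frac{1}{p-1}-\frac N2}\to0$. Your translation to a large time $t_0$ works but is superfluous: even in your own framework, $\sum u_0\varphi_R\mu\gtrsim R^{-N}\gg R^{-2/(p-1)}$ already at $t=0$.

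\textbf{Critical case.} The paper works with the total mass. Summing the Duhamel lower bound over $x$ and counting lattice points in a parabolic region gives $\sum_x u(x,t)\ge C\log\frac{t+t^*-1}{t^*+1}\to\infty$. It then uses the fact that for $p=1+\frac2N$ the threshold converges, as $k\to0^+$, to the explicit constant $(2\pi N)^{N/2}$.

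You instead prove the pointwise bound $u(\cdot,t)\gtrsim t^{-N/2}\log t$ on $B_{\sqrt t}(x_1)$ and apply the criterion at the finite scale $k\asymp 1/t$. There the linear part and the threshold both scale like $t^{-N/2}$, so the logarithm wins without knowing the limiting constant. The price is a pointwise comparison for the Duhamel term.

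That comparison holds once you restrict the time integral to $s\in[s_0,t/2]$. For $s$ near $t$ the Gaussian lower bound is unavailable, since it requires $t-s\ge\max\{1,\rho\}$, so your integral over $[1,t]$ is not justified as written. On $[s_0,t/2]$ the estimate goes through:
\begin{itemize}
\item for $x\in B_{\sqrt t}(x_1)$ and $y\in B_{\sqrt s}(x_1)$ one has $|x-y|^2\le 4t\le 8(t-s)$, hence $p_{t-s}(x,y)\ge c\,t^{-N/2}$ for $t$ large;
\item $\sum_{y\in B_{\sqrt s}(x_1)}p_s(y,x_1)\mu(y)\ge c$;
\item $\int_{s_0}^{t/2}ds/s$ still produces $\log t$.
\end{itemize}

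Finally, the paper's weight is centred at the origin rather than at $x_1$. This is harmless by the translation invariance of $\mathbb Z^N$.
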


\begin{theorem}\label{theorem_global_existence_lattice}
If\hspace{0.1em} \( p > 1 + \frac{2}{N} \), then problem \eqref{problem_HR_G}, posed on the integer lattice, admits a global classical solution, provided that the initial datum \( u_0 \in C(\mathbb{Z}^N) \) satisfies the following estimates:
\begin{equation}\label{bound_ID_global_existence_lattice}
0 \leq u_0(x) \leq \overline{u}(x,0) \hspace{2.5em} \text{for all } \hspace{0.1em} x \in \mathbb{Z}^N,
\end{equation}
\noindent where \( \overline{u} \) is a suitably constructed global classical supersolution of the equation
\begin{equation}\label{e30}
u_t - \Delta u = u^p \quad \text{ in } \text{ } \mathbb{Z}^N\times (0,\infty).
\end{equation}

\noindent In particular, the solution is nontrivial if \( u_0 \not \equiv 0 \).
\end{theorem}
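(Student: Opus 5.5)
The plan is to build an explicit global supersolution from the heat semigroup on $\mathbb Z^N$. Then we invoke the sub--supersolution existence result announced in the introduction and proved in Section \ref{ardim}: if $0\le \underline u\le \overline u$ are a classical sub- and supersolution, there is a classical solution $u$ with $\underline u\le u\le \overline u$. We take $\underline u\equiv 0$, which is a subsolution for any $u_0\ge 0$. It then suffices to construct $\overline u$ on $\mathbb Z^N\times[0,\infty)$ that is bounded on each finite time interval, nonnegative, continuous in time and $C^1$ in $t$ for $t>0$, and satisfies $\overline u_t-\Delta\overline u\ge \overline u^p$ together with $u_0\le\overline u(\cdot,0)$. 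The classical solution obtained between $0$ and $\overline u$ is then global. Nontriviality when $u_0\not\equiv0$ is immediate from $u(\cdot,0)=u_0$. One can add a strong-positivity remark via comparison with the heat flow: $u\ge P_tu_0>0$ for $t>0$, since $u$ is a supersolution of the heat equation.

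For the construction, in the spirit of Fujita--Weissler, set
\[
\overline u(x,t)=\varphi(t)\,\big(P_t v\big)(x),
\]
where $P_t$ is the heat semigroup on $\mathbb Z^N$ (Section \ref{subsection_H_E_graph_setting}), $v\ge0$ is a fixed bounded summable function (say $v=\varepsilon\,\delta_{0}$, or a small multiple of the heat kernel at time $1$), and $\varphi$ is a positive increasing function solving an ODE. Since $\partial_t P_tv=\Delta P_tv$, we get
\[
\overline u_t-\Delta\overline u=\varphi'(t)\,P_tv,
\]
so the supersolution inequality reduces to $\varphi'\ge\varphi^p\,(P_tv)^{p-1}$. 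This in turn follows from $\varphi'\ge \varphi^p\|P_tv\|_\infty^{p-1}$. We choose $\varphi$ to solve this with equality and $\varphi(0)=1$:
\[
\varphi(t)^{1-p}=1-(p-1)\int_0^t\|P_sv\|_\infty^{p-1}\,ds .
\]
By the heat kernel estimates on $\mathbb Z^N$ (\cite{book_Barlow}), $\|P_sv\|_\infty\le \min\{\|v\|_\infty,\ C s^{-N/2}\|v\|_1\}$. Since $p>1+\frac2N$ means $\frac N2(p-1)>1$, the integral $\int_0^\infty\|P_sv\|_\infty^{p-1}ds$ is finite. It is bounded by a constant times $\|v\|_\infty^{p-1}+\|v\|_1^{p-1}$, which can be made $<\frac1{p-1}$ by scaling $v$ small. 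Then $\varphi$ stays bounded, say by $2^{1/(p-1)}$ after adjusting the smallness, for all $t\ge0$, and $\overline u$ is a global classical supersolution. Condition \eqref{bound_ID_global_existence_lattice} reads $u_0\le \overline u(\cdot,0)=v$. Taking $v$ to be a small positive multiple of, e.g., the heat kernel $p_1(x,0)$ gives a strictly positive, summable admissible threshold profile.

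The main obstacle is the uniform heat kernel decay $\|P_s v\|_\infty\lesssim s^{-N/2}\|v\|_1$ on $\mathbb Z^N$ for large $s$, together with boundedness for small $s$. For this I would rely on the on-diagonal bound $p_s(x,y)\le C\,(1\vee s)^{-N/2}$ from Section \ref{subsection_H_E_graph_setting}, accounting for the normalization $\mu\equiv2N$. A secondary technical point is checking that $\overline u$ meets the regularity and boundedness hypotheses required by the existence theorem between sub- and supersolutions. Since $P_tv$ is bounded and smooth in $t$ and $\varphi$ is smooth and bounded, this should be routine.
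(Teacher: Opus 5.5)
Your construction is essentially the paper's. You use a Fujita-type barrier $\overline u=\varphi(t)\,P_tv$; the paper writes $h(t)\,v(\cdot,t+t_0)$ with $v(\cdot,t)=P_tv_0$. The factor $\varphi$ solves the Bernoulli ODE driven by the decay $K_t\le c_3t^{-N/2}$ of Lemma~\ref{lemma_inequalities_HK_lattice}, you take $\underline u\equiv 0$, and you conclude with the monotone-iteration result of Section~\ref{ardim}. The only difference is how you keep $\varphi$ bounded. You make $\int_0^\infty\|P_sv\|_\infty^{p-1}\,ds$ small by scaling $v$, using $\|v\|_\infty$ near $s=0$. The paper instead shifts time by a large $t_0$, so that the constant $D$ in the explicit formula \eqref{analytical_expression_h_existence_lattice} is below $1$. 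Both work.

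There is, however, one step that does not go through as you state it. The result proved in Section~\ref{ardim} (Propositions~\ref{proposition_existence_between_barriers_mild} and~\ref{proposition_summarize_global_existence}) requires a \emph{mild} subsolution and a \emph{mild} supersolution. The paper proves ``classical implies mild'' only for solutions (Proposition~\ref{proposition_Duhamel_formula_HR_G}), never for supersolutions. So checking $\overline u_t-\Delta\overline u\ge\overline u^p$ plus regularity is not enough. You must also show
\[
\overline u(\cdot,t)\ \ge\ P_tu_0+\int_0^tP_{t-s}\big[\overline u(\cdot,s)^p\big]\,ds\qquad\text{for all } t>0 .
\]
With your $\varphi$ this is short, and it is exactly what the paper does in Lemmas~\ref{lemma_mild_solution_b} and~\ref{lemma_mild_supersol_1}:
\begin{enumerate}
\item By the semigroup law, $P_{t-s}\overline u(\cdot,s)=\varphi(s)P_tv$.
\item Integrating $\varphi'$ over $(0,t)$ then gives $\overline u(\cdot,t)=P_t\overline u(\cdot,0)+\int_0^tP_{t-s}\big(b(s)\overline u(\cdot,s)\big)\,ds$ with $b=\varphi'/\varphi$.
\item Your ODE says precisely $b(s)=\|\overline u(\cdot,s)\|_\infty^{p-1}$, hence $b(s)\overline u(\cdot,s)\ge\overline u(\cdot,s)^p$.
\item Positivity of $P_{t-s}$, together with $u_0\le v=\overline u(\cdot,0)$, yields the inequality.
\end{enumerate}
Alternatively, you could prove a minimum principle on $\mathbb Z^N$ for supersolutions of the heat equation that are bounded on finite time intervals, but you would have to supply that result yourself. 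Your optional remark $u\ge P_tu_0$ should likewise be read off the mild formula, not from a comparison principle the paper does not contain.
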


Let $\sigma(-\Delta)$ denote the $\ell^2$-spectrum of the operator $-\Delta$.
It is well known that $\sigma(-\Delta) \subseteq [0,\infty)$.
We further define
\[
\lambda_1(X) := \inf \sigma(-\Delta),
\]
that is, the bottom of the spectrum.

On graphs for which $\lambda_1>0$, we obtain the following global existence result. In this setting, global existence holds for every $p>1.$

\begin{theorem}\label{theorem_global_existence_G}
Let \( (X,\omega,\mu) \) be a connected and locally finite weighted graph, and assume that the weighted degree is bounded, namely
\begin{equation}\label{bounded_weighted_degree_2}
D := \sup_{x\in X} \, \operatorname{Deg}(x) = \sup_{x\in X} \, \frac{1}{\mu(x)} \, \sum_{y\in X} \, \omega(x,y) < +\infty.
\end{equation}
Suppose also that \( \lambda_1(X) > 0 \), and
\[
\mu_{min} := \inf_{x \in X} \mu(x) > 0.
\]
\noindent Then, for all \( p > 1 \), problem \eqref{problem_HR_G} admits a global classical solution, provided that the initial datum \( u_0 \in C(X) \) satisfies the following estimate:
\begin{equation}\label{bound_ID_global_existence_G}
0 \leq u_0(x) \leq \overline{u}(x,0) \hspace{2.5em} \text{for all } \hspace{0.1em} x \in X,
\end{equation}
where \( \overline{u} \) is a suitably constructed global classical supersolution of the equation
\begin{equation}\label{e31}
u_t - \Delta u = u^p \quad \text{ in } \text{ } X\times(0,\infty).
\end{equation}

\end{theorem}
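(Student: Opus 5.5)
The plan is to construct an explicit global supersolution that is constant in space and decreasing in time, and then apply the sub/supersolution existence result announced in the introduction (existence of a classical solution between $0$ and a supersolution $\bar u$, to be proved in Section \ref{ardim}). The spectral gap will supply a positive function $\phi$ with $-\Delta\phi\ge\lambda\phi$ for some $0<\lambda\le\lambda_1$. A natural candidate is $\bar u(x,t)=\varepsilon e^{-\lambda t/2}\phi(x)$, or more robustly $\bar u(x,t)=\varepsilon e^{-\alpha t}\phi(x)$ with $0<\alpha<\lambda$ and $\phi$ bounded between positive constants.

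\textbf{Step 1: a positive bounded $\lambda$-superharmonic function.} Since $\lambda_1(X)>0$, the Allegretto--Piepenbrink type characterization on graphs gives, for every $0<\lambda\le\lambda_1$, a positive $\phi\in C(X)$ with $\Delta\phi+\lambda\phi\le 0$. Such a $\phi$ can be obtained by exhausting $X$ with finite sets $\Omega_n$, taking resolvents or Dirichlet eigenfunctions, normalizing at a fixed vertex, and passing to the limit using local finiteness and a Harnack inequality. The difficulty is that this $\phi$ need not be bounded above or below, and both bounds are needed: $\bar u(\cdot,0)$ must dominate a nontrivial bounded datum, and the nonlinearity has to be controlled.

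\textbf{Step 2: avoid the boundedness issue with a constant profile.} Instead of the eigenfunction, I would use the semigroup decay $\|e^{t\Delta}\|_{2\to2}\le e^{-\lambda_1 t}$. With $\mu_{min}>0$ this gives $\ell^\infty$ decay for $\ell^2$ data, since $\|f\|_\infty\le\mu_{min}^{-1/2}\|f\|_2$. Then I would set $\bar u(x,t)=\varepsilon\, v(x,t)$ with $v=e^{t\Delta}\psi$ for suitable $\psi$. Here $D<\infty$ makes $\Delta$ bounded on $\ell^\infty$ and $\ell^2$, so the semigroup is classical, and $\ell^\infty\to\ell^\infty$ contractivity holds. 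The requirement becomes $\varepsilon v_t-\varepsilon\Delta v=0\ge \varepsilon^p v^p$, which fails, so a time factor is needed. Take $\bar u=\varepsilon h(t)\,v$ with $h'\ge \varepsilon^{p-1}h^p\|v(t)\|_\infty^{p-1}$. Solving the Bernoulli ODE gives
\[
h(t)^{1-p}=1-(p-1)\varepsilon^{p-1}\int_0^t\|v(s)\|_\infty^{p-1}\,ds .
\]
Since $\|v(s)\|_\infty\le C e^{-\lambda_1 s}\|\psi\|_2$, the integral is finite, and for small $\varepsilon$ we get $h\le 2$ globally. Then $\bar u_t-\Delta\bar u=\varepsilon h' v\ge \varepsilon^p h^p v^p=\bar u^p$ because $v^{p-1}\le\|v\|_\infty^{p-1}$. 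This is exactly Weissler's construction, transplanted to the graph.

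\textbf{Step 3: conclude.} With $\underline u\equiv0$, the sub/supersolution result gives a classical solution $0\le u\le\bar u$ on $X\times(0,T)$ for every $T$. Hence $u$ is global and bounded by $2\varepsilon\|v\|_\infty$. Any $u_0$ with $0\le u_0\le \bar u(\cdot,0)=\varepsilon\psi$ is admissible; for instance, $\psi\in\ell^2\cap\ell^\infty$ positive, such as $\psi=\mathds 1_{\{x_0\}}$ or a summable positive profile.

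The main obstacle I expect is verifying the hypotheses of the sub/supersolution existence theorem, which presumably requires boundedness on each finite time strip and regularity in $t$. These follow from $D<\infty$ (since $\Delta$ is bounded on $\ell^\infty$) and from the explicit form of $\bar u$. A secondary point is the $\ell^2\to\ell^\infty$ decay, which relies on $\mu_{min}>0$ together with the spectral bound $\|e^{t\Delta}\psi\|_2\le e^{-\lambda_1 t}\|\psi\|_2$ from the spectral theorem for the self-adjoint bounded operator $\Delta$.
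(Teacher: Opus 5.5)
Your construction is essentially the paper's proof: the paper also takes $\overline{u}=h(t)\,v(\cdot,t+t_0)$ with $v=P_t v_0$, uses the exponential $\ell^\infty$ decay coming from $\lambda_1>0$ and $\mu_{min}>0$ to make the Bernoulli equation for $h$ globally solvable with $h$ bounded, and concludes with $\underline{u}\equiv 0$ and Proposition~\ref{proposition_summarize_global_existence}. The differences are cosmetic: the paper uses $\ell^1$ data and the kernel bound $p(x,y,t)\le \mu_{min}^{-1}e^{-\lambda_1 t}$ of Lemma~\ref{lemma_Punzo_Sacco_global_existence} where you use $\ell^2$ data and the spectral bound (legitimate, since $D<\infty$ makes $\Delta$ bounded on $\ell^2(X,\mu)$ and $\ell^\infty(X)$), and it obtains smallness through a large shift $t_0$ where you use a small $\varepsilon$.

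One correction to your Step 3: that proposition requires $\overline{u}$ to be a \emph{mild} supersolution, not merely bounded and regular in $t$. So you must also check $\overline{u}(\cdot,t)\ge P_t u_0+\int_0^t P_{t-s}[\overline{u}(\cdot,s)^p]\,ds$. For your $\overline{u}=\varepsilon h(t)P_t\psi$ this is immediate, exactly as in Lemma~\ref{lemma_mild_supersol_1}: $P_{t-s}P_s=P_t$ gives $\overline{u}(\cdot,t)=P_t(\varepsilon\psi)+\int_0^t P_{t-s}[\varepsilon h'(s)\,P_s\psi]\,ds$, and then you combine $\varepsilon h'(s)P_s\psi\ge\overline{u}(\cdot,s)^p$, $u_0\le\varepsilon\psi$, and positivity of $P_t$.
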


As a special case of graphs with $\lambda_1>0$, we now consider homogeneous trees $(\mathbb{T}_b,\omega_0,\mu_1)$ with branching \( b \geq 2 \). Indeed, in this scenario we have (see, e.g., \cite{FR}):
\begin{equation}\label{e20}
\lambda_1(\mathbb{T}_b) = \left( \sqrt{b} - 1 \right)^2 > 0.
\end{equation}

From Theorem \ref{theorem_global_existence_G} we deduce the next result.

\begin{corollary}\label{theorem_homogeneous_model_trees_existence}
Let \( (\mathbb{T}_b,\omega_0,\mu_1) \) be a homogenous model tree with branching \( b \geq 2 \). 
Then, for all \( p > 1 \), problem \eqref{problem_HR_G} with $X=\mathbb{T}_b$ admits a global classical solution, provided that the initial datum \( u_0 \in C(\mathbb{T}_b) \) satisfies the following estimate:
\[
0 \leq u_0(x) \leq \overline{u}(x,0) \hspace{2.5em} \text{for all } \hspace{0.1em} x \in \mathbb{T}_b,
\]
where \( \overline{u} \) is a suitably constructed global classical supersolution of equation \eqref{e31} with $X=\mathbb{T}_b.$

\end{corollary}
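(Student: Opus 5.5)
The corollary is a direct specialization of Theorem~\ref{theorem_global_existence_G} to $X=\mathbb{T}_b$. The plan is to check, one by one, the hypotheses of that theorem for the homogeneous model tree $(\mathbb{T}_b,\omega_0,\mu_1)$ with $b\ge 2$. Then the existence of the global classical supersolution $\overline u$ and of the global classical solution for data below $\overline u(\cdot,0)$ follows verbatim.

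The checks are as follows.

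\begin{enumerate}
\item \emph{Connectedness and local finiteness.} Connectedness is part of \cref{definition_model_tree}. Local finiteness is \cref{remark_model_trees_locally_finite}.
\item \emph{Bounded weighted degree.} Since $\mu_1\equiv 1$, \cref{remark_model_trees_locally_finite} gives $\operatorname{Deg}(x)=\deg(x)\in\{b,b+1\}$. Hence \eqref{bounded_weighted_degree_2} holds with $D=b+1<+\infty$.
\item \emph{Lower bound on the measure.} Again from $\mu_1\equiv1$ we get $\mu_{min}=1>0$.
\item \emph{Spectral gap.} By \eqref{e20} (from \cite{FR}), $\lambda_1(\mathbb{T}_b)=(\sqrt b-1)^2$, which is strictly positive precisely because $b\ge 2$. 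For $b=1$ the tree is a half-line, where the argument would fail; this is why the hypothesis $b\ge2$ is needed.
\end{enumerate}

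With all hypotheses verified, Theorem~\ref{theorem_global_existence_G} applies for every $p>1$. It supplies the supersolution $\overline u$ of \eqref{e31} on $\mathbb{T}_b$, and hence a global classical solution whenever $0\le u_0\le \overline u(\cdot,0)$.

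There is no genuine obstacle. The only nontrivial input is the value of the bottom of the spectrum, and it is quoted from the literature. If one wanted to avoid citing \cite{FR}, the alternative would be to prove $\lambda_1>0$ directly. One would do this via a positive function $\phi$ with $-\Delta\phi\ge c\,\phi$, for instance $\phi(x)=b^{-r(x)/2}$, which gives $c=(\sqrt b-1)^2$ away from the root, where a small adjustment is needed. A ground-state argument (Allegretto--Piepenbrink type) then yields $\lambda_1\ge c$.
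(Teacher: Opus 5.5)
Your proposal is correct and matches the paper's proof. The paper likewise only checks connectedness, local finiteness, $\sup_{x}\operatorname{Deg}(x)=b+1$, $\mu_{min}=1>0$ and $\lambda_1(\mathbb{T}_b)=(\sqrt b-1)^2>0$ via \eqref{e20}, and then invokes Theorem~\ref{theorem_global_existence_G}. In your optional aside, no adjustment at the root is actually needed, since there $-\Delta\phi(x_0)=b-\sqrt b\ge(\sqrt b-1)^2\phi(x_0)$.
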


\subsection{Auxiliary results}
\subsubsection{Definition of classical and mild solutions}
\noindent We are now ready to introduce the notion of solution for problem \eqref{problem_HR_G}. In particular, we restrict our attention to nonnegative \emph{classical} solutions that remain bounded for all times prior to \( T \).

\medskip
Let $S_T:=X\times (0, T).$

\begin{definition}\label{definition_sub_super_sols_G}
Let \( u_0 \) satisfy \eqref{eq:HP_ID_G} and consider a function \( u : X \times [0,T) \to \mathbb{R} \) such that
\begin{equation}\label{reg_sol_1_G}
  u(x,\cdot) \in C^1((0,T)) \cap C^0([0,T)) \hspace{1.6em} \text{for all } x \in X,
\end{equation}

\vspace{-1.5em}

\begin{equation}\label{reg_sol_2_G}
  u \in L^\infty(X \times [0,T']) \quad \text{ for all } \text{ } T' \in (0,T).
\end{equation}
\noindent Then \( u \) is said to be a solution of problem \eqref{problem_HR_G}, with initial condition \( u_0 \), if \( u \) satisfies \eqref{problem_HR_G} pointwise, and \( u \geq 0 \) in \( S_T \). \\
A function \( \overline{u}: X \times [0,T) \to \mathbb{R} \) is said to be a supersolution of problem \eqref{problem_HR_G}, associated with the initial datum \( u_0 \), if it satisfies \eqref{reg_sol_1_G} and \eqref{reg_sol_2_G}, and if it solves pointwise the following inequalities:
\[
  \begin{cases}
  \overline{u}_t - \Delta \overline{u} \geq \overline{u}^p & \text{ in } \hspace{0.05em} S_T \\
  \overline{u} \geq u_0 & \text{ in } \hspace{0.05em} X \times \{0\} \\
  \overline{u} \geq 0 & \text{ in } \hspace{0.05em} S_T.
  \end{cases}
\]

\noindent Analogously, a function \( \underline{u}: X \times [0,T) \to \mathbb{R} \) is said to be a subsolution of problem \eqref{problem_HR_G}, with initial condition \( u_0 \), if it satisfies \eqref{reg_sol_1_G} and \eqref{reg_sol_2_G}, and if it solves pointwise the following inequalities:
\[
  \begin{cases}
  \underline{u}_t - \Delta \underline{u} \leq \underline{u}^p & \text{ in } \hspace{0.05em} S_T \\
  \underline{u} \leq u_0 & \text{ in } \hspace{0.05em} X \times \{0\} \\
  \underline{u} \geq 0 & \text{ in } \hspace{0.05em} S_T.
  \end{cases}
\]
\end{definition}

\noindent From the previous definition, it is immediate to observe that a function is a solution to problem \eqref{problem_HR_G} if and only if it is both a subsolution and a supersolution.

\medskip
\noindent In addition, for future reference, we clarify the notion of supersolution to the semilinear equation under consideration. In particular, given \( T \in (0,+\infty] \), we say that a function \( \overline{u}: X \times [0,T) \to \mathbb{R} \) is a \emph{supersolution of the equation} \( u_t - \Delta u = u^p \) if it satisfies all the requirements of a supersolution to problem \eqref{problem_HR_G}, according to \cref{definition_sub_super_sols_G}, except for the inequality involving the initial datum \( u_0 \). Such supersolution is said to be \emph{global} if the corresponding maximal existence time is \( T = +\infty \).

\begin{remark}\label{remark_time_translation_G}
\noindent Consider a solution \( u \) to problem \eqref{problem_HR_G}, defined in \( X \times [0,T) \). Then, after fixing an arbitrary time \( t^* \in (0,T) \) and setting \( u_0^* := u(\cdot, t^*) \), it is immediate to verify that, thanks to the regularity condition \eqref{reg_sol_2_G}, \( u_0^* \) satisfies \eqref{eq:HP_ID_G}, hence being a valid initial datum for the following problem, solved by \( u \):
\[
\begin{cases}
u_t - \Delta u = u^p & \emph{\text{ in }} X \times (t^*,T) \\
u = u_0^* & \emph{\text{ in }} X \times \{ t^* \} \\
u \geq 0 & \emph{\text{ in }} X \times (t^*,T).
\end{cases}
\]
\end{remark}

\noindent We also highlight that, by arguing as in the previous remark, \eqref{reg_sol_2_G} allows us to deduce that any solution \( u \) of problem \eqref{problem_HR_G} satisfies
\[
u(\cdot,t) \in \ell^\infty(X) \hspace{1.6em} \text{for each } t \in (0,T),
\]

\noindent and two mutually exclusive possibilities arise: either the maximal existence time is \( T = +\infty \), in which case the function \( u(\cdot,t) \) remains bounded in \( X \) for all times \( t > 0 \), or \( T \in (0,+\infty) \), and \( \|u(\cdot,t)\|_\infty \) exhibits a vertical asymptote at time \( T \). This dichotomy motivates the introduction of the following definition.

\begin{definition}\label{blow_up_global_G}
Let \( u \) be a solution of problem \eqref{problem_HR_G}. Then:
\begin{enumerate}
\renewcommand{\labelenumi}{\alph{enumi})}

  \item we say that \( u \) blows up in finite time, or equivalently that \( u \) is a nonglobal solution of problem \eqref{problem_HR_G}, if the corresponding maximal time of existence is finite, i.e., \( 0 < T < +\infty \), with
\[
u(\cdot,t) \in \ell^\infty(X) \hspace{1.2em} \text{for all } \text{ } t \in (0,T) \hspace{1.6em} \text{and} \hspace{1.6em} \lim_{t \to T^-} \|u(\cdot, t)\|_\infty = +\infty;
\]

\vspace{-1.18em}
  \item we say that \( u \) is a global solution of problem \eqref{problem_HR_G} if the corresponding maximal time of existence is \( T = +\infty \) and
\[
u(\cdot, t) \in \ell^\infty(X) \hspace{1.0em} \text{for all } \hspace{0.15em} t > 0.
\]
\end{enumerate}
\end{definition}

\noindent Throughout the following, we shall also deal with \emph{mild} solutions, subsolutions, and supersolutions to our problem. Nevertheless, in what follows, unless explicitly stated otherwise, the term \emph{solution} always refers to a \emph{classical} solution, in the sense of \cref{definition_sub_super_sols_G}. Whenever a result is formulated for a \emph{mild} solution, this will be clearly indicated. The same convention applies to subsolutions and supersolutions.

Let \( \{ P_t \}_{t \geq 0} \) be the {\it heat semigroup} on $X$ (see Section \ref{subsection_H_E_graph_setting} below).

\begin{definition}\label{definition_mild_sol_sub_super_sol}
Let \( \tau \in (0,+\infty) \), and consider an initial datum \( u_0 \), for problem \eqref{problem_HR_G}, satisfying \eqref{eq:HP_ID_G}.

\medskip
\noindent A function \( u : X \times [0,\tau] \to [0,\infty) \) is called a mild solution of problem \eqref{problem_HR_G} on \( [0,\tau] \) if \( u \in L^\infty([0,\tau]; \ell^\infty(X)) \) and if it holds
\begin{equation}\label{eq:nonlinear-mild}
u(\cdot,t) = P_t \, u_0 + \int_0^t \, P_{t-s}  \left[ u(\cdot,s)^p \right] \, ds \qquad \quad \text{for each } \hspace{0.1em} t \in (0,\tau],
\end{equation}
together with \( u(\cdot,0) = u_0 \), pointwise in \( X \). Such solution is said to be global if the previous equality holds for all \( \tau > 0 \), namely for all \( t \in (0,+\infty) \).

\medskip
\noindent A function \( \underline{u} : X \times [0,\tau] \to [0,\infty) \) is called a mild subsolution of problem \eqref{problem_HR_G} on \( [0,\tau] \) if \( \underline{u} \in L^\infty([0,\tau]; \ell^\infty(X)) \) and if it holds
\[
\underline{u}(\cdot,t) \leq P_t \, u_0 + \int_0^t \, P_{t-s} \left[ \underline{u}(\cdot,s)^p \right] \, ds \qquad \quad \text{for each } \hspace{0.1em} t \in (0,\tau],
\]
together with \( \underline{u}(\cdot,0) \leq u_0 \), pointwise in \( X \). Such function is said to be global mild subsolution if the previous inequality holds for all \( \tau > 0 \), namely for all \( t \in (0,+\infty) \).

\medskip
\noindent A function \( \overline{u} : X \times [0,\tau] \to [0,\infty) \) is called a mild supersolution of problem \eqref{problem_HR_G} on \( [0,\tau] \) if \( \overline{u} \in L^\infty([0,\tau]; \ell^\infty(X)) \) and if it holds
\[
\overline{u}(\cdot,t) \geq P_t \, u_0 + \int_0^t \, P_{t-s} \left[ \overline{u}(\cdot,s)^p \right] \, ds \qquad \quad \text{for each } \hspace{0.1em} t \in (0,\tau],
\]
together with \( \overline{u}(\cdot,0) \geq u_0 \), pointwise in \( X \). Such function is said to be global mild supersolution if the previous inequality holds for all \( \tau > 0 \), namely for all \( t \in (0,+\infty) \).
\end{definition}


\begin{remark}\label{remark_mild_well_posed}
(i)
Given \( \tau \in (0,+\infty) \), assume that $u \in L^\infty([0,\tau];\ell^\infty(X))$ and that \eqref{eq:nonlinear-mild}
holds $\mu$-a.e.\ in $X$ and for a.e.\ $t\in(0,\tau)$.
Since $\mu(x)>0$ for every $x\in X$, any $\mu$-null subset of $X$ is necessarily empty; indeed, the \( \mu \)-extension given by \eqref{mu_extended} ensures that, for all \( A \subseteq X \), it holds \( \mu(A) = 0 \) if and only if \( A = \emptyset \). This implies that any equality holding $\mu$-a.e.\ in $X$ actually holds pointwise in $X$.
In particular, in this scenario, the validity of \eqref{eq:nonlinear-mild} ensures that $u(x,t)$ is defined for every $x\in X$ and for a.e.\ $t\in(0,\tau)$.

Moreover, the right-hand side of \eqref{eq:nonlinear-mild} defines, for each fixed $x\in X$, a continuous function of $t\in[0,\tau]$. This follows from standard semigroup arguments and dominated convergence, using the main properties of the heat kernel \( p \) and of the heat semigroup $\{P_t\}_{t\ge 0}$, collected, respectively, in \emph{\cref{proposition_properties_HK_G}} and in \emph{\cref{subsection_heat_semigroup}} below.

Therefore $u$ admits a representative (coinciding with $u$ a.e.\ in $X\times(0,\tau)$)
which is defined for every $(x,t)\in X\times[0,\tau]$, satisfies the mild identity \eqref{eq:nonlinear-mild} pointwise in \( X\times[0,\tau] \), and is continuous in time for each fixed $x$.

In particular, these considerations show that, even if one does not a priori require a mild solution to be defined pointwise in space and time, this property actually follows naturally within our setting.

\smallskip
(ii) The integral appearing in \eqref{eq:nonlinear-mild} is therefore intended pointwise in $X$.
More precisely, for each fixed $x \in X$ and $t \in (0,\tau]$, the identity
\begin{equation}\label{eq:Duhamel_G}
u(x,t) = (P_t \, u_0)(x) + \int_0^t \left( P_{t-s} \, \left[u(\cdot,s) \right]^p \right)(x) \, ds,
\end{equation}
corresponding to \eqref{eq:nonlinear-mild} when evaluated at \( x \), is understood as an equality between real numbers, where the time integral is taken
in the usual Lebesgue sense. In particular, the equality in \eqref{eq:nonlinear-mild}
is required to hold for every $t \in (0,\tau]$, not merely for almost every time.

We stress that no continuity in time is assumed in the definition of mild solution. However (see \emph{\cref{lemma2}} below), if the weighted degree is uniformly bounded, namely \eqref{bounded_weighted_degree_2} holds, then the Laplacian $\Delta$ is a bounded operator on $\ell^\infty(X)$ and the associated heat semigroup is uniformly continuous on $\ell^\infty(X)$. As a consequence, every mild solution in the above sense belongs in fact to $C([0,\tau];\ell^\infty(X))$.
\end{remark}

\subsubsection{Basic properties of classical and mild solutions}

Let \( p \) denote the {\it heat kernel} on $X$ (see Section \ref{subsection_H_E_graph_setting} below).

\begin{definition}\label{definition_stochastically_complete_graph}
\noindent Let \( (X,\omega,\mu) \) be a connected and locally finite weighted graph, and consider the heat kernel \( p \) on \( (X,\omega,\mu) \). Then \( (X,\omega,\mu) \) is said to be stochastically complete if
\[
\sum_{y \in X} p(x,y,t) \, \mu(y) = 1 \hspace{2.5em} \text{for all } \hspace{0.1em} (x,t) \in X \times (0,+\infty).
\]
\noindent Otherwise, the weighted graph \( (X,\omega,\mu) \) is said to be stochastically incomplete.
\end{definition}

\noindent Observe that, at time \( t = 0 \), the equality above is automatically satisfied, since (see \cref{proposition_properties_HK_G} below) the heat kernel \( p \) satisfies:
\[
\sum_{y \in X} p(x,y,0) \, \mu(y) = \sum_{y \in X} \delta_{xy} = \delta_{xx} \equiv 1 \hspace{2.5em} \text{for all } x \in X.
\]

\noindent For future reference, we recall that, as it is well-known, the integer lattice \( (\mathbb{Z}^N,\omega_0,\mu) \) is stochastically complete, for each \( N \in \mathbb{N} \).

\medskip
The following two results address the relationship between mild solutions and classical solutions.

\begin{proposition}\label{proposition_Duhamel_formula_HR_G}
\noindent Suppose that \( (X,\omega,\mu) \) is connected, locally finite and stochastically complete and that \( u_0 \) satisfies \eqref{eq:HP_ID_G}. Let \( u : X \times [0,+\infty) \to \mathbb{R} \) be a global classical solution to problem \eqref{problem_HR_G}, with initial condition \( u_0 \). Then \( u \) is a global mild solution of problem \eqref{problem_HR_G}.
\end{proposition}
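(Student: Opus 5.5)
Fix $\tau>0$. We must show that $u$ satisfies \eqref{eq:Duhamel_G} on $[0,\tau]$; the regularity $u\in L^\infty([0,\tau];\ell^\infty(X))$ is immediate from \eqref{reg_sol_2_G}, applied with some $T'>\tau$. Set $f:=u^p$. On $X\times[0,\tau]$ it is bounded, say $\|f\|_\infty\le M_\tau^p$ with $M_\tau:=\sup_{X\times[0,\tau]}u$, and $f(x,\cdot)\in C^0([0,\tau])$ for each $x$. We introduce the candidate
\[
v(\cdot,t):=P_t u_0+\int_0^t P_{t-s}\,f(\cdot,s)\,ds .
\]
The strategy is to show that $v$ is a bounded classical solution of the \emph{linear} problem $v_t-\Delta v=f$, $v(\cdot,0)=u_0$. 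Then $w:=u-v$ is a bounded solution of the homogeneous heat equation with zero initial datum, and a uniqueness result for bounded solutions gives $w\equiv 0$, hence $u=v$.

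\textbf{Step 1: properties of $v$.} By the heat kernel properties in \cref{proposition_properties_HK_G} and Section~\ref{subsection_heat_semigroup} (positivity, $\sum_y p(x,y,t)\mu(y)\le 1$, and the fact that $t\mapsto p(x,y,t)$ is $C^1$ with $\partial_t p=\Delta_x p$), $P_tu_0$ is a bounded classical solution of the heat equation with datum $u_0$, and $\|P_tu_0\|_\infty\le\|u_0\|_\infty$. The Duhamel term satisfies $\big|\int_0^tP_{t-s}f\,ds\big|\le tM_\tau^p$. To differentiate it in $t$ pointwise, we write it as $\int_0^t\sum_y p(x,y,t-s)f(y,s)\mu(y)\,ds$. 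Since $f(y,\cdot)$ is continuous and $p(x,y,0)\mu(y)=\delta_{xy}$, this yields $\partial_t=f(x,t)+\int_0^t\Delta_x(P_{t-s}f(\cdot,s))(x)\,ds$. Local finiteness makes $\Delta_x$ a finite sum, so it commutes with the integral, and we obtain the equation.

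We expect this differentiation under the sum over $y$ to be the main technical obstacle, because $X$ is infinite and $\Delta$ need not be bounded on $\ell^\infty$. We would handle it through the integral form $P_tg-g=\int_0^t \Delta P_r g\,dr$, valid for bounded $g$ pointwise thanks to local finiteness. Applying it with $g=f(\cdot,s)$ and using Fubini (everything is nonnegative or bounded), we obtain $v(x,t)-u_0(x)=\int_0^t[\Delta v+f](x,r)\,dr$. Continuity of the integrand in $r$ then gives $v\in C^1$.

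\textbf{Step 2: uniqueness.} Now $w=u-v$ is bounded on $X\times[0,\tau]$ and solves $w_t=\Delta w$ with $w(\cdot,0)=0$. This is where stochastic completeness enters. On a stochastically complete graph, bounded solutions of the Cauchy problem for the heat equation are unique; this is the classical characterization of stochastic completeness (Wojciechowski, Grigor'yan), presumably recalled in Section~\ref{subsection_H_E_graph_setting}, and we invoke it. If it were unavailable, we would argue via the minimality of $P_t$ among nonnegative solutions. Applying this to $w^\pm$-type comparisons, namely to $\|w\|_\infty\cdot 1\pm w$, and using $P_t1=1$ shows $w\le 0$ and $-w\le 0$. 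Hence $u=v$ on $[0,\tau]$, and since $\tau$ was arbitrary, $u$ is a global mild solution.
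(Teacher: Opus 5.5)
Your proposal is correct and follows essentially the same route as the paper. You set $v=P_tu_0+\int_0^tP_{t-s}u^p\,ds$ and show it classically solves $v_t-\Delta v=u^p$ with $v(\cdot,0)=u_0$, using local finiteness to commute $\Delta$ with the integrals and $|\Delta g(x)|\le 2\,\mathrm{Deg}(x)\|g\|_\infty$ for domination; you then conclude $u-v\equiv 0$ from uniqueness of bounded solutions on stochastically complete graphs. The differences are minor: you get $C^1$ regularity from the integrated identity $v(x,t)-u_0(x)=\int_0^t[\Delta v+u^p](x,r)\,dr$ via Fubini instead of the paper's Leibniz rule with dominated derivative, and you work on $[0,\tau]$, where $w$ is bounded, with a valid minimality-based fallback for the uniqueness step.
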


\begin{proposition}\label{proposition_mild_implies_classical_global_general_graph}
Let \( (X,\omega,\mu) \) be a connected and locally finite weighted graph, and assume that the weighted degree is bounded, namely
\eqref{bounded_weighted_degree_2} holds. Then every global mild solution of problem \eqref{problem_HR_G}, according to \emph{\cref{definition_mild_sol_sub_super_sol}}, is in particular a global classical solution, in the sense of \emph{\cref{definition_sub_super_sols_G}}.
\end{proposition}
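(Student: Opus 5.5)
The plan is to exploit the fact that, under \eqref{bounded_weighted_degree_2}, the Laplacian is a bounded operator on $\ell^\infty(X)$ with $\|\Delta f\|_\infty \le 2D\|f\|_\infty$. Hence the heat semigroup is the exponential $P_t = e^{t\Delta}$, a uniformly continuous semigroup on $\ell^\infty(X)$; this is the content of \cref{lemma2}, which we take as given. Let $u$ be a global mild solution. Fix an arbitrary $\tau>0$; it suffices to verify \eqref{reg_sol_1_G}, \eqref{reg_sol_2_G}, nonnegativity and the pointwise equation on $X\times(0,\tau)$.

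Boundedness and nonnegativity are immediate from \cref{definition_mild_sol_sub_super_sol}. Indeed, $u\ge 0$ by definition, and $u\in L^\infty([0,\tau];\ell^\infty(X))$. By \cref{remark_mild_well_posed}, the identity \eqref{eq:Duhamel_G} holds pointwise for every $t\in[0,\tau]$. Its right-hand side is bounded by $\|u_0\|_\infty + \tau M^p$, where $M$ is the essential bound of $u$, because $P_t$ is a positive contraction on $\ell^\infty$. This gives \eqref{reg_sol_2_G} on each $[0,T']$.

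Next we establish continuity. By \cref{remark_mild_well_posed}(ii), we have $u\in C([0,\tau];\ell^\infty(X))$, and in particular $u(\cdot,0)=u_0$ continuously. The map $s\mapsto f(s):=u(\cdot,s)^p$ is therefore also continuous into $\ell^\infty(X)$, since $|a^p-b^p|\le pM^{p-1}|a-b|$ for $a,b\in[0,M]$.

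For differentiability, we use the standard argument for uniformly continuous semigroups. The term $t\mapsto P_tu_0=e^{t\Delta}u_0$ is $C^1$ in $\ell^\infty$, with derivative $\Delta P_t u_0$. For the Duhamel term $v(t)=\int_0^t e^{(t-s)\Delta}f(s)\,ds$ (a Bochner/Riemann integral in $\ell^\infty$, coinciding pointwise with the one in \eqref{eq:Duhamel_G}), write
\[
v(t)=e^{t\Delta}\int_0^t e^{-s\Delta}f(s)\,ds .
\]
This factorization is legitimate because $e^{-s\Delta}$ is defined and bounded, $\Delta$ being bounded. The integrand is continuous in $s$, so the integral is $C^1$ with derivative $e^{-t\Delta}f(t)$. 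The product rule then yields
\[
v'(t)=\Delta v(t)+f(t).
\]
Hence $u\in C^1([0,\tau];\ell^\infty(X))$ with $u'=\Delta u+u^p$. Evaluating at each $x$ (point evaluation is a bounded functional) gives \eqref{reg_sol_1_G} and the equation pointwise. Since $\tau$ was arbitrary, $u$ is a global classical solution.

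The main obstacle is making sure that the pointwise, Lebesgue-in-time integral of \eqref{eq:Duhamel_G} agrees with the $\ell^\infty$-valued integral, and that the continuity claimed in \cref{remark_mild_well_posed} is really available. If we want to avoid vector-valued calculus, the fallback is to work coordinatewise. We would differentiate $\sum_y p(x,y,t-s)f(y,s)\mu(y)$ under the sum and integral, using $\partial_t p=\Delta_x p$. Dominated convergence can be justified by the bound $|\Delta_x p(x,y,t)|\mu(y)$ summable in $y$ uniformly, which follows from bounded degree. Continuity of $s\mapsto f(x,s)$ again comes from the semigroup continuity.
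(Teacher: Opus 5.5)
Your proof is correct, but it takes a different and more elementary route than the paper's.

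\textbf{The paper's route.} After obtaining $u\in C([0,+\infty);\ell^\infty(X))$ from \cref{lemma2}, the paper appeals to two abstract results from Pazy's book.
\begin{enumerate}
\item It uses the local existence--uniqueness theorem for locally Lipschitz nonlinearities to identify $u$ with the unique mild solution of the abstract integral equation with $F(v)=|v|^p$, which also gives $t_{max}=+\infty$.
\item It then uses the regularity theorem, by which such mild solutions with $u_0\in D(\Delta)=\ell^\infty(X)$ are classical. This yields $u\in C^1((0,+\infty);\ell^\infty(X))$, and only then does it evaluate at each vertex.
\end{enumerate}

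\textbf{Your route.} You differentiate the Duhamel formula by hand, using that a bounded generator produces a group. The factorization $\int_0^t e^{(t-s)\Delta}f(s)\,ds=e^{t\Delta}\int_0^t e^{-s\Delta}f(s)\,ds$, the fundamental theorem of calculus and the product rule give $v'=\Delta v+f$ directly.

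This needs only the continuity of $s\mapsto u(\cdot,s)^p$ in $\ell^\infty(X)$. You correctly derive that from \cref{lemma2} together with the local Lipschitz bound for $a\mapsto a^p$. As a result you need neither the uniqueness/identification step nor the continuous differentiability of the nonlinearity that Pazy's regularity theorem requires. That property does hold for $|v|^p$ on $\ell^\infty(X)$ when $p>1$, but the paper does not check it.

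You are also more careful than the paper on two points:
\begin{itemize}
\item you upgrade the essential bound in $L^\infty([0,\tau];\ell^\infty(X))$ to a genuine supremum bound, using that the mild identity holds for every $t$;
\item you identify the pointwise Lebesgue time integral with the $\ell^\infty$-valued Riemann integral through point evaluations.
\end{itemize}

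\textbf{Comparison.} The paper's route is more general, since it is phrased for $C_0$-semigroups, whereas your factorization depends on $\Delta$ being bounded. Under \eqref{bounded_weighted_degree_2} this costs nothing. Both arguments rest on the same identification of the heat-kernel semigroup with $e^{t\Delta}$, taken from the proof of \cref{lemma2}. Your coordinatewise fallback is not needed.
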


\bigskip

The following two propositions establish the existence of a solution to problem {\eqref{problem_HR_G} lying between a given ordered subsolution and supersolution. Again, regarding the properties of the heat semigroup \( \{ P_t \}_{t \geq 0} \) appearing in the following statement, we refer to Section \ref{subsection_H_E_graph_setting}.

\begin{proposition}\label{proposition_existence_between_barriers_mild}
Suppose that the heat semigroup \( \{ P_t \}_{t \geq 0} \) is positivity preserving, and that it is an \( \ell^\infty \)-contraction (see \emph{\cref{definition_S_1_S_2_semigroup}} below). Let \( \tau \in  (0,+\infty) \), and consider an initial datum \( u_0 \), for problem \eqref{problem_HR_G}, satisfying \eqref{eq:HP_ID_G}. Assume that \( \underline{u} \) and \( \overline{u} \) are, respectively, a mild subsolution and a mild supersolution of problem \eqref{problem_HR_G} on \( [0,\tau] \). In addition, suppose that the following pointwise inequality holds:
\[
\underline{u}(\cdot,t) \leq \overline{u}(\cdot,t) \qquad \quad \text{for all } \hspace{0.1em} t \in [0,\tau].
\]
Then there exists a mild solution \( u \) to problem \eqref{problem_HR_G} on \( [0,\tau] \), satisfying the following pointwise bounds:
\[
\underline{u}(\cdot,t) \leq u(\cdot,t) \leq \overline{u}(\cdot,t) \qquad \quad \text{for all } \hspace{0.1em} t \in [0,\tau].
\]
\end{proposition}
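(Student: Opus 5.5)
We would prove the statement by monotone iteration, using the Duhamel operator
\[
\Phi[v](\cdot,t) := P_t\,u_0 + \int_0^t P_{t-s}\left[v(\cdot,s)^p\right] ds, \qquad t\in(0,\tau], \qquad \Phi[v](\cdot,0):=u_0 .
\]
The plan is to construct the solution as the monotone limit of the iterates of $\Phi$ started from $\underline u$.

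\textbf{Step 1: order preservation of $\Phi$.} The map $s\mapsto s^p$ is nondecreasing on $[0,\infty)$ and $P_{t-s}$ is positivity preserving. Hence, for nonnegative $v\le w$ in $L^\infty([0,\tau];\ell^\infty(X))$, we get $\Phi[v]\le\Phi[w]$ pointwise. Measurability in $s$ of $P_{t-s}[v(\cdot,s)^p](x)$ follows from the kernel representation, and the integral is finite by the $\ell^\infty$-contraction property.

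\textbf{Step 2: monotone sequence between the barriers.} Set $u_0^{(0)}:=\underline u$ and $u^{(k+1)}:=\Phi[u^{(k)}]$. The mild subsolution inequality gives $u^{(0)}\le\Phi[u^{(0)}]=u^{(1)}$. Since $\underline u\le\overline u$, Step 1 together with the supersolution inequality gives $u^{(1)}\le\Phi[\overline u]\le\overline u$. By induction, using Step 1 again,
\[
\underline u\le u^{(k)}\le u^{(k+1)}\le\overline u \quad\text{pointwise on } X\times[0,\tau].
\]
At $t=0$ all iterates equal $u_0$ for $k\ge1$, which is consistent with $\underline u(\cdot,0)\le u_0\le\overline u(\cdot,0)$. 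All iterates are bounded by $\|\overline u\|_{L^\infty([0,\tau];\ell^\infty)}$, and each lies in $L^\infty([0,\tau];\ell^\infty(X))$. The bound $\|P_t f\|_\infty\le\|f\|_\infty$ shows $\Phi$ maps bounded functions to bounded ones, with $\|\Phi[v]\|_\infty\le\|u_0\|_\infty+\tau\|v\|_\infty^p$.

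\textbf{Step 3: passage to the limit.} Define $u:=\lim_k u^{(k)}$, a pointwise nondecreasing limit that is bounded between $\underline u$ and $\overline u$. We need
\[
\int_0^t P_{t-s}\big[(u^{(k)})^p(\cdot,s)\big](x)\,ds \;\longrightarrow\; \int_0^t P_{t-s}\big[u^p(\cdot,s)\big](x)\,ds
\]
for each fixed $x$ and $t$. Writing $P_{t-s}f(x)=\sum_y p(x,y,t-s)f(y)\mu(y)$ with a nonnegative kernel (\cref{proposition_properties_HK_G}), the monotone convergence theorem applies twice: first to the sum over $y$, then to the $ds$-integral. This yields the identity \eqref{eq:nonlinear-mild} for $u$ at every $t\in(0,\tau]$, and $u(\cdot,0)=u_0$. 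Measurability of $u$ in $t$ holds because $u$ is a pointwise limit of measurable functions. Together with $0\le u\le\overline u$, this gives $u\in L^\infty([0,\tau];\ell^\infty(X))$.

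\textbf{Main obstacle.} The delicate point is Step 3, specifically justifying that $P_{t-s}$ commutes with the monotone limit. If the semigroup is only known abstractly on $\ell^\infty$ rather than through the kernel, we would first try to use the kernel representation from Section~\ref{subsection_H_E_graph_setting}. Failing that, we would use the uniform bound $\overline u^p$ and dominated convergence in $\ell^1$-duality, given that positivity preserving $\ell^\infty$-contractions of this kind are given by sub-Markovian kernels. Measurability of the time integrands is a secondary technical point, handled in the same way.
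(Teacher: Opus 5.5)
Your proposal is correct and is essentially the paper's proof: a monotone Picard iteration started from $\underline u$, induction giving $\underline u\le u^{(n)}\le u^{(n+1)}\le\overline u$, a pointwise monotone limit, and monotone convergence in the kernel sum over $y$. The one difference is in the $ds$-integral, where you apply monotone convergence a second time and the paper uses dominated convergence with the bound $\|\overline u\|_{L^\infty([0,\tau];\ell^\infty(X))}^p$; both are valid. Since $P_t$ is defined through the heat kernel in \eqref{definition_convolution_HK_general_graphs}, the fallback in your ``main obstacle'' paragraph is not needed. Note also that without a kernel (normality) the limit step would genuinely fail: an abstract positive $\ell^\infty$-contraction such as $f\mapsto L(f)\,\mathbf{1}$, with $L$ a Banach limit, does not commute with monotone pointwise limits.
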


\begin{proposition}\label{proposition_summarize_global_existence}
Let \( (X,\omega,\mu) \) be a connected and locally finite weighted graph, and assume that the weighted degree is bounded, namely \eqref{bounded_weighted_degree_2} holds. Consider an initial datum \( u_0 \), for problem \eqref{problem_HR_G}, satisfying \eqref{eq:HP_ID_G}. Assume that \( \underline{u} \) and \( \overline{u} \) are, respectively, a global mild subsolution and a global mild supersolution of problem \eqref{problem_HR_G}, such that the following inequality holds pointwise in \( X \):
\[
\underline{u}(\cdot,t) \leq \overline{u}(\cdot,t) \qquad \quad \text{for all } \hspace{0.1em} t \in [0,+\infty).
\]
Then there exists a global classical solution \( u \) to problem \eqref{problem_HR_G}, satisfying the following pointwise bounds:
\[
\underline{u}(x,t) \leq u(x,t) \leq \overline{u}(x,t) \qquad \text{for all } \hspace{0.1em} (x,t) \in X \times [0,+\infty).
\]
\end{proposition}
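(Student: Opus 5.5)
The plan is to combine \cref{proposition_existence_between_barriers_mild} on each bounded time interval with the regularity statement of \cref{proposition_mild_implies_classical_global_general_graph}. First we verify the hypotheses on the semigroup. Under \eqref{bounded_weighted_degree_2} the Laplacian is a bounded operator on $\ell^\infty(X)$, with $\|\Delta\|\le 2D$, and $P_t=e^{t\Delta}$; this is the content of \cref{lemma2} referred to in \cref{remark_mild_well_posed}. Writing $\Delta = A - \operatorname{Deg}$, where $A$ is the positive operator $Af(x)=\mu(x)^{-1}\sum_y\omega(x,y)f(y)$, we get $P_t = e^{-tD}e^{t(A+D-\operatorname{Deg})}$. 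Here $A+D-\operatorname{Deg}$ is a positive bounded operator, so $P_t$ is positivity preserving. Moreover $P_t1\le 1$, since bounded degree implies stochastic completeness; even without that, the heat kernel is sub-Markovian. Together these give the $\ell^\infty$-contraction property.

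Next we handle a fixed $\tau>0$. The restrictions of $\underline u,\overline u$ to $[0,\tau]$ are a mild subsolution and a mild supersolution on $[0,\tau]$, and they are ordered. So \cref{proposition_existence_between_barriers_mild} produces a mild solution $u^{\tau}$ on $[0,\tau]$ with $\underline u\le u^\tau\le\overline u$.

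The main obstacle is to patch these into a single global function. Different $\tau$ might a priori give different solutions, since \cref{proposition_existence_between_barriers_mild} need not produce a unique one. I would resolve this by proving uniqueness of bounded mild solutions on $[0,\tau]$. If $u,v$ are two such solutions, both bounded by some $M$ in $L^\infty([0,\tau];\ell^\infty)$, then $|u^p-v^p|\le pM^{p-1}|u-v|$. The contraction property then gives $\|u(t)-v(t)\|_\infty\le pM^{p-1}\int_0^t\|u(s)-v(s)\|_\infty ds$, and Gronwall yields $u=v$. The function $s\mapsto \|u(s)-v(s)\|_\infty$ is measurable, because by \cref{remark_mild_well_posed} mild solutions lie in $C([0,\tau];\ell^\infty(X))$.

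Hence $u^{\tau}=u^{\tau'}$ on $[0,\min(\tau,\tau')]$, and we may define $u(\cdot,t):=u^\tau(\cdot,t)$ for any $\tau\ge t$. This $u$ satisfies \eqref{eq:nonlinear-mild} for every $t>0$. It lies in $L^\infty([0,\tau];\ell^\infty)$ for each $\tau$, being bounded there by $\overline u$, and it is nonnegative. So it is a global mild solution, and it satisfies the bounds $\underline u\le u\le\overline u$ pointwise.

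Finally, \cref{proposition_mild_implies_classical_global_general_graph} upgrades $u$ to a global classical solution. The bounds are inherited because they hold pointwise in $X\times[0,\infty)$. If one prefers to avoid uniqueness, an alternative is to run the monotone iteration used in \cref{proposition_existence_between_barriers_mild} directly on $[0,\infty)$ starting from $\underline u$. The iterates are then defined consistently for all times, and their limit is the minimal solution; this also removes the patching issue.
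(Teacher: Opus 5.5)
Your proof is correct and follows the paper's route, which simply combines \cref{remark_properties_SG_connected_locally_finite}, \cref{proposition_existence_between_barriers_mild}, \cref{remark_proposition_existence_mild_ordered} and \cref{proposition_mild_implies_classical_global_general_graph}. The one addition is that you justify the passage from solutions on each $[0,\tau]$ to a single global mild solution, either by Gronwall uniqueness or by noting that the monotone iterates at time $t$ depend only on $[0,t]$ and so agree across different $\tau$; the paper calls this step trivial in \cref{remark_proposition_existence_mild_ordered}.
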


\section{The heat equation on infinite graphs}\label{subsection_H_E_graph_setting}

\noindent Consider the following global Cauchy problem for the unknown \( v = v(x,t) \):
\begin{equation}\label{problem_H_G}
\begin{cases}
v_t - \Delta v = 0 & \text{ in } X \times (0, +\infty) \\
v = v_0 & \text{ in } X \times \{0\}. \\
\end{cases}
\end{equation}
\noindent Let us assume the initial datum \( v_0 \) to be bounded, that is,
\[
v_0 \in \ell^\infty(X).
\]
\noindent Problem \eqref{problem_H_G} admits a \emph{heat kernel} \( p : X \times X \times [0,+\infty) \to \mathbb{R} \). In particular, \( p = p(x,y,t) \) is defined as the smallest nonnegative function which solves the following problem:
\begin{equation}\label{problem_solved_by_p}
\begin{cases}
p_t - \Delta p = 0 & \text{ in } \hspace{0.05em} X \times X \times (0, +\infty) \\
p = \frac{\delta_{xy}}{\mu(x)} & \text{ in } \hspace{0.05em} X \times X \times \{0\},
\end{cases}
\end{equation}
\noindent where the discrete Laplacian operator \( \Delta \) can be applied either in \( x \) or in \( y \).

\medskip
\noindent The next proposition states the main properties of the heat kernel \( p \); it follows from both {Lemma 3.5} in \cite{W} and {Theorem 2.5} in \cite{Woj}.

\begin{proposition}\label{proposition_properties_HK_G}
\noindent Let \( (X,\omega,\mu) \) be a connected and locally finite weighted graph, and consider the heat kernel \( p \) on \( (X,\omega,\mu) \). Then, for all \( t \geq 0, s \geq 0 \) and for any \( x \in X, y \in X \), the following properties hold:
\begin{enumerate}
\renewcommand{\labelenumi}{\alph{enumi})}
    \item \( p(x,y,\cdot) \in C^{\infty}((0,+\infty)) \cap C^0([0,+\infty)) \);
    \item \( p(x,y,t) > 0 \) whenever \( t > 0 \), while \( p(x,y,0) = \frac{\delta_{xy}}{\mu(x)} = \frac{\delta_{xy}}{\mu(y)} \);
    \item \( p(x,y,t) = p(y,x,t) \);
    \item \( \| p(x,\cdot,t) \|_1 = \sum \limits_{z \in X} p(x,z,t) \hspace{0.1em} \mu(z) \leq 1 \);
    \item \( \lim \limits_{t \to 0^+} \sum \limits_{z \in X} p(x,z,t) \, \varphi(z) \, \mu(z) = \varphi(x) \), for any \( \varphi \in \ell^\infty(X) \);
    \item \( p(x,y,t+s) = \sum \limits_{z \in X} p(x,z,t) \hspace{0.1em} p(z,y,s) \hspace{0.1em} \mu(z) \).
    \end{enumerate}
\end{proposition}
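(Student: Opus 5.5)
We would prove \cref{proposition_properties_HK_G} by the classical exhaustion construction, essentially as in \cite{W, Woj}. Fix an increasing sequence of finite sets \(X_1 \subset X_2 \subset \dots\) with \(\bigcup_n X_n = X\). On each \(X_n\) consider the Dirichlet Laplacian \(\Delta_n\): for \(f\) supported in \(X_n\) it equals \(\Delta f\) restricted to \(X_n\). Since \(X_n\) is finite, \(\Delta_n\) is a finite matrix, and we set
\[
p_n(x,y,t) := \frac{\big(e^{t\Delta_n}\mathbf 1_{\{y\}}\big)(x)}{\mu(y)} \quad \text{for } x,y\in X_n,
\]
and \(p_n := 0\) otherwise. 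On the finite level every property is elementary.
\begin{enumerate}
\renewcommand{\labelenumi}{\alph{enumi})}
\item Smoothness in \(t\) holds because \(p_n\) is a matrix exponential.
\item Nonnegativity holds because \(\Delta_n\) has nonnegative off-diagonal entries, so \(e^{t\Delta_n}\ge 0\) entrywise. Strict positivity for \(x,y\) in the same connected component of \(X_n\) follows from the power series of the exponential together with connectivity. At \(t=0\) we have \(p_n(x,y,0)=\delta_{xy}/\mu(y)\).
\item Symmetry \(p_n(x,y,t)=p_n(y,x,t)\) holds because \(\Delta_n\) is self-adjoint in \(\ell^2(X_n,\mu)\).
\item The bound \(\sum_z p_n(x,z,t)\mu(z)\le 1\) follows from the maximum principle on \(X_n\), since the constant \(1\) is a supersolution with nonnegative exterior values.
\item[f)] The semigroup identity is \(e^{(t+s)\Delta_n}=e^{t\Delta_n}e^{s\Delta_n}\).
\end{enumerate}
By the comparison principle on finite sets, \(p_n\le p_{n+1}\). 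Moreover \(p_n\le q\) for every nonnegative solution \(q\) of \eqref{problem_solved_by_p}.

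We then define \(p:=\lim_n p_n=\sup_n p_n\). Minimality of \(p\) among nonnegative solutions follows at once from \(p_n \le q\). The properties b) (positivity for \(t>0\), using that \(x,y\) eventually lie in the same component of \(X_n\)), c), d) and f) pass to the limit by monotone convergence; for f) the sum over \(z\) involves nonnegative terms increasing in \(n\). In addition, d) combined with c) gives the uniform bound
\[
p(x,y,t)\le \frac{1}{\mu(y)},
\]
and symmetrically \(p(x,y,t)\le 1/\mu(x)\).

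The main obstacle is a): we must show that \(p\) actually solves the heat equation and is smooth in \(t\), which requires passing to the limit in \(\Delta p_n\). We would write the integrated form
\[
p_n(x,y,t)=\frac{\delta_{xy}}{\mu(y)}+\int_0^t \Delta_x p_n(x,y,s)\,ds \qquad \text{for } x\in X_n.
\]
By local finiteness, \(\Delta_x\) is a finite sum over the neighbours of \(x\). Each term is bounded by the uniform bound above, so dominated convergence yields the same identity for \(p\). Hence \(p(x,y,\cdot)\) is continuous on \([0,\infty)\) and \(C^1\) on \((0,\infty)\) with \(p_t=\Delta_x p\). Since \(\Delta_x p\) is a finite combination of \(C^1\) functions, it is itself \(C^1\), and bootstrapping this argument gives \(C^\infty\). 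This proves a). The fact that \(p\) satisfies the equation in \(y\) follows from c).

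Finally, for e), fix \(x\) and split off the term \(z=x\):
\[
\Big|\sum_z p(x,z,t)\varphi(z)\mu(z)-\varphi(x)\Big|\le |\varphi(x)|\,\big|1-p(x,x,t)\mu(x)\big| + \|\varphi\|_\infty\sum_{z\ne x}p(x,z,t)\mu(z).
\]
By d), the last sum is at most \(1-p(x,x,t)\mu(x)\). By continuity at \(t=0\) from a), \(p(x,x,t)\mu(x)\to 1\), so both terms tend to zero as \(t\to 0^+\).
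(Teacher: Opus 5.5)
Your proposal is correct and follows the same route the paper relies on. The paper gives no proof of its own and simply cites Lemma~3.5 of \cite{W} and Theorem~2.5 of \cite{Woj}; those references construct $p$ exactly as you do, as the monotone limit of Dirichlet heat kernels on a finite exhaustion, and your handling of a) (integrated identity plus dominated convergence, using local finiteness and the bound $p\le 1/\mu(y)$) and of e) (via d) and continuity at $t=0$) supplies the details the paper leaves to those sources.
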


\noindent We shall now introduce the \emph{convolution operator} in the graph setting. More specifically, for every \( \varphi \in \ell^1(X,\mu) \cup \ell^\infty(X) \), we define its convolution with the heat kernel \( p \) as
\begin{equation}\label{definition_convolution_HK_general_graphs}
(P_t \, \varphi)(x) := \sum_{y \in X} p(x,y,t) \, \varphi(y) \, \mu(y) \hspace{2.0em} \text{for all } (x,t) \in X \times [0,+\infty).
\end{equation}

\noindent We highlight that, at time \( t = 0 \), the convolution operator defined in \eqref{definition_convolution_HK_general_graphs} simply coincides with the identity. Indeed, thanks to property b) in \cref{proposition_properties_HK_G}, for any \( \varphi \in \ell^1(X,\mu) \cup \ell^\infty(X) \) and for all \( x \in X \) we obtain:
\[
(P_0 \, \varphi)(x) = \sum_{y \in X} p(x,y,0) \, \varphi(y) \, \mu(y) = \sum_{y \in X} \delta_{xy} \, \varphi(y) = \varphi(x).
\]

\begin{lemma}\label{lemma_property_HK_G}
\noindent Let \( (X,\omega,\mu) \) be a connected and locally finite weighted graph, and consider the heat kernel \( p \) on \( (X,\omega,\mu) \). Then, for all \( t \geq 0 \), the operator \( P_t \) defined in \eqref{definition_convolution_HK_general_graphs} maps both \( \ell^1(X,\mu) \) and \( \ell^\infty(X) \) into themselves; in particular, it holds that
\[
\| P_t \, \varphi \|_{\infty} \leq \| \varphi \|_{\infty} \hspace{2.5em} \text{ for all } \hspace{0.1em} \varphi \in \ell^\infty(X)
\]
\noindent and that
\[
\| P_t \, \varphi \|_1 \leq \| \varphi \|_1 \hspace{2.5em} \text{ for all } \hspace{0.1em} \varphi \in \ell^1(X,\mu).
\]
\noindent In addition, for any \( \varphi \in \ell^1(X,\mu) \) we have:
\[
\sum_{x \in X} (P_t \, \varphi)(x) \, \mu(x) \leq \sum_{x \in X} \varphi(x) \, \mu(x) \hspace{2.5em} \text{for all } \hspace{0.1em} t \geq 0.
\]
\noindent If \( (X,\omega,\mu) \) is also stochastically complete, then the previous estimate becomes an identity, namely, for any \( \varphi \in \ell^1(X,\mu) \):
\[
\sum_{x \in X} (P_t \, \varphi)(x) \, \mu(x) = \sum_{x \in X} \varphi(x) \, \mu(x) \hspace{2.5em} \text{for all } \hspace{0.1em} t \geq 0.
\]
\end{lemma}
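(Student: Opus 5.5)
The proof follows directly from the properties of the heat kernel collected in \cref{proposition_properties_HK_G}, mainly nonnegativity b), the sub-Markov bound d), and symmetry c). No semigroup machinery beyond these is needed.

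\textbf{The $\ell^\infty$ bound.} Fix $t\ge 0$ and $\varphi\in\ell^\infty(X)$. For each $x\in X$, by b) and d),
\[
|(P_t\varphi)(x)|\le \sum_{y\in X} p(x,y,t)\,|\varphi(y)|\,\mu(y)\le \|\varphi\|_\infty \sum_{y\in X} p(x,y,t)\,\mu(y)\le \|\varphi\|_\infty .
\]
This shows at once that the series defining $P_t\varphi$ converges absolutely, that $P_t\varphi\in\ell^\infty(X)$, and that $\|P_t\varphi\|_\infty\le\|\varphi\|_\infty$. At $t=0$ everything is trivial, since $P_0$ is the identity.

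\textbf{The $\ell^1$ bound.} Take $\varphi\in\ell^1(X,\mu)$. For fixed $x$, absolute convergence of $\sum_y p(x,y,t)\varphi(y)\mu(y)$ needs a bound on $p(x,\cdot,t)$. Since $p(x,y,t)\mu(y)\le 1$ by d), we get $p(x,y,t)\le 1/\mu(y)$. Using b) and c), which give $p(x,y,0)=\delta_{xy}/\mu(x)$ and symmetry, we can also apply d) with the roles of $x$ and $y$ exchanged, giving $p(x,y,t)\mu(x)\le 1$. Hence $p(x,y,t)\le 1/\mu(x)$, so
\[
\sum_y p(x,y,t)\,|\varphi(y)|\,\mu(y)\le \frac{\|\varphi\|_1}{\mu(x)}<\infty .
\]
Then, by Tonelli's theorem (all terms nonnegative), symmetry c), and d),
\[
\sum_x |(P_t\varphi)(x)|\,\mu(x)\le \sum_y |\varphi(y)|\,\mu(y)\sum_x p(y,x,t)\,\mu(x)\le \|\varphi\|_1 .
\]

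\textbf{The integral inequality.} The statement $\sum_x (P_t\varphi)(x)\mu(x)\le\sum_x\varphi(x)\mu(x)$ involves signed $\varphi$. Since the double series is absolutely convergent by the previous step, Fubini's theorem lets us exchange the sums:
\[
\sum_x (P_t\varphi)(x)\,\mu(x)=\sum_y \varphi(y)\,\mu(y)\,m(y,t),\qquad m(y,t):=\sum_x p(y,x,t)\,\mu(x)\in[0,1].
\]
For $\varphi\ge 0$ this gives the inequality immediately, because $m\le 1$. For signed $\varphi$, the inequality as written can fail when the graph is stochastically incomplete. So I would read it as applying to nonnegative $\varphi$ (the case used for $u^p$), or state it as an inequality of absolute values, and flag this point.

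\textbf{The stochastically complete case.} Under stochastic completeness (\cref{definition_stochastically_complete_graph}) we have $m(y,t)\equiv 1$, and the same Fubini computation yields equality for every $\varphi\in\ell^1(X,\mu)$, with no sign restriction.

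The only delicate point is justifying the exchange of sums for the $\ell^1$ bound and the identities. That is handled by the pointwise kernel bound $p(x,y,t)\le 1/\mu(x)$ and by Tonelli's theorem applied to $|\varphi|$.
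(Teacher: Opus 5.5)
Your proof is correct and is exactly the derivation from properties b)--d) of \cref{proposition_properties_HK_G} that the paper leaves implicit, since it states this lemma without proof; in particular, your bound $p(x,y,t)\le 1/\mu(x)$, obtained from c) and d), is what makes $P_t\varphi$ well defined for $\varphi\in\ell^1(X,\mu)$ when $\inf_X\mu=0$. Your caveat is also right: for signed $\varphi$ the third inequality fails on any stochastically incomplete graph (for $\varphi=-\mathds{1}_{\{y_0\}}$ it reads $-\mu(y_0)\,(P_t\mathds{1})(y_0)\le-\mu(y_0)$, which is false whenever $(P_t\mathds{1})(y_0)<1$), so it must be restricted to $\varphi\ge0$ or replaced by $\bigl|\sum_{x}(P_t\varphi)(x)\,\mu(x)\bigr|\le\|\varphi\|_1$; this is harmless here, because the paper only ever invokes the $\ell^\infty$ estimate of this lemma.
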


\medskip
\noindent In the present analysis we are only interested in \emph{classical} solutions to problem \eqref{problem_H_G}.

\begin{definition}\label{definition_sol__subsol_supersol_H_G}
\noindent Let \( (X,\omega,\mu) \) be a connected and locally finite weighted graph, and assume that \( v_0 \in \ell^\infty(X) \). Consider a function \( v : X \times [0,+\infty) \to \mathbb{R} \) such that
\begin{equation}\label{reg_sol__H_G}
v(x,\cdot) \in C^1((0,+\infty)) \cap C^0([0,+\infty)) \hspace{1.6em} \text{for all } \hspace{0.1em} x \in X.
\end{equation}
\noindent Then \( v \) is said to be a solution of problem \eqref{problem_H_G}, with initial condition \( v_0 \), if \( v \) satisfies \eqref{problem_H_G} pointwise. Moreover, if \( v \in L^\infty(X \times (0,+\infty)) \), then \( v \) is said to be a bounded solution of problem \eqref{problem_H_G}.
\end{definition}

\noindent The following result concerns the well-posedness of problem \eqref{problem_H_G}; it is obtained by combining {Theorem 7.2} and {Theorem 7.3} in \cite{KLWb}.

\begin{theorem}\label{theorem_well_posedness_H_G}
\noindent Let \( (X,\omega,\mu) \) be a connected and locally finite weighted graph, and assume that \( v_0 \in \ell^\infty(X) \). Then the function \( v : X \times [0,+\infty) \to \mathbb{R} \), defined as
\[
v(x,t) := (P_t \, v_0)(x) = \sum_{y \in X} p(x,y,t) \, v_0(y) \, \mu(y) \hspace{2.0em} \text{for all } \hspace{0.1em} (x,t) \in X \times [0,+\infty),
\]
\noindent is a bounded solution to problem \eqref{problem_H_G}. In particular, if \( (X,\omega,\mu) \) is stochastically complete, then this function is the only bounded solution to problem \eqref{problem_H_G}.
\end{theorem}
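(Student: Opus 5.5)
The plan is to verify, in turn, boundedness, time regularity, the pointwise equation and the initial condition for \(v=P_t v_0\), and then to obtain uniqueness through a Laplace-transform reduction to a \(\lambda\)-harmonic problem.

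\textbf{Boundedness and initial condition.} Boundedness of \(v=P_t v_0\) is immediate from \cref{lemma_property_HK_G}, since \(\|P_t v_0\|_\infty\le\|v_0\|_\infty\). The initial condition \(v(x,0)=v_0(x)\) follows from \(P_0=\mathrm{Id}\). Continuity at \(t=0^+\) is property e) of \cref{proposition_properties_HK_G}.

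\textbf{The equation and time regularity.} For the equation I would not differentiate the series \(\sum_y p(x,y,t)v_0(y)\mu(y)\) termwise directly, since without a degree bound this is delicate. Instead I will use the construction of the minimal heat kernel, which is the reason \(p\) is the \emph{smallest} nonnegative solution of \eqref{problem_solved_by_p}.
\begin{itemize}
\item Split \(v_0=v_0^+-v_0^-\); by linearity it suffices to treat \(v_0\ge 0\).
\item Take an exhaustion \(\Omega_n\uparrow X\) by finite sets, and let \(v_n\) solve the Dirichlet problem on \(\Omega_n\), which is a finite linear ODE system, extended by \(0\) outside \(\Omega_n\).
\item By the comparison principle for finite systems, \(0\le v_n\le v_{n+1}\le \|v_0\|_\infty\), and \(v_n\uparrow P_t v_0\), since the Dirichlet kernels increase to \(p\).
\item Each \(v_n\) satisfies \(v_n(x,t)=v_0(x)+\int_0^t\Delta v_n(x,s)\,ds\) for \(x\in\Omega_n\).
\item By local finiteness, \(\Delta v_n(x,s)\) is a finite sum bounded by \(2\operatorname{Deg}(x)\|v_0\|_\infty\). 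Dominated convergence then gives
\[
v(x,t)=v_0(x)+\int_0^t\Delta v(x,s)\,ds .
\]
\end{itemize}
From this identity, \(v(x,\cdot)\) is Lipschitz for every \(x\). Hence \(\Delta v(x,\cdot)\), being a finite combination of continuous functions, is continuous. Therefore \(v(x,\cdot)\in C^1((0,\infty))\) and \(v_t=\Delta v\) pointwise. This establishes \eqref{reg_sol__H_G} and the equation.

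\textbf{Uniqueness under stochastic completeness.} Let \(w\) be the difference of two bounded solutions. Then \(w\) is a bounded solution of the heat equation with \(w(\cdot,0)=0\). For \(\lambda>0\), set
\[
W(x)=\int_0^\infty e^{-\lambda t}w(x,t)\,dt ,
\]
which is bounded by \(\|w\|_\infty/\lambda\). Since \(\Delta\) acts at each \(x\) as a finite sum, it commutes with the integral. Integrating by parts using \(w(x,0)=0\), and using that \(w_t=\Delta w\) is bounded in \(t\) for fixed \(x\), gives
\[
\Delta W=\int_0^\infty e^{-\lambda t}w_t\,dt=\lambda W .
\]
So \(W\) is a bounded \(\lambda\)-harmonic function.

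The main obstacle is the step stochastic completeness \(\Rightarrow\) every bounded solution of \(\Delta W=\lambda W\) vanishes; this is the Khas'minskii-type characterization (see \cite{KLWb, Woj}). I would invoke it directly. If a self-contained argument is wanted, I would compare \(W^+\) with \(\|W\|_\infty\,\lambda\int_0^\infty e^{-\lambda t}(1-P_t1)\,dt\) via a minimum principle on the finite exhaustion, and conclude \(W^+\equiv0\) because \(P_t1\equiv1\); the same argument applies to \(W^-\).

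Once \(W\equiv0\) for every \(\lambda>0\), injectivity of the Laplace transform, applied to the continuous bounded function \(w(x,\cdot)\), yields \(w\equiv0\).
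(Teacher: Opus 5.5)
Your proof is correct, but it takes a different route from the paper. The paper gives no argument for this theorem; it only cites Theorems 7.2 and 7.3 of \cite{KLWb}. You instead reconstruct the standard proof behind those results.

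\textbf{Existence.} You approximate by Dirichlet problems on a finite exhaustion and pass to the limit in the integrated equation at a fixed vertex, using dominated convergence.

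\textbf{Uniqueness.} You take the Laplace transform of the difference $w$ of two bounded solutions. This gives a bounded $W$ with $\Delta W=\lambda W$, which the Khas'minskii-type criterion forces to vanish. Your self-contained sketch of that criterion works:
\begin{itemize}
\item $W^+$ satisfies $\Delta W^+\ge\lambda W^+$.
\item The finite-set maximum principle bounds $W^+/\|W\|_\infty$ by the functions that are $\lambda$-harmonic in $\Omega_n$ and equal to $1$ outside $\Omega_n$.
\item By the monotone Dirichlet approximation, these functions decrease to $\lambda\int_0^\infty e^{-\lambda t}(1-P_t1)\,dt$, which is $0$ under stochastic completeness.
\end{itemize}

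The Laplace transform genuinely needs $w$ bounded on all of $X\times(0,\infty)$. That is exactly what the statement's notion of bounded solution provides. When the paper later applies the theorem to $w=u-v$ in the proof of \cref{proposition_Duhamel_formula_HR_G}, $w$ is only bounded on finite strips. With your approach, that case reduces to the global one: continue $w$ after a time $T$ by $P_{t-T}w(\cdot,T)$, which is $C^1$ across $T$ by your existence step.

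\textbf{Trade-off.} Your route is self-contained. Its only external input is the monotone approximation of the minimal kernel by Dirichlet kernels (together with its resolvent version), which is not among the heat-kernel properties the paper lists. The citation is shorter and sits inside a full characterization of stochastic completeness in a more general setting.

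\textbf{One correction.} Termwise differentiation of $\sum_y p(x,y,t)v_0(y)\mu(y)$ is not delicate without a degree bound. At a fixed vertex, local finiteness and $\sum_y p(z,y,t)\mu(y)\le 1$ give the domination $2\|v_0\|_\infty\operatorname{Deg}(x)$. This is exactly the argument in step (I) of the proof of \cref{proposition_Duhamel_formula_HR_G}. So the existence part also follows directly from $\partial_t p=\Delta_x p$, without the exhaustion.
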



\subsection{The heat semigroup}\label{subsection_heat_semigroup}

\noindent In the literature, the family \( \{ P_t \}_{t \geq 0} \), introduced through \eqref{definition_convolution_HK_general_graphs}, is referred to as the \emph{heat semigroup} generated by the operator \( - \Delta \). We shall now present some basic definitions and properties regarding the heat semigroup on generic graphs.

\begin{definition}\label{definition_S_1_S_2_semigroup}
\noindent The heat semigroup \( \{ P_t \}_{t \geq 0} \) is said to be
\begin{enumerate}
\renewcommand{\labelenumi}{\alph{enumi})}
    \item positivity preserving if, for all \( t \geq 0 \), it holds:
    \begin{equation}\label{definition_positivity_preserving}
    P_t \, \varphi \geq 0 \hspace{1.0em} \text{pointwise in } X,
    \hspace{2.5em} \text{for all } \hspace{0.1em} \varphi \in \ell^\infty(X) \hspace{0.1em} \text{ such that } \hspace{0.1em} \varphi \geq 0 \hspace{0.1em} \text{ in } X;
    \end{equation}

    \item \( \ell^\infty \)-contractive if, for all \( t \geq 0 \), we have:
    \begin{equation}\label{definition_ell_infty_contraction}
    \| P_t \, \varphi \|_\infty \leq \| \varphi \|_\infty \hspace{2.5em} \text{for all } \hspace{0.1em} \varphi \in \ell^\infty(X);
    \end{equation}

    \item uniformly continuous if it holds:
    \[
    \lim_{t \to 0^+} \, \|P_t - I\|_{\mathcal{L}} = 0,
    \]
    where the norm \( \| \cdot \|_{\mathcal{L}} \) is clearly the one in \( \mathcal{L}(\ell^\infty(X)) \) and \( I : \mathcal{L}(\ell^\infty(X)) \to \mathcal{L}(\ell^\infty(X)) \) represents the identity operator;

    \item strongly continuous, or equivalently of class \( C_0 \), if
    \[
    \lim_{t \to 0^+} \, \|P_t \, \varphi - \varphi \|_\infty = 0 \qquad \quad \text{for every } \hspace{0.1em} \varphi \in \ell^\infty(X),
    \]
    namely if \( P_t \, \varphi \) converges to \( \varphi \) in \( \ell^\infty(X) \), as \( t \to 0^+ \).
\end{enumerate}
\end{definition}

\begin{remark}\label{remark_properties_SG_connected_locally_finite}
\noindent As already specified, we shall always assume that the weighted graph \( (X,\omega,\mu) \) is connected and locally finite. Then, by combining property b) in \emph{\cref{proposition_properties_HK_G}} with the positivity of the node measure \( \mu \), the inequality \eqref{definition_positivity_preserving} is trivially verified. On the other hand, in this scenario \emph{\cref{lemma_property_HK_G}} can be applied, yielding the validity of \eqref{definition_ell_infty_contraction}. Therefore, under our assumptions on the graph, the heat semigroup \( \{ P_t \}_{t \geq 0} \) is both positivity preserving and \( \ell^\infty \)-contractive.

\noindent In addition, under condition \eqref{definition_ell_infty_contraction}, the map \( t \mapsto P_t \) acts from \( [0,+\infty) \) to \( \mathcal{L}(\ell^\infty(X)) \). Indeed, after fixing an arbitrary time instant \( t \geq 0 \), \eqref{definition_ell_infty_contraction} allows us to infer that \( P_t \) maps \( \ell^\infty(X) \) into itself, and also that this operator is bounded. Moreover, it is easy to verify that \( P_t \) is a linear operator. In conclusion, within our framework it holds \( P_t \in \mathcal{L}(\ell^\infty(X)) \).
\end{remark}

\begin{remark}\label{remark_uniformly_strongly_continuous_HK}
\noindent Notice that, under our hypotheses on the graph, the uniform continuity of the heat semigroup implies the strong continuity. Indeed, for all \( \varphi \in \ell^\infty(X) \), \( t \geq 0 \), we can write:
\[
\|P_t \, \varphi - \varphi \|_\infty = \| (P_t - I) \, \varphi \|_\infty \leq \|P_t - I\| _{\mathcal{L}} \, \| \varphi \|_\infty.
\]
In particular, if the semigroup is uniformly continuous, then the right-hand side tends to zero as \( t \to 0^+ \), so that also \( \|P_t \, \varphi - \varphi \|_\infty \to 0 \), yielding the strong continuity.
\end{remark}

\noindent For future reference, we also state the validity, under condition \eqref{definition_ell_infty_contraction}, of the semigroup identity
\begin{equation}\label{identity_P_t_P_s}
P_{t+s} = P_t \, P_s,
\end{equation}
\noindent holding for all \( s,t \geq 0 \) as an equality between two elements of \( \mathcal{L}(\ell^\infty(X)) \). This fact is due to the combination between the definition \eqref{definition_convolution_HK_general_graphs} of the operator \( P_{(\cdot)} \) and property f) in \cref{proposition_properties_HK_G}.

\medskip
\noindent Another fact which will be expedient in the following analysis is the estimate
\begin{equation}\label{L_norm_P_t}
\| P_t \|_{\mathcal{L}(\ell^\infty(X))} \leq 1 \quad \qquad \text{for all } t \geq 0,
\end{equation}
obtained by combining the definition of the operator norm \( \| \cdot \|_{\mathcal{L}(\ell^\infty(X))} \) with the \( \ell^\infty \)-contractivity of the heat semigroup, which holds in our framework, as already specified in \cref{remark_properties_SG_connected_locally_finite}.

\subsection{The heat equation on the integer lattice}\label{subsection_H_E_lattice}

\noindent Let us consider problem \eqref{problem_H_G} posed on the lattice, namely:
\[
\begin{cases}
v_t - \Delta v = 0 & \text{ in } \mathbb{Z}^N \times (0, +\infty) \\
v = v_0 & \text{ in } \mathbb{Z}^N \times \{0\}, \\
\end{cases}
\]
\noindent with the same notation and assumptions as those stated at the beginning of \cref{subsection_H_E_graph_setting}.

\medskip
\noindent Although the analytical expression of the heat kernel associated to problem \eqref{problem_H_G} is often not available, in the specific context of the integer lattice \( p : \mathbb{Z}^N \times \mathbb{Z}^N \times [0,+\infty) \to \mathbb{R} \) is given by
\[
p(x,y,t) = \frac{1}{2N} \, e^{-t} \prod_{i=1}^{N} I_{|x_i-y_i|} \left( \frac{t}{N} \right).
\]
\noindent One possible reference for this fact is \cite{HSS}.

\medskip
\noindent The function \( I \) appearing in the explicit formula for the heat kernel is known as the \emph{modified I-Bessel function}. For any \( m \in \mathbb{Z} \), the function \( I_m \) is defined as a solution to the differential equation
\[
t^2 \frac{d^2 w}{dt^2} + t \frac{dw}{dt} - (t^2 + m^2) = 0,
\]
\noindent and it holds \( I_m : (0,+\infty) \to (0,+\infty) \), together with \( I_m = I_{-m} \). Therefore, whenever \( m \in \mathbb{Z} \), it is common to write this function as \( I_{|m|} \), in order to maintain the positivity of the argument.

\medskip
\noindent In order to ease the notation, we now set
\begin{equation}\label{definition_K_t_HK_lattice}
K_t(x) := p(x,0,t) = \frac{1}{2N} \, e^{-t} \prod_{i=1}^{N} I_{|x_i|} \left( \frac{t}{N} \right) \hspace{2.5em} \text{for all } (x,t) \in \mathbb{Z}^N \times [0,+\infty),
\end{equation}
\noindent which will also be referred to as the \emph{heat kernel on the lattice}, with a slight abuse of notation. Now, it is trivial to notice that the following relation holds:
\[
p(x,y,t) = K_t(x-y) \hspace{2.5em} \text{for all } (x,y,t) \in \mathbb{Z}^N \times \mathbb{Z}^N \times [0,+\infty).
\]
\noindent In this way, thanks to the definition of the operator \( P_t \) given in \eqref{definition_convolution_HK_general_graphs}, we can extend, for any function \( \varphi \in \ell^1(\mathbb{Z}^N) \cup \ell^\infty(\mathbb{Z}^N) \) and for all \( (x,t) \in \mathbb{Z}^N \times [0,+\infty) \), the definition of the convolution operator * as follows:
\begin{equation}\label{definition_convolution_lattice}
(K_t * \varphi)(x) := \hspace{0.3em} 2N \sum_{y \in \mathbb{Z}^N} K_t(x-y) \, \varphi(y)
= \hspace{0.3em} \sum_{y \in \mathbb{Z}^N} p(x,y,t) \, \varphi(y) \, \mu(y)
= \hspace{0.3em} (P_t \, \varphi)(x).
\end{equation}
\noindent This is the discrete analogue of the standard convolution operator defined in the Euclidean setting.

\medskip
\noindent The following properties of the heat kernel on the lattice can be easily established (see, e.g., \cite{book_Barlow}).

\begin{proposition}\label{proposition_properties_HK_lattice}
\noindent Consider the integer lattice \( (\mathbb{Z}^N,\omega_0,\mu) \), together with the heat kernel \( K_t \) defined in \eqref{definition_K_t_HK_lattice}. Then, for all \( t \geq 0, s \geq 0 \) and for any \( x \in \mathbb{Z}^N \), the following properties hold:
\begin{enumerate}
\renewcommand{\labelenumi}{\alph{enumi})}
    \item \( K_{(\cdot)}(x) \in C^{\infty}((0,+\infty)) \cap C^0([0,+\infty)) \);
    \item \( K_t(x) > 0 \) whenever \( t > 0 \);
    \item \( K_0(0) = \frac{1}{2N} \), while \( K_0(x) = 0 \) whenever \( x \neq 0 \);
    \item \( \| K_t \|_1 = 2N \sum \limits_{y \in \mathbb{Z}^N} K_t(y) = 1 \);
    \item \( \sum \limits_{y \in \mathbb{Z}^N} (K_t * \varphi)(y) = \sum \limits_{y \in \mathbb{Z}^N} \varphi(y) \), for every \( \varphi \in \ell^1(\mathbb{Z}^N) \);
    \item \( \lim \limits_{\hspace{0.15em} t \to 0^+} (K_t * \varphi)(x) = \varphi(x) \), for every \( \varphi \in \ell^\infty(\mathbb{Z}^N) \);
    \item \( (K_t * K_s)(x) = K_{t+s}(x) \).
\end{enumerate}
\end{proposition}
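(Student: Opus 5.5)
The plan is to derive each item either from the explicit formula \eqref{definition_K_t_HK_lattice} or from the general properties in \cref{proposition_properties_HK_G} and \cref{lemma_property_HK_G}, using the identification $p(x,y,t)=K_t(x-y)$ and $\mu\equiv 2N$.

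\emph{Items a)--c).} Since $K_t(x)=p(x,0,t)$, items a) and b) are exactly properties a) and b) of \cref{proposition_properties_HK_G}. The same property b) gives $p(x,0,0)=\delta_{x0}/\mu(x)=\delta_{x0}/(2N)$, which is item c).

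\emph{Items d) and e).} Stochastic completeness of $\mathbb{Z}^N$, as recalled after \cref{definition_stochastically_complete_graph}, gives $\sum_y p(0,y,t)\,2N=1$. By symmetry, $p(0,y,t)=p(y,0,t)=K_t(y)$, so this is d). As an independent check, one can use the generating function $\sum_{m\in\mathbb{Z}}I_m(s)=e^{s}$. The product structure then gives
\[
\sum_y \prod_{i=1}^N I_{|y_i|}(t/N)=e^{t},
\]
so $2N\sum_y K_t(y)=e^{-t}e^{t}=1$. Item e) is the identity case of \cref{lemma_property_HK_G} under stochastic completeness, after dividing both sides by the constant $\mu=2N$.

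\emph{Items f) and g).} Item f) is property e) of \cref{proposition_properties_HK_G}, rewritten through \eqref{definition_convolution_lattice}. For g), write $(K_t*K_s)(x)=2N\sum_z K_t(x-z)K_s(z)$. Translation invariance gives $K_s(z)=p(z,0,s)$, and then property f) of \cref{proposition_properties_HK_G} yields
\[
\sum_z p(x,z,t)\,p(z,0,s)\,\mu(z)=p(x,0,t+s)=K_{t+s}(x).
\]

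\emph{Main obstacle.} The only delicate point is justifying $p(x,y,t)=K_t(x-y)$, i.e.\ that the explicit Bessel formula is the minimal heat kernel. I would take this from \cite{HSS}, as the paper does. A direct verification would use the recurrence $I_m'=\tfrac12(I_{m-1}+I_{m+1})$ to check the heat equation. Boundedness of $K_t$, together with uniqueness of bounded solutions (\cref{theorem_well_posedness_H_G}), would then identify it with the minimal kernel.
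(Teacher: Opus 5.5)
Your proposal is correct. It differs from the paper, which gives no argument and simply points to \cite{book_Barlow}, where these are standard facts about the continuous-time simple random walk on $\mathbb{Z}^N$. You instead derive every item inside the paper's own framework, by specializing results it has already recorded. Items a)--c), f) and g) are properties a), b), e), f) of \cref{proposition_properties_HK_G}, read through $p(x,y,t)=K_t(x-y)$ and \eqref{definition_convolution_lattice}. Items d) and e) are stochastic completeness and the equality case of \cref{lemma_property_HK_G}, divided by the constant $\mu\equiv 2N$.

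What your route buys is self-containment. It shows that the only lattice-specific inputs are the constant measure, translation invariance and stochastic completeness, so the reader need not translate Barlow's normalisation into the present one. The identity $\sum_{m\in\mathbb{Z}}I_m(s)=e^{s}$ gives d), and by Fubini e), without any appeal to stochastic completeness. The recurrence $2I_m'=I_{m-1}+I_{m+1}$ lets one verify the Bessel formula rather than import it from \cite{HSS}.

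One dependency is worth keeping in mind. Identifying the explicit kernel with the minimal one uses uniqueness of bounded solutions, i.e.\ stochastic completeness of $\mathbb{Z}^N$, which the paper only quotes as well known. This is harmless, because $\operatorname{Deg}\equiv 1$ gives $|\Delta w(x)|\le 2\|w\|_\infty$. Gronwall's inequality applied to $t\mapsto\sup_x|w(x,t)|$ then yields uniqueness of bounded solutions directly. Applied to $1-P_t1$, the same estimate yields stochastic completeness itself.
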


\noindent We now establish two-sided estimates for the heat kernel $K$. They are essentially contained in \cite{book_Barlow}; more precisely, they are obtained from those results through straightforward arguments.

\begin{lemma}\label{lemma_inequalities_HK_lattice}
\noindent The heat kernel \( K_t \) defined in \eqref{definition_K_t_HK_lattice} satisfies both
\[
K_t(x) \geq \frac{c_1}{t^{N/2}} \, e^{- c_2 \, \frac{|x|^2}{t}}
\hspace{2.0em} \text{for all } \hspace{0.1em} (x,t) \in \mathbb{Z}^N \times (0,+\infty) \hspace{0.3em} \text{ such that } \hspace{0.3em} t \geq \max \left\{ 1, \rho(x,0) \right\}
\]
\noindent and
\[
K_t(x) \leq \frac{c_3}{t^{N/2}} \hspace{2.5em} \text{for all } \hspace{0.1em} (x,t) \in \mathbb{Z}^N \times (0,+\infty),
\]
\noindent where \( \rho \) denotes the combinatorial distance, while \( c_1, c_2, c_3 \) are three positive constants depending only on the dimension \( N \).
\end{lemma}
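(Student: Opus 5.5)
The plan is to obtain both estimates by factorising $K_t$ and controlling each one-dimensional factor. From \eqref{definition_K_t_HK_lattice},
\[
K_t(x) = \frac{1}{2N}\prod_{i=1}^N \Big( e^{-t/N} I_{|x_i|}(t/N)\Big),
\]
so $2N K_t(x)=\prod_{i=1}^N q_{s}(x_i)$ with $s:=t/N$. Here $q_s(m):=e^{-s}I_{|m|}(s)$ is the transition density of the continuous-time simple random walk on $\mathbb Z$ with jump rate one. It therefore suffices to prove one-dimensional bounds of the form
\[
q_s(m)\le \frac{C}{\sqrt s}\quad (s>0), \qquad q_s(m)\ge \frac{c}{\sqrt s}\,e^{-c' m^2/s}\quad (s\ge \max\{1/N,|m|/N\}),
\]
and then multiply them over $i=1,\dots,N$. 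Since $\sum_i m_i^2=|x|^2$ and $t^{-N/2}=N^{-N/2}s^{-N/2}$, both claims of \cref{lemma_inequalities_HK_lattice} follow with constants depending only on $N$. For the lower bound, note that the hypothesis $t\ge\rho(x,0)=\sum_i|x_i|$ gives $t\ge |x_i|$ for every $i$, so $s\ge |x_i|/N$ holds in each coordinate, and likewise $s\ge 1/N$.

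For the upper bound I would use the Fourier representation
\[
q_s(m)=\frac{1}{2\pi}\int_{-\pi}^{\pi} e^{-s(1-\cos\theta)}\cos(m\theta)\,d\theta\le \frac{1}{2\pi}\int_{-\pi}^{\pi} e^{-2s\theta^2/\pi^2}\,d\theta,
\]
where the inequality uses $1-\cos\theta\ge 2\theta^2/\pi^2$ on $[-\pi,\pi]$. This is at most $C/\sqrt s$ for all $s>0$. For small $s$ the trivial bound $q_s\le1$ is also available, and it is consistent with $C s^{-1/2}$ once $s\le 1$, after adjusting $C$.

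The lower bound is the main obstacle, because it must hold in the off-diagonal range $|m|\le Ns$ and not only for $|m|\lesssim\sqrt s$. One route is simply to invoke the Gaussian lower bound for the continuous-time walk on $\mathbb Z^N$ in \cite{book_Barlow}, valid for $t\ge \max\{1,\rho(x,0)\}$. That would prove the claim directly, with the factorisation serving only to check that the normalisation $\mu\equiv 2N$ is consistent. To make it self-contained I would instead use the integral representation $I_m(s)=\frac1\pi\int_0^\pi e^{s\cos\theta}\cos(m\theta)\,d\theta$ together with known uniform asymptotics. More elementarily, I would write $q_s(m)=e^{-s}\sum_k \frac{(s/2)^{2k+|m|}}{k!(k+|m|)!}$ and estimate it from below by the single term with $k\approx$ the maximiser. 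Stirling's formula then gives $q_s(m)\ge \frac{c}{\sqrt s}\exp(-s\,\phi(|m|/s))$, where $\phi(r)=1-\sqrt{1+r^2}+r\,\mathrm{arcsinh}(r)$. On the bounded range $r\le N$ one has $\phi(r)\le c' r^2$, which yields $e^{-c' m^2/s}$. Finally, the regime $s\in[1/N,1]$ with $|m|\le N$ involves finitely many $m$ and a compact range of $s$, where $q_s(m)>0$ is continuous; a direct lower bound there follows by shrinking $c$.
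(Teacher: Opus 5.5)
Your upper bound is correct and genuinely different from the paper's. The factorisation $2NK_t(x)=\prod_i q_{t/N}(x_i)$, the Fourier representation and $1-\cos\theta\ge 2\theta^2/\pi^2$ give $q_s(m)\le \tfrac12\sqrt{\pi/(2s)}$ for every $s>0$. Hence $K_t\le c_3t^{-N/2}$ follows with an explicit constant, whereas the paper just quotes Lemma 3.13 and Theorem 5.15 of \cite{book_Barlow}. For the lower bound, your first option, quoting Theorem 6.28(b) of \cite{book_Barlow}, is exactly the paper's proof. That result is stated in $\rho$ under the hypothesis $t\ge\max\{1,\rho(x,0)\}$, and the paper passes to $|x|$ via $\rho\le\sqrt N\,|x|$, giving $c_2=CN$; no factorisation is needed there.

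The self-contained lower bound you say you would actually use has a gap at the Stirling step. Bounding the series by its single largest term does not give $c\,s^{-1/2}e^{-s\phi(r)}$. Write $a=s/2$, $|m|=s\sinh\lambda$, $k^*=ae^{-\lambda}$ and $k^*+|m|=ae^{\lambda}$. Stirling then gives that the largest term, multiplied by $e^{-s}$, is about $\frac{1}{2\pi\sqrt{k^*(k^*+|m|)}}\,e^{-s\phi(r)}=\frac{1}{\pi s}\,e^{-s\phi(r)}$. Already for $m=0$ this is $1/(\pi s)$, while $e^{-s}I_0(s)\sim(2\pi s)^{-1/2}$. The mass of the series is spread over $\asymp\sqrt s$ indices around $k^*$, so one term loses a factor $\sqrt s$ per coordinate and the product yields only $t^{-N}$ instead of $t^{-N/2}$. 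This is not a harmless constant: the case $p=1+2/N$ of Theorem~\ref{theorem_blow_up_lattice_sub_critical_p} needs exactly the power $t^{-N/2}$ to produce $\int ds/(s+t^*)$ and the logarithmic divergence.

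To repair it, use the ratio $T_{k+1}/T_k=a^2/\bigl((k+1)(k+|m|+1)\bigr)$ together with $k^*(k^*+|m|)\le a^2\le (k^*+1)(k^*+|m|+1)$. This shows $T_{k^*+j}\ge e^{-C_N}T_{k^*}$ for $|j|\le\sqrt s$, using $k^*\ge c_N s$, which holds because $e^{-\lambda}=\sqrt{1+r^2}-r\ge\sqrt{1+N^2}-N$ when $r\le N$. Summing over that window recovers the missing $\sqrt s$. Alternatively, invoke the uniform Debye asymptotics of $I_m(s)$, or simply cite Barlow as the paper does.

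The remaining pieces are fine. Indeed $\phi(r)\le r^2/2$ for all $r$, since $\phi(0)=\phi'(0)=0$ and $\phi''(r)=(1+r^2)^{-1/2}\le 1$. The compactness argument for bounded $s$ also works, but it must cover $s\in[1/N,S_0]$, where $S_0=S_0(N)$ is the threshold beyond which your asymptotic estimates hold, not just $[1/N,1]$.
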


\noindent \emph{Proof.} We first recall that the lattice admits the expression \eqref{definition_rho_lattice} for the combinatorial distance:
\[
\rho(x,y) = \sum_{i=1}^{N} |x_i - y_i| \hspace{2.5em} \text{for all } x,y \in \mathbb{Z}^N.
\]
\noindent We now exploit result b) of {Theorem 6.28} in \cite{book_Barlow}, which reads, within our notation, as follows:
\[
p(x,y,t) \geq c \, t^{-N/2} \, \exp \left( - C \, \frac{\left[ \rho(x,y) \right]^2}{t} \right),
\]
\noindent holding for all \( (x,y,t) \in \mathbb{Z}^N \times \mathbb{Z}^N \times (0,+\infty) \) such that \( t \geq \max \left\{ 1, \rho(x,y) \right\} \), where \( c \) and \( C \) are two positive constants only depending on \( N \). In particular, setting \( y = 0 \) and exploiting the relation \eqref{definition_K_t_HK_lattice} between \( K_t \) and \( p(\cdot,\cdot,t) \) yields:
\begin{equation}\label{lower_bound_HK_lattice_1}
K_t(x) = p(x,0,t) \geq c \, t^{-N/2} \, \exp \left( - C \, \frac{\left[ \rho(x,0) \right]^2}{t} \right),
\end{equation}
\noindent holding for all \( (x,t) \in \mathbb{Z}^N \times (0,+\infty) \) such that \( t \geq \max \left\{ 1, \rho(x,0) \right\} \).

\medskip
\noindent Furthermore, we can make use of the bounds \eqref{equivalence_rho_d__lattice} between the Euclidean distance and the combinatorial one, namely:
\[
d(x,y) \leq \rho(x,y) \leq \sqrt{N} \, d(x,y) \hspace{2.5em} \text{for all } x,y \in \mathbb{Z}^N.
\]
\noindent More specifically, we exploit the second inequality with \( y = 0 \), which thanks to \eqref{lower_bound_HK_lattice_1} yields, for all \( (x,t) \in \mathbb{Z}^N \times (0,+\infty) \) such that \( t \geq \max \left\{ 1, \rho(x,0) \right\} \), the following bound:
\[
K_t(x) \geq c \, t^{-N/2} \, \exp \left( - C \, \frac{\left[ \rho(x,0) \right]^2}{t} \right)
\geq c \, t^{-N/2} \, \exp \left( - C N \, \frac{\left[ d(x,0) \right]^2}{t} \right)
\equiv \frac{c}{t^{N/2}} \, e^{- C N \, \frac{|x|^2}{t}},
\]
\noindent where the last inequality exploits the fact that the exponential function involved here is decreasing, while the last equality is due to the fact that \( d(x,0) = |x-0| \equiv |x| \). Therefore, after setting \( c_1 := c \) and \( c_2 := C N \), the first bound in the statement is proved.

\medskip
\noindent In order to conclude the proof, we refer again to \cite{book_Barlow}. In particular, combining {Lemma 3.13} with {Theorem 5.15} ensures that
\[
p(x,y,t) \leq \frac{c_3}{t^{N/2}} \hspace{2.5em} \text{for all } \hspace{0.1em} (x,y,t) \in \mathbb{Z}^N \times \mathbb{Z}^N \times (0,+\infty),
\]
\noindent where \( c_3 \) is a positive constant only depending on \( N \). The second bound in the statement immediately follows by applying the last inequality with \( y = 0 \), and by exploiting the relation \eqref{definition_K_t_HK_lattice}. \hfill \( \square \)

\medskip
\noindent The next result, corresponding to {Theorem 5.1} in \cite{HSS}, addresses the well-posedness of problem \eqref{problem_H_G} posed on the lattice, giving also an explicit representation formula for the unique bounded solution.

\begin{theorem}\label{theorem_well_posedness_H_G_lattice}
If \( v_0 \in \ell^\infty(\mathbb{Z}^N) \), then the only bounded solution of problem \eqref{problem_H_G} posed on the lattice is
\[
v(x,t) = (K_t * v_0)(x) = e^{-t} \sum_{y \in \mathbb{Z}^N} \, \prod_{i=1}^{N} I_{|x_i - y_i|} \left( \frac{t}{N} \right) \, v_0(y) \hspace{2.5em} \text{for all } (x,t) \in \mathbb{Z}^N \times [0,+\infty),
\]
\noindent where the convolution is intended as in \eqref{definition_convolution_lattice}.

\end{theorem}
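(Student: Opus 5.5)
Since the lattice heat kernel is explicit, uniqueness is the only substantive point: existence is already contained in Theorem~\ref{theorem_well_posedness_H_G}. The plan is to obtain both parts from the general graph results and then identify the convolution formula.

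\textbf{Existence.} The lattice $(\mathbb{Z}^N,\omega_0,\mu)$ is connected and locally finite, so Theorem~\ref{theorem_well_posedness_H_G} applies. Hence $v(x,t)=(P_t v_0)(x)$ is a bounded solution of \eqref{problem_H_G}. By \eqref{definition_convolution_lattice} we have $P_t v_0 = K_t * v_0$. Inserting the explicit kernel \eqref{definition_K_t_HK_lattice} gives
\[
(K_t*v_0)(x)=2N\sum_{y\in\mathbb{Z}^N}\frac{1}{2N}e^{-t}\prod_{i=1}^N I_{|x_i-y_i|}\Big(\frac tN\Big)v_0(y),
\]
and the factors $2N$ cancel to give the stated formula. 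For completeness one can also check the equation directly from the kernel. Properties a), f) of Proposition~\ref{proposition_properties_HK_lattice} give regularity and $v(\cdot,0)=v_0$. Termwise differentiation in $t$ is justified by local uniform summability of $\partial_t K_t$, which follows from Bessel bounds such as $I_m(s)\le (s/2)^m e^{s}/m!$. Together with $\partial_t K_t=\Delta K_t$, this verifies the equation.

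\textbf{Uniqueness.} This follows from the second part of Theorem~\ref{theorem_well_posedness_H_G}, once we know the lattice is stochastically complete. That is exactly property d) of Proposition~\ref{proposition_properties_HK_lattice}: $\sum_y p(x,y,t)\mu(y)=2N\sum_y K_t(x-y)=1$. If one wants a self-contained argument instead, take the difference $w$ of two bounded solutions, which has zero initial datum. Test it against $\phi_\varepsilon(x)=e^{-\varepsilon|x|}$ (or $\cosh$-type weights). The key estimate is $\Delta\phi_\varepsilon\le C\varepsilon\,\phi_\varepsilon$ with $C\varepsilon\to0$. A Grönwall argument on $\sum_x |w|\phi_\varepsilon\mu$, or a maximum principle applied to $w-\delta(e^{ct}\psi)$ with $\psi$ growing, then yields $w\equiv0$.

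The main obstacle would be this self-contained uniqueness step, namely producing a barrier $\psi\to\infty$ with $\Delta\psi\le c\psi$; on $\mathbb{Z}^N$, $\psi(x)=1+|x|^2$ works since $\Delta|x|^2=1$. Within the paper, however, it reduces immediately to stochastic completeness of $\mathbb{Z}^N$ together with Theorem~\ref{theorem_well_posedness_H_G}.
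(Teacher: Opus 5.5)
Your proof is correct, but it does not follow the paper, which gives no argument for this statement and imports it from \cite{HSS} (Theorem~5.1 there). You obtain it as a special case of the general-graph result, Theorem~\ref{theorem_well_posedness_H_G} (itself taken from \cite{KLWb}):
\begin{itemize}
\item existence is $P_tv_0$;
\item the Bessel formula follows from \eqref{definition_convolution_lattice} and \eqref{definition_K_t_HK_lattice} once the factors $2N$ cancel;
\item uniqueness follows from stochastic completeness, which you correctly read off from property d) of Proposition~\ref{proposition_properties_HK_lattice} via $z=x-y$.
\end{itemize}
This shows the lattice theorem is a corollary of the general theory.

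It is not, however, independent of \cite{HSS}. The nontrivial input is that the minimal heat kernel defined through \eqref{problem_solved_by_p} equals the explicit Bessel expression, and that is exactly what the paper takes from \cite{HSS}. Your main line uses this identification twice: once for the formula and once for stochastic completeness.

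Your ``self-contained'' remarks remove this dependence and are worth keeping. First, verify directly that $K_t*v_0$ is a bounded solution, using $\partial_tK_t=\Delta K_t$, the M-test with $I_m(s)\le (s/2)^me^s/m!$, and $2N\sum_zK_t(z)=1$. Second, get uniqueness from the barrier $\psi=1+|x|^2$, which satisfies $\Delta\psi=1\le\psi$. Together these prove the theorem with no reference to the minimal kernel. Taking $v_0=\delta_y/\mu(y)$ then yields the identification $p(x,y,t)=K_t(x-y)$ instead of assuming it.

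Two minor corrections:
\begin{itemize}
\item In the weighted $\ell^1$ variant, with $E(t):=\sum_x|w(x,t)|\phi_\varepsilon(x)\mu(x)$, you need Kato's inequality $\mathrm{sgn}(w)\,\Delta w\le\Delta|w|$. No smallness of $C\varepsilon$ is needed, since $E(0)=0$ and $E(t)\le C\varepsilon\int_0^tE(s)\,ds$ already force $E\equiv0$.
\item $v(\cdot,0)=v_0$ comes from property c); property f) only gives continuity at $t=0$.
\end{itemize}
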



\section{Auxiliary results: proofs}\label{ardim}

\noindent \emph{Proof of \emph{\cref{proposition_Duhamel_formula_HR_G}}.} Let us fix an arbitrary time instant \( t_* \in (0,+\infty) \), and consider the function \( v : X \times [0,t_*) \to [0,+\infty) \), defined as
\begin{equation}\label{definition_function_v_instrumental}
v(x,t) := (P_t \, u_0)(x) + \int_0^t \, (P_{t-s} \, \left[ u(\cdot,s) \right]^p)(x) \, ds \qquad \quad \text{for all } (x,t) \in X \times [0, t_*).
\end{equation}
By exploiting \cref{lemma_property_HK_G} and the fact that \( u_0 \in \ell^\infty(X) \), we know that it holds
\[
| P_t \, u_0 (x) | \leq \| P_t \, u_0 \|_{\infty} \leq \| u_0 \|_{\infty} < +\infty \hspace{2.5em} \text{ for all } x \in X.
\]
In addition, after fixing \( x \in X \) and \( t \in [0,t_*) \), for every \( s \in [0,t] \) we have:
\begin{equation}\label{P_t_s_bound_1}
\begin{aligned}
\left| (P_{t-s} \, \left[ u(\cdot,s) \right]^p)(x) \right| & = \left| \sum_{y \in X} \, p(x,y,t-s) \, \left[ u(y,s) \right]^p \, \mu(y) \right| \\
& \leq \left[ \|u\|_{L^\infty(X \times [0,t_*))} \right]^p \, \sum_{y \in X} \, p(x,y,t-s) \, \mu(y) \\
& = \left[ \|u\|_{L^\infty(X \times [0,t_*))} \right]^p \\
& < +\infty.
\end{aligned}
\end{equation}
Here, we have exploited the nonnegativity of \( p \), the strict positivity of \( \mu \), the assumption of stochastic completeness, and the boundedness condition \eqref{reg_sol_2_G}. Moreover, we made use of the identity \( \| (\cdot)^p \|_\infty = \left[ \| (\cdot) \|_\infty \right]^p \). By integrating the previous estimate over \( s \in [0,t] \), we get:
\[
\begin{aligned}
\left| \int_{0}^{t} \, (P_{t-s} \, \left[ u(\cdot,s) \right]^p)(x) \, ds \right|
& \leq \int_{0}^{t} \, \left| (P_{t-s} \, \left[ u(\cdot,s) \right]^p)(x) \right| \, ds \\
& \leq \int_{0}^{t} \, \left[ \|u\|_{L^\infty(X \times [0,t_*))} \right]^p \, ds \\
& = t \left[ \|u\|_{L^\infty(X \times [0,t_*))} \right]^p \\
& < +\infty.
\end{aligned}
\]
Hence all the series implicitly involved in \eqref{definition_function_v_instrumental} are absolutely convergent, and the function \( v \) is well-defined. The fact that \( v \geq 0 \) is a trivial consequence of the action of \( P_{(\cdot)} \), together with the nonnegativity of \( u_0, u, \mu, \) and \( p \).

\medskip
\noindent We now separately study the time differentiability of the two terms at the right-hand side of \eqref{definition_function_v_instrumental}.

\medskip
\noindent \textbf{(I)} Let us fix an arbitrary vertex \( x \in X \). We aim to show that the map \( t \mapsto (P_t \, u_0)(x) \) belongs to \( C^0([0,t_*)) \cap C^1((0,t_*)) \) and satisfies \( \frac{\partial}{\partial t} (P_t \, u_0)(x) = (\Delta P_t \, u_0)(x)\).

\medskip
\noindent In order to do so, we consider, for \( h > 0 \), the incremental quotient
\begin{equation}\label{definition_incremental_quotient}
\frac{(P_{t+h} \, u_0)(x) - (P_t \, u_0)(x)}{h} = \sum_{y \in X} \, \left[ \frac{p(x,y,t+h) - p(x,y,t)}{h} \right] u_0(y) \, \mu(y),
\end{equation}
where the equality is a simple consequence of the action of the operator \( P_{(\cdot)} \). Now, for every fixed vertex \( y \in X \), we have:
\[
\lim_{h \to 0^+} \, \frac{p(x,y,t+h) - p(x,y,t)}{h} = \frac{\partial}{\partial t} \, p(x,y,t) = \Delta_x \, p(x,y,t).
\]
Here, the first equality is due to property a) of \cref{proposition_properties_HK_G}, stating that \( p(x,y,\cdot) \in C^{\infty}((0,+\infty)) \), while the second one holds since the heat kernel solves problem \eqref{problem_solved_by_p}, where the discrete Laplacian operator \( \Delta \) can be applied either in \( x \) or in \( y \).

\medskip
\noindent In particular, the time regularity of the heat kernel allows us to exploit the mean value formula, yielding the following, for every \( h > 0 \):
\[
\frac{p(x,y,t+h) - p(x,y,t)}{h} = \frac{1}{h} \, \int_t^{t+h} \, \frac{\partial}{\partial \tau} \, p(x,y,\tau) \, d\tau = \frac{1}{h} \, \int_t^{t+h} \, \Delta_x \, p(x,y,\tau) \, d\tau.
\]
By combining the obtained identity with \eqref{definition_incremental_quotient}, we obtain:
\begin{equation}\label{formula_before_time_space_swap}
\frac{(P_{t+h} \, u_0)(x) - (P_t \, u_0)(x)}{h} = \frac{1}{h} \, \sum_{y \in X} \left[ \int_t^{t+h} \, \Delta_x \, p(x,y,\tau) \, d\tau \right] u_0(y) \, \mu(y).
\end{equation}
For the moment, let us fix \( h > 0 \) and \( \tau \in (t,t+h) \). The meaning of the term involving the Laplacian with respect to \( x \) of the heat kernel is clearly the following:
\[
\Delta_x \, p(x,y,\tau) = \frac{1}{\mu(x)} \, \sum_{z \in X} \, \left[p(z,y,\tau) - p(x,y,\tau) \right] \omega(x,z),
\]
\noindent which corresponds to a sum over the neighbors of \( x \); this sum is then finite, since the graph is assumed to be locally finite. Therefore, this sum can commute with the one over \( y \in X \). In particular, the following can be obtained:
\[
\sum_{y \in X} \, \Delta_x \, p(x,y,\tau) \, u_0(y) \, \mu(y)
= \frac{1}{\mu(x)} \, \sum_{z \in X} \, \left\{ \sum_{y \in X} \, \left[ p(z,y,\tau) - p(x,y,\tau) \right] u_0(y) \, \mu(y) \right\} \omega(x,z).
\]
\noindent By the definition of the operator \( P_{(\cdot)} \), for each \( y \in X \) we have:
\[
\begin{aligned}
\sum_{y \in X} \, \left[ p(z,y,\tau) - p(x,y,\tau) \right] u_0(y) \, \mu(y) & = \sum_{y \in X} \, p(z,y,\tau) \, u_0(y) \, \mu(y) - \sum_{y \in X} \, p(x,y,\tau) \, u_0(y) \, \mu(y) \\
& = P_{\tau} \, u_0 (z) -  P_{\tau} \, u_0 (x),
\end{aligned}
\]
therefore the previous identity becomes
\begin{equation}\label{internal_formula_after_swap}
\begin{aligned}
\sum_{y \in X} \, \Delta_x \, p(x,y,\tau) \, u_0(y) \, \mu(y) & = \frac{1}{\mu(x)} \, \sum_{z \in X} \, \left[ P_{\tau} \, u_0 (z) -  P_{\tau} \, u_0 (x) \right] \omega(x,z) \\
& = \left[ \Delta \left( P_{\tau} \, u_0 \right) \right] (x).
\end{aligned}
\end{equation}
We now want to swap the sum and the time integral at the right-hand side of \eqref{formula_before_time_space_swap}. In order to do so, by reasoning as above, we obtain the following estimate:
\[
\begin{aligned}
\sum_{y \in X} \, \left| \Delta_x \, p(x,y,\tau) \right| \, u_0(y) \, \mu(y) &
\leq \frac{\| u_0 \|_\infty}{\mu(x)} \, \sum_{z \in X} \left\{ \sum_{y \in X} \, \left[ p(z,y,\tau) + p(x,y,\tau) \right] \mu(y) \right\} \omega(x,z) \\
& = \frac{2 \, \| u_0 \|_\infty}{\mu(x)} \, \sum_{z \in X} \, \omega(x,z) \\
& = 2 \, \| u_0 \|_\infty \, \operatorname{Deg}(x).
\end{aligned}
\]
Here, after swapping the finite sum over \( z \sim x \) with the one over \( y \), we have exploited the assumption \eqref{eq:HP_ID_G} satisfied by \( u_0 \), together with the nonnegativity of the heat kernel and the stochastic completeness of the graph. Finally, the last equality is due to \cref{definition_weighted_degree}.

\medskip
\noindent We notice that the right-hand side in the last estimate is independent of \( \tau \). Moreover, for the fixed vertex \( x \in X \) it clearly holds \( \operatorname{Deg}(x) < +\infty \). In conclusion, the series
\[
\sum_{y \in X} \, \Delta_x \, p(x,y,\tau) \, u_0(y) \, \mu(y)
\]
is absolutely convergent and uniformly dominated in \( \tau \). Therefore, the Fubini-Tonelli Theorem can be applied, yielding the exchange between the sum and the integral at the right-hand side of \eqref{formula_before_time_space_swap}. We then obtain:
\begin{equation}\label{formula_after_time_space_swap}
\begin{aligned}
\frac{(P_{t+h} \, u_0)(x) - (P_t \, u_0)(x)}{h} & = \frac{1}{h} \, \int_t^{t+h} \left[ \sum_{y \in X} \, \Delta_x \, p(x,y,\tau) \, u_0(y) \, \mu(y) \right] d\tau \\
& = \frac{1}{h} \, \int_t^{t+h} \left[ \Delta \left( P_{\tau} \, u_0 \right) \right] (x) \, d\tau,
\end{aligned}
\end{equation}
where the last equality is due to \eqref{internal_formula_after_swap}.

\medskip
\noindent In particular, thanks to the first equality in \eqref{formula_after_time_space_swap} and to the previous estimate, we infer:
\[
\begin{aligned}
\left| (P_{t+h} \, u_0)(x) - (P_t \, u_0)(x) \right| & = \left| \int_t^{t+h} \left[ \sum_{y \in X} \, \Delta_x \, p(x,y,\tau) \, u_0(y) \, \mu(y) \right] d\tau \right| \\
& \leq \int_t^{t+h} \left[ \sum_{y \in X} \, \left| \Delta_x \, p(x,y,\tau) \right| u_0(y) \, \mu(y) \right] d\tau \\
& \leq 2 \, h \, \| u_0 \|_\infty \, \operatorname{Deg}(x).
\end{aligned}
\]
Now, we recall that the vertex \( x \) is fixed, and the right-hand side of the obtained estimate tends to zero as \( h \to 0^+ \). Therefore, it holds:
\[
\lim_{h \to 0^+} \, (P_{t+h} \, u_0)(x) = (P_t \, u_0)(x),
\]
hence the map \( t \mapsto (P_t \, u_0)(x) \) is continuous on \( [0,t_*) \).

\medskip
\noindent Now, since the graph is assumed to be locally finite, then for each \( t \in [0,t_*) \) the quantity \( \left[ \Delta \left( P_t \, u_0 \right) \right] (x) \) is given by a finite sum, depending on the time variable only through terms of the form \( P_t \, u_0 (w) \), for some \( w \in X \). All these contributes are continuous, as we have just shown; we then conclude that also the map \( t \mapsto \left[ \Delta \left( P_t \, u_0 \right) \right] (x) \) is continuous on \( [0,t_*) \). This fact allows us to apply the mean–value theorem for integrals, yielding the following, for all \( t \in (0,t_*) \):
\[
\lim_{h \to 0^+} \, \frac{1}{h} \, \int_t^{t+h} \left[ \Delta \left( P_{\tau} \, u_0 \right) \right] (x) \, d\tau = \left[ \Delta \left( P_t \, u_0 \right) \right] (x).
\]
By combining this result with \eqref{formula_after_time_space_swap}, we obtain that the map \( t \mapsto \left( P_t \, u_0 \right) (x) \) is continuously differentiable on \( (0,t_*) \), with
\[
\frac{d}{dt} \left( P_t \, u_0 \right) (x) = \lim_{h \to 0^+} \, \frac{(P_{t+h} \, u_0)(x) - (P_t \, u_0)(x)}{h} = \left[ \Delta \left( P_t \, u_0 \right) \right] (x).
\]
\noindent Since the vertex \( x \in X \) is arbitrary, we have the following identity, satisfied pointwise in \( X \):
\[
\frac{\partial}{\partial t} \left( P_t \, u_0 \right) = \Delta \left( P_t \, u_0 \right) \qquad \qquad \text{for all } t \in (0,t_*).
\]

\medskip
\noindent \textbf{(II)} We now focus on the time differentiability of the integral term at the right-hand side of \eqref{definition_function_v_instrumental}. For the ease of notation, after fixing an arbitrary node \( x \in X \), we set
\[
F(t,s) := (P_{t-s} \, \left[ u(\cdot,s) \right]^p)(x) \qquad \qquad \text{for all } \hspace{0.1em} t,s \hspace{0.3em} \text{ such that } \hspace{0.3em} 0 \leq s \leq t < t_*.
\]
Thanks to the boundedness condition \eqref{reg_sol_2_G}, we  know that \( u \) satisfies, for all \( t \in [0,t_*) \):
\begin{equation}\label{bound_u_p_Duhamel}
\left[ u(x,t) \right]^p \leq \left[ \| u \|_{L^\infty(X \times [0,t_*))} \right]^p < +\infty,
\end{equation}
where the obtained bound is independent of the time variable. This fact allows us to repeat the same passages of step \textbf{(I)}, inferring that, for fixed \( s \in [0,t] \), the map \( \tau \mapsto (P_{\tau} \left[ u(\cdot,s) \right]^p)(x) \) belongs to \( C^0([0,t_*)) \hspace{0.1em} \cap \hspace{0.1em} C^1((0,t_*)) \) and satisfies the following identity pointwise in \( X \):
\[
\frac{\partial}{\partial \tau} \left( P_{\tau} \left[ u(\cdot,s) \right]^p \right) = \Delta \left( P_{\tau} \left[ u(\cdot,s) \right]^p \right) \qquad \qquad \text{for all } \tau \in (0,t_*).
\]
This implies that, for fixed \( s \in [0,t] \), the map \( t \mapsto F(t,s) = (P_{t-s} \left[ u(\cdot,s) \right]^p)(x) \) belongs to \( C^0([0,t_*)) \hspace{0.1em} \cap \hspace{0.1em} C^1((0,t_*)) \), and it holds, pointwise in \( X \):
\begin{equation}\label{partial_time_derivative_F}
\frac{\partial}{\partial t} F(t,s) = \frac{\partial}{\partial t} (P_{t-s} \left[ u(\cdot,s) \right]^p) = \Delta (P_{t-s} \left[ u(\cdot,s) \right]^p)
\qquad \qquad \text{for all } t \in (0,t_*).
\end{equation}
Moreover, for all \( t,s \) such that \( 0 \leq s \leq t < t_* \), the following bound follows from \cref{lemma_property_HK_G} and \eqref{bound_u_p_Duhamel}:
\[
\begin{aligned}
|F(t,s)| & = \left| (P_{t-s} \, \left[ u(\cdot,s) \right]^p)(x) \right| \\
& \leq \left\| P_{t-s} \, \left[ u(\cdot,s) \right]^p \right\|_\infty \\
& \leq \left\| \left[ u(\cdot,s) \right]^p \right\|_\infty \\
& = \left[ \left\| u(\cdot,s) \right\|_\infty \right]^p \\
& \leq \left[ \| u \|_{L^\infty(X \times [0,t_*))} \right]^p \\
& < +\infty.
\end{aligned}
\]
\noindent We now claim that it holds:
\begin{equation}\label{Leibniz_rule_derivation}
\frac{d}{dt} \, \int_0^t \, F(t,s) \, ds = F(t,t) + \int_0^t \, \frac{\partial}{\partial t} \, F(t,s) \, ds, \qquad \text{for all } t \in (0,t_*).
\end{equation}
First, we notice that the integral appearing at the left-hand side of \eqref{Leibniz_rule_derivation} is well-defined for every fixed \( t \in (0,t_*) \), in view of the last uniform bound on the function \( F \). Now, in order to justify rigorously the differentiation under the integral sign,
we provide a uniform bound on the partial derivative with respect to $t$ of the
function $F$.

\medskip
\noindent Again, let us fix a vertex \( x \in X \) and recall that, for $0 \le s \le t < t^*$, \eqref{partial_time_derivative_F} holds, namely:
\[
\frac{\partial}{\partial t} F(t,s)
= \Delta\big(P_{t-s}[u(\cdot,s)]^p\big)(x).
\]
In addition, it is easily seen that the function \( P_{t-s}[u(\cdot,s)]^p \) is bounded; indeed, from \eqref{P_t_s_bound_1} it immediately follows that
\begin{equation}\label{P_t_s_bound_2}
\| P_{t-s} \, \left[ u(\cdot,s) \right]^p \|_\infty = \sup_{x \in X} \, \left| (P_{t-s} \, \left[ u(\cdot,s) \right]^p)(x) \right| \leq \left[ \|u\|_{L^\infty(X \times [0,t_*))} \right]^p < +\infty.
\end{equation}
\noindent Since the graph is locally finite, the Laplacian at the vertex $x$ is well-defined as
a finite sum over the neighbors of $x$. Therefore, for every bounded function $g \in \ell^\infty(X)$, we have
\[
|\Delta g(x)| = \left| \frac{1}{\mu(x)} \sum_{\substack{z \in X \\ z \sim x}} \omega(x,z) \left[ g(z) - g(x) \right] \right|
\le \frac{1}{\mu(x)} \sum_{\substack{z \in X \\ z \sim x}} \omega(x,z) \left[ |g(z)| + |g(x)| \right]
\le 2 \, \mathrm{Deg}(x) \, \|g\|_\infty.
\]
Applying this estimate to the bounded function \( g = P_{t-s}[u(\cdot,s)]^p \) and exploiting both \eqref{partial_time_derivative_F} and \eqref{P_t_s_bound_2},
we obtain, for $0 \le s \le t < t^*$:
\[
\left| \frac{\partial}{\partial t} F(t,s) \right| = \left| \Delta\big(P_{t-s}[u(\cdot,s)]^p\big)(x) \right|
\le 2\,\mathrm{Deg}(x) \, \| P_{t-s}[u(\cdot,s)]^p \|_\infty
\le 2 \, \mathrm{Deg}(x) \left[ \|u\|_{L^\infty(X \times [0,t_*))} \right]^p.
\]
\noindent The right-hand side is independent of $s$ and belongs to $L^1(0,t)$ for every fixed $t \in (0,t^*)$.
Hence, by a standard differentiation theorem under the integral sign (Leibniz rule with dominated derivative),
we conclude that \eqref{Leibniz_rule_derivation} holds.

\medskip
\noindent Now, note that for all \( t \in (0,t_*) \) it holds:
\[
F(t,t) = (P_0 \, \left[ u(\cdot,t) \right]^p)(x) = \left[ u(x,t) \right]^p,
\]
since \( P_0 \) coincides with the identity operator. On the other hand, from \eqref{partial_time_derivative_F} it follows, for all \( t \in (0,t_*) \):
\[
\int_0^t \, \frac{\partial}{\partial t} \, F(t,s) \, ds = \int_0^t \, \Delta (P_{t-s} \left[ u(\cdot,s) \right]^p)(x) \, ds
= \Delta \left( \int_0^t \, P_{t-s} \left[ u(\cdot,s) \right]^p \, ds \right) (x),
\]
where the last equality is due to an interchange between the time integral and the sum defining the involved Laplacian term, which is finite thanks to the hypothesis of local finiteness of the graph. We can now insert the last two identities inside \eqref{Leibniz_rule_derivation} and exploit the definition of \( F(\cdot,\cdot) \), in order to obtain, for all \( x \in X \) and for each \( t \in (0,t_*) \):
\[
\frac{d}{d t} \, \int_0^t \, (P_{t-s} \, \left[ u(\cdot,s) \right]^p)(x) \, ds = \left[ u(x,t) \right]^p + \Delta \left( \int_0^t \, P_{t-s} \left[ u(\cdot,s) \right]^p \, ds \right) (x).
\]

\medskip
\noindent By combining the results obtained in \textbf{(I)} and \textbf{(II)}, and by exploiting the definition of the function \( v \), namely \eqref{definition_function_v_instrumental}, we get:
\[
\begin{aligned}
\frac{\partial}{\partial t} \, v(\cdot,t) & = \frac{\partial}{\partial t} \, (P_t \, u_0) + \frac{\partial}{\partial t} \, \int_0^t \, P_{t-s} \, \left[ u(\cdot,s) \right]^p \, ds \\
& = \Delta \left( P_t \, u_0 \right) + \left[ u(\cdot,t) \right]^p + \Delta \left( \int_0^t \, P_{t-s} \left[ u(\cdot,s) \right]^p \, ds \right) \\
& = \left[ u(\cdot,t) \right]^p + \Delta \left( P_t \, u_0 + \int_0^t \, P_{t-s} \left[ u(\cdot,s) \right]^p \, ds \right) \\
& = \left[ u(\cdot,t) \right]^p + \Delta \left[ v(\cdot,t) \right],
\end{aligned}
\]
holding pointwise in \( X \), for all \( t \in (0,t_*) \). Now, by the arbitrariness of \( t_* \in (0,+\infty) \), we conclude:
\[
v_t = u^p + \Delta \, v \qquad \quad \text{pointwise in } X \times (0,+\infty).
\]
Moreover, since \( P_0 \) corresponds to the identity operator, from \eqref{definition_function_v_instrumental} we also infer:
\[
v(\cdot,0) = P_0 \, u_0 = u_0 \qquad \quad \text{pointwise in } X.
\]
Since \( u \) is, by assumption, a global solution of problem \eqref{problem_HR_G}, we can define
\[
w := u - v \qquad \text{in } X \times [0,+\infty),
\]
so that we get
\[
w_t = u_t - v_t = \Delta \, u + u^p - \left[ u^p + \Delta \, v \right] = \Delta \, u - \Delta \, v = \Delta \left( u - v \right) = \Delta \, w,
\]
pointwise in \( X \times (0,+\infty) \), and also
\[
w(\cdot,0) = u(\cdot,0) - v(\cdot,0) = u(\cdot,0) - u(\cdot,0) \equiv 0,
\]
pointwise in \( X \).

\medskip
\noindent In conlusion, \( w \) solves \eqref{problem_H_G}, namely the Cauchy problem associated with the heat equation in the graph setting, with the trivial initial datum. Thanks to the stochastic completeness of the graph, we can then invoke \cref{theorem_well_posedness_H_G} in order to conclude that
\[
w = P_t \, w(\cdot,0) \equiv 0 \qquad \quad \text{pointwise in } X \times (0,+\infty),
\]
meaning that \( u \equiv v \) on \( X \times (0,+\infty) \). In particular, \eqref{definition_function_v_instrumental} implies that
\[
u(x,t) = v(x,t) = (P_t \, u_0)(x) + \int_0^t \left( P_{t-s} \, \left[u(\cdot,s) \right]^p \right)(x) \, ds \hspace{2.5em} \text{for all } \hspace{0.1em} (x,t) \in \hspace{0.05em} X \times (0,+\infty),
\]
so that \eqref{eq:Duhamel_G} has been shown to hold pointwise in \( X \times (0,+\infty) \). In addition, since \( u \) is assumed to be a global classical solution to problem \eqref{problem_HR_G}, then it is defined pointwise on \( X \times [0,+\infty) \) and it is nonnegative; moreover, from \eqref{reg_sol_2_G} we know that \( u \in L^\infty(X \times [0,\tau]) \) for all \( \tau \in (0,+\infty) \). Hence it is straightforward to verify that \( u \in L^\infty([0,\tau]; \ell^\infty(X)) \) for all \( \tau \in (0,+\infty) \). In conclusion, \( u \) satisfies all the requirements of \cref{definition_mild_sol_sub_super_sol} for a global mild solution to problem \eqref{problem_HR_G}. This ends the proof. \hfill \( \square \)

\bigskip
Before proving Proposition \ref{proposition_mild_implies_classical_global_general_graph}, we need the following lemma, whose proof requires the use of a result contained in \cite{book_Pazy}.

\begin{lemma}\label{lemma2}
Let $(X,\omega,\mu)$ be a connected and locally finite weighted graph satisfying \eqref{bounded_weighted_degree_2}. Let $T \in (0,+\infty]$ and let
\[
u : X \times [0,T) \to [0,+\infty)
\]
be a mild solution of problem \eqref{problem_HR_G} on $[0,\tau]$, in the sense of \emph{\cref{definition_mild_sol_sub_super_sol}}, for every \( \tau \in (0,T) \).
Then
\[
u \in C([0,T); \ell^\infty(X)).
\]
\end{lemma}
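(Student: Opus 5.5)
The plan is to show first that, under \eqref{bounded_weighted_degree_2}, the Laplacian is a bounded operator on $\ell^\infty(X)$. Then, invoking the classical result in \cite{book_Pazy}, the heat semigroup coincides with $e^{t\Delta}$ and is uniformly continuous. The mild formula will then be split into its linear part and its Duhamel part, and each will be estimated in $\ell^\infty$ norm.

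\textbf{Step 1 (boundedness of $\Delta$).} For $g\in\ell^\infty(X)$ the estimate already used in the proof of \cref{proposition_Duhamel_formula_HR_G} gives $|\Delta g(x)|\le 2\operatorname{Deg}(x)\|g\|_\infty\le 2D\|g\|_\infty$. Hence $\Delta\in\mathcal L(\ell^\infty(X))$ with $\|\Delta\|_{\mathcal L}\le 2D$.

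\textbf{Step 2 (identification of $P_t$ with $e^{t\Delta}$).} By \cite{book_Pazy}, a bounded generator gives the uniformly continuous semigroup $e^{t\Delta}=\sum_k t^k\Delta^k/k!$, which satisfies $\|e^{t\Delta}-I\|_{\mathcal L}\le e^{2Dt}-1$. To identify it with $P_t$, fix $v_0\in\ell^\infty$. Then $t\mapsto e^{t\Delta}v_0$ is a bounded classical solution of \eqref{problem_H_G}, and $P_tv_0$ is also one by \cref{theorem_well_posedness_H_G}. It remains to check uniqueness of bounded solutions. Bounded degree implies stochastic completeness, but I would rather avoid citing that. Instead I would argue directly: the difference $w$ of the two solutions satisfies $\|w(t)\|_\infty\le \int_0^t 2D\|w(s)\|_\infty ds$, since $w(x,t)=\int_0^t\Delta w(x,s)\,ds$ pointwise. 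Gronwall then gives $w\equiv0$. The sup norm is only measurable in time, but it is bounded, which suffices. I expect this identification to be the main obstacle, because it requires care with the pointwise-in-time versus norm-in-time formulations.

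\textbf{Step 3 (continuity of the mild solution).} Fix $\tau<T$ and set $M:=\|u\|_{L^\infty([0,\tau];\ell^\infty)}$. For $0\le t<t+h\le\tau$, write
\[
u(t+h)-u(t)=(P_{t+h}-P_t)u_0+\int_t^{t+h}P_{t+h-s}u(s)^p\,ds+\int_0^t(P_{t+h-s}-P_{t-s})u(s)^p\,ds .
\]
The three terms are handled as follows.
\begin{itemize}
\item The first term equals $P_t(P_h-I)u_0$, so its norm is at most $\|P_h-I\|_{\mathcal L}\|u_0\|_\infty$.
\item The second term has norm at most $hM^p$ by \eqref{L_norm_P_t}.
\item The third term equals $(P_h-I)\int_0^tP_{t-s}u(s)^p\,ds$, with norm at most $\|P_h-I\|_{\mathcal L}\,tM^p$. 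Here the bounded operator $P_h$ is commuted with the time integral. This is legitimate because the integral is taken pointwise and $P_h$ acts as a series with summable kernel, so Fubini applies with the domination $M^p$.
\end{itemize}
All three bounds tend to $0$ as $h\to0$ uniformly in $t$, which gives continuity from the right. Left increments are handled the same way with $t-h$ in place of $t$. Since $\tau<T$ is arbitrary, this yields $u\in C([0,T);\ell^\infty(X))$.
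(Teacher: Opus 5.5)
Your proof is correct and follows essentially the same route as the paper. You show $\Delta$ is bounded on $\ell^\infty(X)$, obtain uniform continuity of the semigroup from \cite{book_Pazy}, and split the mild formula into $(P_{t+h}-P_t)u_0$, a short-interval Duhamel piece bounded by $hM^p$, and a difference term controlled by $\|P_h-I\|_{\mathcal L}$. Two of your choices improve on the paper: your Step 2 checks $P_t=e^{t\Delta}$ via the Gronwall uniqueness argument, which the paper skips when it calls the heat-kernel semigroup ``the semigroup generated by $\Delta$''. Pulling $(P_h-I)$ out of the Duhamel integral, instead of using dominated convergence, gives you a modulus of continuity uniform in $t$ and hence two-sided continuity at once.
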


\medskip
\noindent \emph{Proof.} In order to ease the notation, throughout this proof we shall replace \( u(\cdot,t) \) simply with \( u(t) \), for every involved time instant \( t \); moreover, we will denote the norm on \( \mathcal{L}(\ell^\infty(X)) \) with \( \| \cdot \|_{\mathcal{L}} \).

\medskip
\noindent First, let us fix $\tau \in (0,T)$. By assumption,
\[
M_\tau := \|u\|_{L^\infty([0,\tau];\ell^\infty(X))} < +\infty,
\]
\noindent where this quantity only depends on \( \tau \). Now, let us consider the operator \( F: \ell^\infty(X) \to \ell^\infty(X) \) defined as
\begin{equation}\label{operator_F}
F(v) := |v|^p \qquad \quad \text{for all } v \in \ell^\infty(X).
\end{equation}
First, it is trivial to notice that \( F \) actually maps \( \ell^\infty(X) \) into itself; indeed, for any \( v \in \ell^\infty(X) \) we have:
\[
\| F(v) \|_\infty = \| |v|^p \|_\infty \equiv \| v^p \|_\infty = \left[ \| v \|_\infty \right]^p < +\infty.
\]
Moreover, since \( u \) is nonnegative and belongs to the space \( L^\infty([0,\tau];\ell^\infty(X)) \), then it follows that \( u(s) \in \ell^\infty(X) \) for a.e. \( s \in (0,\tau) \), and we have:
\begin{equation}\label{F_u_s_1}
F(u(s)) = [u(s)]^p \quad \text{ and } \quad \| F(u(s)) \|_\infty = \left[ \| u(s) \|_\infty \right]^p \le M_\tau^p,
\quad \text{ for a.e. } s\in(0,\tau).
\end{equation}
\noindent Because of \eqref{bounded_weighted_degree_2}, the Laplacian $\Delta$ is a bounded linear operator on $\ell^\infty(X)$. In fact, we first notice that, since the graph is assumed to be locally finite, then \( \Delta \) is well-defined in the whole space \( C(X) \), thanks to \cref{remark_locally_finite_laplacian}. Now, for all \( v \in \ell^\infty(X) \) and for each \( x \in X \), we can write:
\[
\begin{aligned}
|(\Delta v)(x)| & = \left| \frac{1}{\mu(x)} \, \sum_{y\in X} \left[ v(y)-v(x) \right] \omega(x,y) \right| \\
& \le \frac{1}{\mu(x)} \, \sum_{y\in X} \, \left[ |v(y)| + |v(x)| \right] \omega(x,y) \\
& \le \frac{2 \, \|v\|_\infty}{\mu(x)} \, \sum_{y\in X} \, \omega(x,y) \\
& = 2 \, \operatorname{Deg}(x) \, \|v\|_\infty \\
& \le 2 D \, \|v\|_\infty,
\end{aligned}
\]
where we have exploited the positivity of \( \mu \) and the nonnegativity of \( \omega \), together with the assumption \eqref{bounded_weighted_degree_2} regarding the boundedness of the weighted degree. Now, taking the supremum over \( x \in X \) in the previous estimates yields the following:
\[
\| \Delta \, v \|_\infty \le 2 D \, \|v\|_\infty \qquad \quad \text{for all } v \in \ell^\infty(X),
\]
hence the operator \( \Delta \), which is clearly linear, maps \( \ell^\infty(X) \) into itself. In particular, it is a bounded linear operator, namely \( \Delta \in \mathcal{L}(\ell^\infty(X)) \).

\medskip
\noindent We can then invoke {Theorem 1.2} from {Section 1.1} of \cite{book_Pazy}, which yields that the semigroup \( \{P_t\}_{t\ge 0} \) on \( \ell^\infty(X) \) generated by the bounded operator \( \Delta \) on the Banach space \( \ell^\infty(X) \) is \emph{uniformly continuous}. Moreover, since the graph is assumed to be connected and locally finite, \cref{remark_properties_SG_connected_locally_finite} ensures that the heat semigroup is both positivity preserving and \( \ell^\infty \)-contractive. In particular, thanks to \cref{remark_uniformly_strongly_continuous_HK}, we infer that \( \{P_t\}_{t\ge 0} \) is of class \( C_0 \), namely \emph{strongly continuous} on \( \ell^\infty(X) \).

\medskip
\noindent Now, let us fix an arbitrary time instant \( t \geq 0 \) and a generic function \( f \in \ell^\infty(X) \). Then, by combining the property \eqref{identity_P_t_P_s} with the \( \ell^\infty \)-contractivity of the heat semigroup, for every \( h \geq 0 \) we obtain:
\[
\| P_{t+h} \, f - P_t \, f \|_\infty = \| P_t \left[ \left( P_h - I \right) f \right] \|_\infty \leq \| \left( P_h - I \right) f \|_\infty \leq \| P_h - I \|_{\mathcal{L}} \| f \|_\infty,
\]
where the last inequality easily follows from the definition of the norm \( \| \cdot \|_{\mathcal{L}} \). In particular, thanks to the uniform continuity of the heat semigroup, it follows that \( P_{t+h} \, f \to P_t \, f \) in \( \ell^\infty(X) \), as \( h \to 0^+ \), so that the map \( r \mapsto P_r f \) is continuous at \( t \) from the right. Analogously, after fixing \( t > 0 \) and choosing an arbitrary \( h \in (0,t] \), by arguing as before we get:
\[
\| P_t \, f - P_{t-h} \, f \|_\infty
= \| P_{t-h} \left[ \left( P_h - I \right) f \right] \|_\infty
\leq \| \left( P_h - I \right) f \|_\infty
\leq \| P_h - I \|_{\mathcal{L}} \| f \|_\infty,
\]
so that \( P_{t-h} \, f \to P_t \, f \) in \( \ell^\infty(X) \), as \( h \to 0^+ \), meaning that the map \( r \mapsto P_r f \) is continuous at \( t \) from the left. Therefore, thanks to the arbitrariness of \( t \), we conclude that
\[
t \mapsto P_t \, f
\quad \text{is continuous } \text{on } [0,+\infty) \text{ in } \ell^\infty(X),
\qquad \text{for every } f \in \ell^\infty(X).
\]
In particular, the map \( t \mapsto P_t \, u_0 \) is continuous on \( [0,+\infty) \) in \( \ell^\infty(X) \).

\medskip
\noindent Now, by the mild formulation, and in particular by \eqref{eq:nonlinear-mild}, we have, for each $t\in[0,\tau]$,
\begin{equation}\label{mild_u_F}
u(t) = P_t u_0 + \int_0^t P_{t-s}F(u(s))\,ds,
\end{equation}
where the equality exploits \eqref{F_u_s_1}. Here, as specified in \cref{remark_mild_well_posed}, the integral is understood pointwise in $X$.

\medskip
\noindent We now define the map \( G : [0,\tau] \to \ell^\infty(X) \) as
\[
G(t) := \int_0^t P_{t-s} F(u(s)) \, ds \quad \qquad \text{for all } t \in [0,\tau],
\]
and we first observe that it actually defines an element of $\ell^\infty(X)$ for every $t\in[0,\tau]$, since
\[
\left\| G(t) \right\|_\infty
\leq \int_0^t \left\| P_{t-s} F(u(s)) \right\|_\infty \, ds
\leq \int_0^t \left\| F(u(s)) \right\|_\infty \, ds
\leq t \, M_\tau^p < +\infty,
\]
where we exploited the definition of \( G \), together with the \( \ell^\infty \)-contractivity of the heat semigroup and the bound given by \eqref{F_u_s_1}.

\medskip
\noindent Now, let \( t \in [0,\tau] \) and \( \{ t_n \}_{n \in \mathbb{N}} \subseteq [0,\tau] \) be a nonincreasing sequence such that $t_n \geq t$ for all \( n \in \mathbb{N} \) and \( t_n \downarrow t \) as \( n \to +\infty \). By exploiting the definition of the function \( G \), for all \( n \in \mathbb{N} \) we can write:
\begin{equation}\label{G_t_n_G_t_1}
G(t_n)-G(t) = \int_0^t \big( P_{t_n-s} - P_{t-s} \big) F(u(s)) \, ds + \int_t^{t_n} P_{t_n-s} F(u(s)) \, ds,
\end{equation}
where the operator \( P_{t_n-s} \) appearing inside the first integral is well-defined, since \( t_n \geq t \geq s \). We now estimate the \( \ell^\infty \)-norm of the two integral terms at the right-hand side of \eqref{G_t_n_G_t_1} separately, studying their asymptotic behavior as \( n \to +\infty \).

\medskip
\noindent The second term is bounded by
\[
\left\| \int_t^{t_n} P_{t_n-s} F(u(s)) \,ds \right\|_\infty \leq \int_t^{t_n} \left\| P_{t_n-s} F(u(s)) \right\|_\infty ds
\leq (t_n-t) M_\tau^p \xrightarrow[n \to +\infty]{} 0.
\]
Here, the last inequality is obtained by combining the \( \ell^\infty \)-contractivity of the heat semigroup with the estimate in \eqref{F_u_s_1}.

\medskip
\noindent For the first term we estimate
\begin{equation}\label{bound_DCT_1}
\left\| \int_0^t (P_{t_n-s} - P_{t-s}) F(u(s)) \, ds \right\|_\infty
\le \int_0^t \| P_{t_n-s} - P_{t-s} \|_{\mathcal{L}} \, \| F(u(s)) \|_\infty \, ds.
\end{equation}
Now, for each fixed $s\in(0,t)$ we have:
\[
\| P_{t_n-s} - P_{t-s} \|_{\mathcal{L}} =
\| P_{t-s} \left[ P_{t_n-t} - I \right] \|_{\mathcal{L}}
\leq \| P_{t-s} \|_{\mathcal{L}} \, \| P_{t_n-t} - I \|_{\mathcal{L}}
\leq \| P_{t_n-t} - I \|_{\mathcal{L}}
\xrightarrow[n \to +\infty]{} 0,
\]
where we have exploited the definition of the operator norm \( \| \cdot \|_{\mathcal{L}} \), the two properties \eqref{identity_P_t_P_s} and \eqref{L_norm_P_t} and the uniform continuity of the heat semigroup. Hence, for a.e. \( s \in (0,t) \), the integrand function at the right-hand side of \eqref{bound_DCT_1} tends to zero as \( n \to +\infty \). Moreover, by combining the triangular inequality for the norm \( \| \cdot \|_{\mathcal{L}} \) with \eqref{L_norm_P_t} and \eqref{F_u_s_1}, we obtain, for a.e. $s\in(0,t)$ and for all \(  n \in \mathbb{N}\):
\[
\| P_{t_n-s} - P_{t-s} \|_{\mathcal{L}} \, \| F(u(s)) \|_\infty
\leq \left[ \| P_{t_n-s} \|_{\mathcal{L}} + \| P_{t-s} \|_{\mathcal{L}} \right] \| F(u(s)) \|_\infty
\leq 2 M_\tau^p,
\]
where the last term is independent of \( s \), hence integrable on \( (0,t) \). Therefore, by dominated convergence, the right-hand side of \eqref{bound_DCT_1} tends to $0$ as $n\to\infty$. In particular, from \eqref{G_t_n_G_t_1} we easily infer that $G(t_n) \to G(t)$ in $\ell^\infty(X)$, as \( n \to +\infty \), so that the function \( G \) is continuous at \( t \) from the right.

\medskip
\noindent Now, taking an arbitrary nondecreasing sequence \( \{ t_n \}_{n \in \mathbb{N}} \subseteq [0,\tau] \) such that $t_n \leq t$ for all \( n \in \mathbb{N} \) and \( t_n \uparrow t \) as \( n \to +\infty \), for all \( n \in \mathbb{N} \) we can rewrite \eqref{G_t_n_G_t_1} as:
\begin{equation}\label{G_t_n_G_t_2}
G(t_n) - G(t) = \int_0^{t_n} \big( P_{t_n-s} - P_{t-s} \big) F(u(s)) \, ds - \int_{t_n}^t P_{t-s} F(u(s)) \, ds,
\end{equation}
where the operator \( P_{t-s} \) appearing inside the first integral is well-defined, since \( t \geq t_n \geq s \).

\medskip
\noindent By arguing as before, the \( \ell^\infty \)-norm of the second integral term is bounded by
\[
\left\| \int_{t_n}^t P_{t-s} F(u(s)) \,ds \right\|_\infty
\leq \int_{t_n}^t \left\| P_{t-s} F(u(s)) \right\|_\infty ds
\leq (t - t_n) M_\tau^p
\xrightarrow[n \to +\infty]{} 0.
\]
\noindent For the first integral term we proceed similarly with respect to \eqref{bound_DCT_1}, obtaining:
\begin{equation}\label{bound_DCT_2}
\begin{aligned}
\left\| \int_0^{t_n} (P_{t_n-s} - P_{t-s}) F(u(s)) \, ds \right\|_\infty
& \leq \int_0^{t_n} \| P_{t_n-s} - P_{t-s} \|_{\mathcal{L}} \, \| F(u(s)) \|_\infty \, ds \\
& = \int_0^t \mathds{1}_{[0,t_n]}(s) \, \| P_{t-s} - P_{t_n-s} \|_{\mathcal{L}} \, \| F(u(s)) \|_\infty \, ds,
\end{aligned}
\end{equation}
where \( \mathds{1}_{(\cdot)} \) denotes the indicator function. Now, let us fix $s\in(0,t)$. Since \( t_n \uparrow t \) as \( n \to +\infty \), there exists \( N(s) \in \mathbb{N} \) such that \( t_n > s \) for all \( n \geq N(s) \). Thus, for every \( n \geq N(s) \), it holds \( \mathds{1}_{[0,t_n]}(s) = 1 \), and moreover the operator \( P_{t_n-s} \) appearing at the right-hand side of \eqref{bound_DCT_2} is well-defined, so that, arguing as before, we have:
\[
\| P_{t-s} - P_{t_n-s} \|_{\mathcal{L}} =
\| P_{t_n-s} \left[ P_{t-t_n} - I \right] \|_{\mathcal{L}}
\leq \| P_{t_n-s} \|_{\mathcal{L}} \, \| P_{t-t_n} - I \|_{\mathcal{L}}
\leq \| P_{t-t_n} - I \|_{\mathcal{L}}
\xrightarrow[n \to +\infty]{} 0.
\]
Hence, for all \( s \in (0,t) \) and \( n \geq N(s) \) it holds:
\[
\mathds{1}_{[0,t_n]}(s) \, \| P_{t-s} - P_{t_n-s} \|_{\mathcal{L}} \equiv \| P_{t-s} - P_{t_n-s} \|_{\mathcal{L}} \leq \| P_{t-t_n} - I \|_{\mathcal{L}},
\]
which implies that, for a.e. \( s \in (0,t) \), the integrand function at the right-hand side of \eqref{bound_DCT_2} tends to zero as \( n \to +\infty \). Moreover, by proceeding as before, for a.e. $s \in (0,t)$ and for each \(  n \in \mathbb{N}\) we have that the integrand function at the right-hand side of \eqref{bound_DCT_2} is either identically zero or bounded by
\[
\mathds{1}_{[0,t_n]}(s) \, \| P_{t-s} - P_{t_n-s} \|_{\mathcal{L}} \, \| F(u(s)) \|_\infty
\leq \left[ \| P_{t-s} \|_{\mathcal{L}} + \| P_{t_n-s} \|_{\mathcal{L}} \right] \| F(u(s)) \|_\infty
\leq 2 M_\tau^p.
\]
By dominated convergence, the right-hand side of \eqref{bound_DCT_2} then tends to zero as $n \to +\infty$. In particular, from \eqref{G_t_n_G_t_2} we easily infer that $G(t_n) \to G(t)$ in $\ell^\infty(X)$, as \( n \to +\infty \), so that the function \( G \) is continuous at \( t \) from the left.

\medskip
\noindent In conclusion, the map \( G \) is continuous at \( t \). Now, since \( t \in [0,\tau] \) is arbitrary, we infer that \( G \in C([0,\tau];\ell^\infty(X)) \).
Since, as previously obtained, the map $t\mapsto P_t u_0$ is continuous in $\ell^\infty(X)$, from \eqref{mild_u_F} we conclude that $t\mapsto u(t)$ is continuous in $\ell^\infty(X)$ on $[0,\tau]$. Finally, the arbitrariness of \( \tau \in (0,T) \) yields that \( u \in C([0,T);\ell^\infty(X)) \), which is the thesis. \hfill \( \square \)

\medskip
\medskip
\noindent \emph{Proof of \emph{\cref{proposition_mild_implies_classical_global_general_graph}}.} Let us consider an arbitrary \( u_0 \) satisfying \eqref{eq:HP_ID_G}, and let \( u \) be a global mild solution to problem \eqref{problem_HR_G}, associated with the initial datum \( u_0 \). Our goal is to verify that all the conditions in \cref{definition_sub_super_sols_G} are satisfied by \( u \), with \( T = +\infty \).

\medskip
\noindent First, we notice that under our hypotheses \cref{lemma2} can be applied. More specifically, by arguing exactly as in the proof of such result, we infer that \( \Delta \in \mathcal{L}(\ell^\infty(X)) \) and that the semigroup \( \{P_t\}_{t\ge 0} \) generated by the operator \( \Delta \) on \( \ell^\infty(X) \) satisfies all the properties in \cref{definition_S_1_S_2_semigroup}, namely it is positivity preserving, \( \ell^\infty \)-contractive, and both strongly and uniformly continuous on \( \ell^\infty(X) \).

\medskip
\noindent Let us now consider the operator \( F: \ell^\infty(X) \to \ell^\infty(X) \) defined as in \eqref{operator_F}, namely \( F(v) := |v|^p \) for all \( v \in \ell^\infty(X) \). As already noticed in the proof of \cref{lemma2}, it can be easily seen that \( F \) actually maps \( \ell^\infty(X) \) into itself. Moreover, after fixing a constant \( M > 0 \), for all \( w,v \in \ell^\infty(X) \) such \( \|w\|_\infty \le M \) and \( \|v\|_\infty \le M \) it holds:
\[
\begin{aligned}
\| F(w) - F(v) \|_\infty & = \| |w|^p - |v|^p\|_\infty \\
& = \sup_{x\in X} \, | |w(x)|^p - |v(x)|^p | \\
& \le p \, M^{p-1} \, \sup_{x\in X} \, | w(x) - v(x) | \\
& = p \, M^{p-1} \, \|w-v\|_\infty \\
& = L_M \, \|w-v\|_\infty,
\end{aligned}
\]
hence \( F \) is locally Lipschitz continuous on \( \ell^\infty(X) \), with Lipschitz constant \( L_M := p \, M^{p-1} \in (0,+\infty) \). In the previous passages, the inequality is a direct consequence of the following estimate, holding for all \( p>1 \):
\[
| |a|^p - |b|^p | \leq p \, \left( \max \left\{ |a|, |b| \right\} \right)^{p-1} |a - b| \qquad \quad \text{for all } a,b \in \mathbb{R},
\]
which can be easily inferred by applying the Mean Value Theorem to the function \( y \mapsto |y|^p \) with \( p > 1 \), defined on \( \mathbb{R} \).

\medskip
\noindent As already noticed, \( \{P_t\}_{t\geq0} \) is a \( C_0 \)-semigroup on \( \ell^\infty(X) \). Combining this fact with the local Lipschitz continuity of the map \( F \), we may invoke {Theorem 1.4} from {Section 6.1} of \cite{book_Pazy}, yielding that the integral equation
\[
v(t) = P_t \, u_0 + \int_0^t P_{t-s} \, \left[ F \left( v(s) \right) \right] \, ds \equiv P_t \, u_0 + \int_0^t P_{t-s} \, \big[ |v(s)| \big]^p \, ds
\]
admits a unique solution \( v \in C([0,t_{max});\ell^\infty(X)) \), up to a maximal existence time \( t_{max} \in (0,+\infty] \). However, by \cref{definition_mild_sol_sub_super_sol}, the global mild solution \( u \) satisfies the equation \eqref{eq:nonlinear-mild} pointwise in \( X \) and for all \( t \in (0,+\infty) \), namely:
\[
u(\cdot,t) = P_t \, u_0 + \int_0^t \, P_{t-s}  \left[ u(\cdot,s)^p \right] \, ds = P_t \, u_0 + \int_0^t P_{t-s} \, \left[ F \left( u(\cdot,s) \right) \right] \, ds,
\]
where the second equality is due to the nonnegativity of \( u \), together with the definition of \( F \). Moreover, \cref{lemma2} ensures that $u \in C([0,+\infty); \ell^\infty(X))$. Therefore, it follows that we must have \( u = v \), and in particular it holds \( t_{max} = +\infty \).

\medskip
\noindent At this point, we want to employ Theorem 1.5 from Section 6.1 of \cite{book_Pazy}, in order to infer some regularity properties of \( u \). This result can be applied: indeed, under our hypotheses \( \Delta \) is defined, as a linear bounded operator, on the whole Banach space \( \ell^\infty(X) \); furthermore, we already know that \( \{P_t\}_{t\geq0} \) is a \( C_0 \)-semigroup on \( \ell^\infty(X) \). By applying this theorem, we infer that \( u \in C([0,+\infty);\ell^\infty(X)) \, \cap \, C^1((0,+\infty);\ell^\infty(X)) \), and that this function is actually a global classical solution of the problem
\begin{equation}\label{pb_ell_inf}
\begin{cases}
u'(t) - \Delta u(t) = F(u(t)) = \left[ u(t) \right]^p \qquad \quad \text{for all } t \in (0,+\infty) \\
u(0) = u_0,
\end{cases}
\end{equation}
where the equalities are intended between elements of \( \ell^\infty(X) \). Again, the identity \( F(u(t)) = |u(t)|^p = \left[ u(t) \right]^p \) is due to the nonnegativity of \( u \).

\medskip
\noindent
In particular, it is worth observing that the regularity properties of \(u\) are sufficiently strong to ensure pointwise regularity in time for each vertex \(x \in X\). Indeed, we already know that \( u \in C([0,+\infty); \ell^\infty(X)) \cap C^1((0,+\infty); \ell^\infty(X)) \). Therefore, both the map \(t \mapsto u(t)\) and its time derivative in \( \ell^\infty(X) \) are continuous with respect to the norm \( \| \cdot \|_\infty \). Consequently, for any fixed \(x \in X\), the scalar function \(t \mapsto u(x,t)\) inherits the same properties: it is continuous on \([0,+\infty)\) and continuously differentiable on \((0,+\infty)\). In conclusion, we infer the following:
\[
u(x,\cdot) \in C^0([0,+\infty)) \, \cap \, C^1((0,+\infty)) \hspace{1.6em} \text{for all } x \in X.
\]
In other words, \( u \) satisfies the condition \eqref{reg_sol_1_G}, with \( T = +\infty \).

\medskip
\noindent
In order to pass from the equalities in \eqref{pb_ell_inf}, understood in $\ell^\infty(X)$, to a pointwise formulation,
we exploit the definition of derivative in the Banach space $\ell^\infty(X)$.
Since
\[
u \in C^1((0,+\infty);\ell^\infty(X)),
\]
it follows that, for every $t>0$,
\[
\left\|
\frac{u(\cdot,t+h)-u(\cdot,t)}{h} - u'(t)
\right\|_\infty \longrightarrow 0
\qquad \text{as } h \to 0.
\]
Fix now an arbitrary vertex $x \in X$. From the previous convergence we obtain, for all $t>0$:
\[
\left|
\frac{u(x,t+h)-u(x,t)}{h} - \big(u'(t)\big)(x)
\right|
\le
\left\|
\frac{u(\cdot,t+h)-u(\cdot,t)}{h} - u'(t)
\right\|_\infty
\xrightarrow[h \to 0]{} \text{ } 0.
\]
Therefore, for all $(x,t) \in X \times (0,+\infty)$,
\[
\frac{\partial}{\partial t} u(x,t)
= \lim_{h \to 0} \frac{u(x,t+h)-u(x,t)}{h}
= \big(u'(t)\big)(x).
\]
Since the identity
\[
u'(t) = \Delta u(t) + [u(t)]^p
\]
holds in $\ell^\infty(X)$ for all $t>0$, we conclude that
\[
\frac{\partial}{\partial t} u(x,t)
= \big(u'(t)\big)(x)
= \Delta u(x,t) + [u(x,t)]^p
\quad \qquad \text{for all } (x,t) \in X \times (0,+\infty).
\]
Moreover, from $u(0)=u_0$ in $\ell^\infty(X)$ we immediately deduce
\[
u(x,0)=u_0(x)
\quad \qquad \text{for all } x\in X.
\]
Hence $u$ satisfies problem \eqref{problem_HR_G} pointwise in $X \times (0,+\infty)$.

\medskip
\noindent Finally, let us fix an arbitrary time instant \( T' > 0 \). By definition, the global mild solution \( u \) is nonnegative and belongs to the space \( L^\infty([0,T']; \ell^\infty(X)) \), hence it is straightforward to verify that \( u \in L^\infty(X \times [0,T']) \). By the arbitrariness of \( T' > 0 \), we conclude that \( u \) satisfies also the condition \eqref{reg_sol_2_G}, with \( T = +\infty \). All the requirements in \cref{definition_sub_super_sols_G} are then verified by \( u \), with \( T = +\infty \). This ends the proof. \hfill \( \square \)

\bigskip
\noindent \emph{Proof of \emph{\cref{proposition_existence_between_barriers_mild}}.} For practical purposes, we divide the proof into sequential steps.

\medskip
\noindent \textbf{(I)} We first introduce an iterative scheme, starting from the subsolution \( \underline{u} \). More specifically, we set
\[
u^{(0)} := \underline{u},
\]
and for each $n \in \mathbb{N}_0$ we define $u^{(n+1)}: X \times [0,\tau] \to [0,+\infty) $ as
\begin{equation}\label{eq:iteration-def}
u^{(n+1)}(\cdot,t) := P_t \, u_0 + \int_0^t \, P_{t-s} \left[ u^{(n)}(\cdot,s)^p \right] \, ds \qquad \quad \text{for all } t \in [0,\tau].
\end{equation}
By definition of mild subsolution, it holds \( u^{(0)} = \underline{u} \in L^\infty([0,\tau]; \ell^\infty(X)) \). Now, arguing by induction, we assume that \( u^{(n)} \in L^\infty([0,\tau]; \ell^\infty(X)) \). After fixing \( t \in [0,\tau] \), this fact implies, for each \( s \in [0,t] \), that
\[
\left\| P_{t-s} \left[ u^{(n)}(\cdot,s)^p \right] \right\|_\infty \leq \left\| \left[ u^{(n)}(\cdot,s)^p \right] \right\|_\infty = \left[ \| u^{(n)}(\cdot,s) \|_\infty \right]^p
\leq \left[ \| u^{(n)} \|_{L^\infty([0,\tau]; \ell^\infty(X))} \right]^p,
\]
where the first inequality is due to the positivity of \( P_{t-s} \). Now, by exploiting \eqref{eq:iteration-def}, together with the positivity of \( P_t \) and the last estimate, we obtain:
\[
\begin{aligned}
\| u^{(n+1)}(\cdot,t) \|_\infty & \leq \left\| P_t \, u_0 \right\|_\infty + \left\| \int_0^t \, P_{t-s} \left[ u^{(n)}(\cdot,s)^p \right] \, ds \right\|_\infty \\
& \leq \| u_0 \|_\infty + \int_0^t \, \left\| P_{t-s} \left[ u^{(n)}(\cdot,s)^p \right] \right\|_\infty \, ds \\
& \leq \| u_0 \|_\infty + \tau \, \left[ \| u^{(n)} \|_{L^\infty([0,\tau]; \ell^\infty(X))} \right]^p,
\end{aligned}
\]
where we notice that the right-hand side is independent of \( t \). Therefore, we infer:
\[
\| u^{(n+1)} \|_{L^\infty([0,\tau]; \ell^\infty(X))} = \sup_{t \in [0,\tau]} \, \| u^{(n+1)}(\cdot,t) \|_\infty \leq \| u_0 \|_\infty + \tau \, \left[ \| u^{(n)} \|_{L^\infty([0,\tau]; \ell^\infty(X))} \right]^p < +\infty,
\]
meaning that \( u^{(n+1)} \in L^\infty([0,\tau]; \ell^\infty(X)) \). In conclusion, we have \( u^{(n)} \in L^\infty([0,\tau]; \ell^\infty(X)) \) for all \( n \in \mathbb{N}_0 \). In particular, it is straightforward to verify that all the functions in the iterative scheme are actually well-defined on \( X \times [0,\tau] \).

\medskip
\noindent The nonnegativity of such functions is due to another induction reasoning. In particular, we exploit the fact that, by definition of mild subsolution, we have \( u^{(1)} = \underline{u} \geq 0 \). Moreover, assuming that \( u^{(n)} \) is nonnegative, the positivity of the heat semigroup and the nonnegativity of \( u_0 \) yield that \( u^{(n+1)} \geq 0 \), thanks to \eqref{eq:iteration-def}. In conclusion, for every \( n \in \mathbb{N}_0 \), the function \( u^{(n)} \) is well-defined from \( X \times [0,\tau] \) to \( [0,+\infty) \), and it belongs to the space \( L^\infty([0,\tau]; \ell^\infty(X)) \).

\medskip
\noindent \textbf{(II)} We now show by induction that
\begin{equation}\label{eq:monotone-increasing}
u^{(n)}(\cdot,t) \;\le\; u^{(n+1)}(\cdot,t) \qquad \quad \text{for all } t \in [0,\tau], \hspace{0.1em} n \in \mathbb{N}_0.
\end{equation}
For $n=0$, since $\underline{u}$ is a mild subsolution, for all \( t \in [0,\tau] \) we have:
\[
\begin{aligned}
u^{(0)}(\cdot,t) & = \underline{u}(\cdot,t) \\
& \leq P_t \, u_0 + \int_0^t \, P_{t-s} \left[ \underline{u}(\cdot,s)^p \right] \, ds
\\
& = P_t \, u_0 + \int_0^t \, P_{t-s} \left[ u^{(0)}(\cdot,s)^p \right] \, ds \\
& = u^{(1)}(\cdot,t),
\end{aligned}
\]
where the last equality exploits \eqref{eq:iteration-def}. Now, assume $u^{(n)} \le u^{(n+1)}$. Because the map $x \mapsto x^p$ is monotone increasing on $[0,\infty)$ and each function $u^{(k)}$ is nonnegative by construction, we have, for a fixed \( t \in [0,\tau] \):
\[
\left[ u^{(n)}(\cdot,s) \right]^p \;\le\; \left[ u^{(n+1)}(\cdot,s) \right]^p \qquad \quad \text{for all } s \in [0,t].
\]
Applying the operator $P_{t-s}$, which is positivity preserving, and integrating, we get
\[
P_t \, u_0 + \int_0^t P_{t-s} \left[ u^{(n)}(\cdot,s)^p \right] \, ds \;\le\; P_t \, u_0 + \int_0^t \, P_{t-s} \left[ u^{(n+1)}(\cdot,s)^p \right] \, ds.
\]
In other words, thanks to \eqref{eq:iteration-def}, it holds:
\[
u^{(n+1)}(\cdot,t) \;\le\; u^{(n+2)}(\cdot,t) \qquad \quad \text{for all } t \in [0,\tau].
\]
Thus \eqref{eq:monotone-increasing} holds, meaning that $\{u^{(n)}\}_{n\ge0}$ is a pointwise nondecreasing sequence on $X \times [0,\tau]$.

\medskip
\noindent \textbf{(III)} We now show by induction that
\begin{equation}\label{eq:dominated-above}
u^{(n)}(\cdot,t) \;\le\; \overline{u}(\cdot,t) \qquad \quad \text{for all } t \in [0,\tau], \hspace{0.1em} n \in \mathbb{N}_0.
\end{equation}
For $n=0$, this fact is true by the hypothesis $u^{(0)} = \underline{u} \le \overline{u}$. Now, let us assume $u^{(n)} \le \overline{u}$. Again, since $x \mapsto x^p$ is monotone increasing on $[0,\infty)$, and each function $u^{(k)}$ is nonnegative by construction, we have, for a fixed \( t \in [0,\tau] \):
\[
\left[ u^{(n)}(\cdot,s) \right]^p \;\le\; \left[ \overline{u}(\cdot,s) \right]^p \qquad \quad \text{for all } s \in [0,t].
\]
Applying the operator $P_{t-s}$, which is positivity preserving, and integrating, we get
\[
u^{(n+1)}(\cdot,t) = P_t \, u_0 + \int_0^t P_{t-s} \left[ u^{(n)}(\cdot,s)^p \right] \, ds \;\le\; P_t \, u_0 + \int_0^t \, P_{t-s} \left[ \overline{u}(\cdot,s)^p \right] \, ds \leq \overline{u}(\cdot,t),
\]
where the last inequality holds since $\overline{u}$ is a mild supersolution.
Therefore, we infer:
\[
u^{(n+1)}(\cdot,t) \;\le\; \overline{u}(\cdot,t) \qquad \quad \text{for all } t \in [0,\tau],
\]
so that \eqref{eq:dominated-above} holds. As a consequence of the results obtained up to now, we have:
\begin{equation}\label{ordering_inequalities}
0 \;\le\; \underline{u}(\cdot,t) \;\le\; u^{(n)}(\cdot,t) \;\le\; \overline{u}(\cdot,t) \qquad \quad \text{for all } t \in [0,\tau], \hspace{0.1em} n \in \mathbb{N}_0.
\end{equation}

\medskip
\noindent \textbf{(IV)} We know that, for each \( (x,t) \in X \times [0,\tau] \), the sequence \( \left\{ u^{(n)}(x,t) \right\} \) is nondecreasing, hence we can define the pointwise limit
\[
u(x,t) := \lim_{n\to\infty} u^{(n)}(x,t) \qquad \quad \text{for all } (x,t) \in X \times [0,\tau].
\]
Passing to the limit as \( n \to +\infty \) in \eqref{ordering_inequalities} yields the following:
\begin{equation}\label{ordering_u}
0 \;\le\; \underline{u}(\cdot,t) \;\le\; u(\cdot,t) \;\le\; \overline{u}(\cdot,t) \qquad \quad \text{for all } t \in [0,\tau],
\end{equation}
which also implies that \( u \in L^\infty([0,\tau];\ell^\infty(X)) \), since \( \overline{u} \) belongs to this space by definition.

\medskip
\noindent \textbf{(V)} We are only left with showing that the function $u$ satisfies the mild equation \eqref{eq:nonlinear-mild}. In order to do so, after fixing \( t \in [0,\tau] \), we let \( n \to +\infty \) inside the recursive relation \eqref{eq:iteration-def}, namely
\[
u^{(n+1)}(\cdot,t) = P_t \, u_0 + \int_0^t \, P_{t-s} \left[ u^{(n)}(\cdot,s)^p \right] \, ds.
\]
Let us fix \(x\in X\), \(t \in [0,\tau]\), and \(s\in[0,t]\). Consider the nondecreasing sequence \(u^{(n)}(\cdot,s) \), converging to \( u(\cdot,s)\) in a monotone way, and define, for each \(n \in \mathbb{N}_0\),
\[
f_n(y):=p(x,y,t-s) \left[u^{(n)}(y,s)\right]^{p} \qquad \quad \text{for all } y\in X,
\]
with \( p \) denoting the heat kernel. Notice that \( p \) is nonnegative, since the heat semigroup is assumed to be positivity preserving. Now, thanks to the nonnegativity of the functions \( u^{(n)} \), we infer that \( f_n \geq 0 \). Moreover, the monotone convergence of \(u^{(n)}(\cdot,s) \) to \( u(\cdot,s)\) implies that \( \left\{f_n\right\} \) is monotone nondecreasing and converges to the function \( f \), defined as
\[
f(y):=p(x,y,t-s) \left[u(y,s) \right]^{p} \qquad \quad \text{for all } y\in X.
\]
We can then apply the Monotone Convergence Theorem on the measure space \((X,\mathcal P(X),\mu)\), obtaining:
\[
\lim_{n \to +\infty} \, \sum_{y\in X} p(x,y,t-s) \left[ u^{(n)}(y,s) \right]^{p} \, \mu(y) = \sum_{y\in X} p(x,y,t-s) \left[ u(y,s) \right]^{p}\,\mu(y).
\]
By the arbitrariness of \( x \in X \), we get the following identity, holding pointwise in \( X \):
\begin{equation}\label{first_MCT_semigroup}
\lim_{n \to +\infty} \, P_{t-s} \left[ u^{(n)}(\cdot,s)^{p} \right] = P_{t-s} \left[ u(\cdot,s)^{p} \right].
\end{equation}
Now, under our assumptions, for all \( s \in [0,t] \) it holds:
\[
0 \le P_{t-s} \left[u^{(n)}(\cdot,s)^p\right] \le P_{t-s} \left[ \overline{u}(\cdot,s)^p \right]
\le \left\| P_{t-s} \left[ \overline{u}(\cdot,s)^p \right] \right\|_\infty \le \|\overline{u}(\cdot,s)^p\|_\infty = \left[ \|\overline{u}(\cdot,s)\|_\infty \right]^p.
\]
In particular, we obtain:
\[
0 \le P_{t-s} \left[u^{(n)}(\cdot,s)^p\right] \le \left[ \|\overline{u}(\cdot,s)\|_\infty \right]^p \leq \left[ \| \overline{u} \|_{L^\infty([0,\tau];\ell^\infty(X))} \right]^p,
\]
and the last bound is independent of $s$, hence integrable over the finite time interval \( [0,t] \). This fact, together with \eqref{first_MCT_semigroup}, allows us to apply the Dominated Convergence Theorem, yielding the following:
\[
\lim_{n \to +\infty} \, \int_0^t \, P_{t-s} \left[ u^{(n)}(\cdot,s)^p \right] \, ds = \int_0^t \, P_{t-s} \left[ u(\cdot,s)^p \right] \, ds.
\]
Therefore, by exploiting \eqref{eq:iteration-def}, we infer:
\[
\lim_{n \to +\infty} \, u^{(n+1)}(\cdot,t) = P_t \, u_0 + \lim_{n \to +\infty} \, \int_0^t \, P_{t-s} \left[ u^{(n)}(\cdot,s)^p \right] \, ds
= P_t \, u_0 + \int_0^t \, P_{t-s} \left[ u(\cdot,s)^p \right] \, ds,
\]
holding for all \( t \in [0,\tau] \). We now notice that the left-hand side of the obtained identity corresponds exactly to $u(\cdot,t)$, by definition of $u$. We conclude that, for all $t \in [0,\tau]$, it holds:
\[
u(\cdot,t) = P_t \, u_0 + \int_0^t \, P_{t-s} \left[ u(\cdot,s)^p \right] \, ds,
\]
which is \eqref{eq:nonlinear-mild}. In particular, since \( P_0 \) coincides with the identity operator, we also infer that \( u(\cdot,0) = u_0 \). By combining these results with the fact that \( u \in L^\infty([0,\tau];\ell^\infty(X)) \), we conclude that $u$ is a mild solution to problem \eqref{problem_HR_G}, with initial datum \( u_0 \), on \( [0,\tau] \). Finally, thanks to \eqref{ordering_u}, the proof is complete. \hfill \( \square \)

\medskip
\begin{remark}\label{remark_proposition_existence_mild_ordered}
\noindent From the statement of \emph{\cref{proposition_existence_between_barriers_mild}}, it is trivial to infer that, given an initial datum \( u_0 \) for problem \eqref{problem_HR_G} satisfying \eqref{eq:HP_ID_G}, if there exist a global mild subsolution \( \underline{u} \) and a global mild supersolution \( \overline{u} \) of problem \eqref{problem_HR_G} such that
\[
\underline{u}(\cdot,t) \leq \overline{u}(\cdot,t) \qquad \quad \text{for all } \hspace{0.05em} t \in [0, +\infty),
\]
\noindent then the existence of a global mild solution \( u \) is guaranteed. In particular, in this scenario, it holds:
\[
\underline{u}(\cdot,t) \leq u(\cdot,t) \leq \overline{u}(\cdot,t) \qquad \quad \text{for all } \hspace{0.05em} t \in [0, +\infty).
\]
\end{remark}

\bigskip
\noindent \emph{Proof of \emph{\cref{proposition_summarize_global_existence}}.} The thesis directly follows by combining \cref{remark_properties_SG_connected_locally_finite}, \cref{proposition_existence_between_barriers_mild}, \cref{remark_proposition_existence_mild_ordered}, and \cref{proposition_mild_implies_classical_global_general_graph}. \hfill \( \square \)

\section{Blow-up and global existence on $\mathbb Z^N$: proofs}\label{section_proofs}


\subsection{Proof of Theorem \ref{theorem_blow_up_lattice_sub_critical_p}}\label{subsection_proofs_1}

\begin{definition}\label{definition_theta_function_lattice}
\noindent We define the theta function \( \theta : (0,+\infty) \to (0,+\infty) \) as
\[
\theta(s) := \sum_{m \in \mathbb{Z}} e^{- \pi \hspace{0.05em} m^2 \hspace{0.05em} s} \hspace{2.5em} \text{for all } \hspace{0.1em} s > 0.
\]
\end{definition}

\noindent We highlight that the theta function is well-defined. Indeed, it is immediate to see that \( \theta \) is strictly positive. Furthermore, after fixing an arbitrary \( s \in (0,+\infty) \), it is straightforward to obtain the following:
\begin{equation}\label{technical_passages_theta_function}
\theta(s) = 1 + \sum_{m \in \mathbb{Z} \setminus \{0\}} e^{- \pi \hspace{0.05em} m^2 \hspace{0.05em} s}
= 1 + 2 \sum_{m=1}^{+\infty} e^{- \pi \hspace{0.05em} m^2 \hspace{0.05em} s}
\leq 1 + 2 \sum_{m=1}^{+\infty} \left[ e^{- \pi s} \right]^m,
\end{equation}
\noindent and the last series converges since \( e^{- \pi s} \in (0,1) \).

\medskip
\noindent We shall now state two auxiliary results, which will be exploited in order to prove \cref{theorem_blow_up_lattice_sub_critical_p}.

The estimate contained in the next lemma can be easily derived. Throughout the following, we shall employ the notation \( |\cdot| \) to denote the cardinality of a set.

\begin{lemma}\label{remark_balls_lattice}
\noindent For any fixed vertex \( x \in \mathbb{Z}^N \) and for any \( r > 0 \), the following holds:
\[
|\overline{B_r(x)}| \geq c_N \, r^N,
\]
\noindent where \( c_N := N^{-\frac{N}{2}} > 0 \).
\end{lemma}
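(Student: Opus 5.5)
The plan is to exhibit an explicit cube of lattice points contained in the closed Euclidean ball $\overline{B_r(x)}$ and count its points. By translation invariance of $\mathbb{Z}^N$ we may take $x=0$. Every $y\in\mathbb{Z}^N$ with $\max_i |y_i| \le r/\sqrt{N}$ satisfies
\[
|y|^2=\sum_{i=1}^N y_i^2 \le N\cdot \frac{r^2}{N}=r^2 ,
\]
so $y\in\overline{B_r(0)}$. Hence
\[
|\overline{B_r(0)}| \ \ge\ \bigl|\{y\in\mathbb{Z}^N : |y_i|\le r/\sqrt{N}\ \text{for all } i\}\bigr| = \bigl(2\lfloor r/\sqrt{N}\rfloor+1\bigr)^N .
\]

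It then remains to prove the one-dimensional inequality $2\lfloor s\rfloor+1\ge s$ for $s:=r/\sqrt{N}>0$. If $s<1$, the left side equals $1$, which exceeds $s$. If $s\ge1$, then $\lfloor s\rfloor > s-1$, so $2\lfloor s\rfloor+1> 2s-1\ge s$. Raising to the $N$-th power gives
\[
|\overline{B_r(x)}|\ \ge\ s^N = N^{-N/2}\, r^N = c_N\, r^N .
\]

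No step is a real obstacle. The only point needing care is the small-radius regime $r<\sqrt N$, where the cube collapses to the single point $\{x\}$. There one must use that $x\in\overline{B_r(x)}$ always holds, which is why the count starts at $1$ and not $0$, and why the crude bound $2\lfloor s\rfloor+1\ge s$ works uniformly in $r>0$.
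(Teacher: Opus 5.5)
Your argument is correct and complete. The cube $\{y\in\mathbb{Z}^N : \max_i |y_i - x_i| \le r/\sqrt{N}\}$ lies in $\overline{B_r(x)}$ and contains $(2\lfloor r/\sqrt{N}\rfloor+1)^N$ lattice points, and $2\lfloor s\rfloor+1 \ge s$ holds for every $s>0$, including the regime $s<1$ where only the centre is counted.

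The paper does not write out a proof; it only says the estimate ``can be easily derived''. However, the constant $c_N = N^{-N/2}$ is exactly what this inscribed-cube count produces, so your route is almost certainly the intended one.
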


\medskip
\noindent The following blow-up result will be expedient in the proof of \cref{theorem_blow_up_lattice_sub_critical_p}; for its proof we refer the reader to \cite{PZ1}.

\begin{lemma}\label{lemma_blow_up_lattice_1}
\noindent Let \( 1 < p \leq 1 + \frac{2}{N} \), and consider a function \( u_0 \) satisfying \eqref{eq:HP_ID_G}, with \( u_0 \not \equiv 0 \). Let \( u \) solve problem \eqref{problem_HR_G} with $X=\mathbb Z^N$. Moreover, suppose that there exists \( k > 0 \) such that:
\begin{equation}\label{condition_blow_up_G_lemma_1_new}
\sum_{x \in \mathbb{Z}^N} e^{-k \hspace{0.05em} |x|^2} \hspace{0.1em} u_0(x) \, \mu(x) >
(2N)^{\frac{p}{p-1}} \left[ \theta \left( \frac{k}{\pi} \right) \right]^N k^{\frac{1}{p-1}}.
\end{equation}
\noindent Then \( u \) blows up in finite time.
\end{lemma}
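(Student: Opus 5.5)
The plan is Kaplan's argument with a Gaussian weight, normalized to a probability measure. Set $\varphi(x):=e^{-k|x|^2}$ and $Z:=\sum_{x\in\mathbb Z^N}\varphi(x)\,\mu(x)$. Since $\mu\equiv 2N$ and $|x|^2=\sum_i x_i^2$, the sum factorizes and $\sum_{m\in\mathbb Z}e^{-km^2}=\theta(k/\pi)$, so $Z=2N\,[\theta(k/\pi)]^N$. Let $\psi:=\varphi/Z$, so that $\sum_x\psi\,\mu=1$, and consider
\[
F(t):=\sum_{x\in\mathbb Z^N}\psi(x)\,u(x,t)\,\mu(x),\qquad t\in[0,T).
\]
Since $u$ is bounded on $\mathbb Z^N\times[0,T']$ and $\psi\in\ell^1$, $F$ is well defined and $0\le F(t)\le\|u(\cdot,t)\|_\infty$. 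The hypothesis on $p$ plays no role here; the lemma only uses the size condition \eqref{condition_blow_up_G_lemma_1_new}.

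\textbf{Differential inequality.} I first show that $F\in C^1$ with $F'=\sum_x\psi\,(\Delta u+u^p)\,\mu$. On $[0,T']$, $|u_t|=|\Delta u+u^p|\le 2\|u\|_\infty+\|u\|_\infty^p$ uniformly, so the mean value theorem and dominated convergence justify differentiating under the sum. Next I use symmetry of $\omega$ to get $\sum_x\psi\,\Delta u\,\mu=\sum_x u\,\Delta\psi\,\mu$. The rearrangement is legitimate because $\sum_x\sum_y\omega(x,y)\,\psi(x)\,|u(y)-u(x)|<\infty$, each vertex having $2N$ neighbours and $u$ being bounded.

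\textbf{Key pointwise estimate.} The main computation is
\[
\Delta\varphi(x)=\frac{1}{2N}\sum_{i=1}^N\Big[e^{-k}\big(e^{2kx_i}+e^{-2kx_i}\big)-2\Big]\varphi(x)\ \ge\ -(1-e^{-k})\,\varphi(x)\ \ge\ -k\,\varphi(x),
\]
using $\cosh\ge1$ and $1-e^{-k}\le k$. Together with $u\ge0$ this gives $\sum_x u\,\Delta\psi\,\mu\ge -kF$. For the nonlinear term, $\psi\mu$ is a probability measure on $\mathbb Z^N$, so Jensen's inequality gives $\sum_x\psi\,u^p\,\mu\ge F^p$. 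Hence
\[
F'(t)\ge -kF(t)+F(t)^p\qquad\text{for } t\in(0,T).
\]

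\textbf{Conclusion.} The hypothesis \eqref{condition_blow_up_G_lemma_1_new}, divided by $Z$, gives
\[
F(0)>(2N)^{\frac{1}{p-1}}\,k^{\frac{1}{p-1}}\ \ge\ k^{\frac{1}{p-1}}.
\]
Thus $F(0)^{p-1}>k$. Comparison with the Bernoulli ODE $y'=-ky+y^p$, $y(0)=F(0)$, shows that $F$ is increasing and that $F(t)\ge y(t)$, where $y$ blows up at the finite time
\[
T^*=-\frac{1}{k(p-1)}\log\!\Big(1-\frac{k}{F(0)^{p-1}}\Big).
\]
If $u$ were global, or bounded up to some time beyond $T^*$, then $F\le\sup_{[0,T^*]}\|u(\cdot,t)\|_\infty<\infty$ on $[0,T^*]$, which is a contradiction. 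Therefore $T<\infty$, and by the dichotomy preceding \cref{blow_up_global_G}, $u$ blows up in finite time.

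I expect the only delicate step to be the rigorous justification of differentiating under the sum and of the discrete integration by parts. Both rest on $u$ being bounded on each $[0,T']$ and on the Gaussian weight being summable.
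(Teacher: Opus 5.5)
Your argument is correct and matches the Kaplan-type argument the paper relies on (the paper defers the proof to \cite{PZ1}). You use the Gaussian weight $e^{-k|x|^2}$ normalized by $2N[\theta(k/\pi)]^N$, the bound $\Delta\varphi\ge -k\varphi$, Jensen's inequality, and blow-up of the resulting Bernoulli inequality; the final step from $T\le T^*$ to $\|u(\cdot,t)\|_\infty\to+\infty$ rests, as in the paper, on the dichotomy stated before \cref{blow_up_global_G}. Note that your sharper estimate $\Delta\varphi\ge-(1-e^{-k})\varphi$ only needs $F(0)>k^{1/(p-1)}$, so the extra factor $(2N)^{1/(p-1)}$ coming from \eqref{condition_blow_up_G_lemma_1_new} is more than you need.
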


\medskip
\noindent \emph{Proof of \emph{\cref{theorem_blow_up_lattice_sub_critical_p}}}. We separately consider two distinct cases.

\medskip
\noindent \emph{a) Case} \( 1 < p < 1 + \frac{2}{N} \).

\medskip
\noindent We work under the hypothesis \( 1 < p < 1 + 2/N \), assuming that \( u_0 \not \equiv 0 \) satisfies \eqref{eq:HP_ID_G} and that \( u \not \equiv 0 \) solves problem \eqref{problem_HR_G} with initial condition \( u_0 \).

\medskip
\noindent We first notice that, since on the integer lattice it holds \( \mu \equiv 2N \), then it is easily seen that condition \eqref{condition_blow_up_G_lemma_1_new} is equivalent to
\begin{equation}\label{condition_blow_up_G_lemma_1}
\sum_{x \in \mathbb{Z}^N} e^{-k \hspace{0.05em} |x|^2} \hspace{0.1em} u_0(x) > \left[ \theta \left( \frac{k}{\pi} \right) \right]^N (2kN)^{\frac{1}{p-1}}.
\end{equation}
\noindent The goal is to prove that \( u \) is nonglobal. In particular, we want to show that inequality \eqref{condition_blow_up_G_lemma_1} holds, for some \( k > 0 \). Indeed, if this is the case, then \eqref{condition_blow_up_G_lemma_1_new} holds, hence \cref{lemma_blow_up_lattice_1} can be applied, yielding that \( u \) blows up in finite time, which is the thesis.

\medskip
\noindent We then proceed by investigating the behavior of both the left-hand side and the right-hand side of \eqref{condition_blow_up_G_lemma_1} as \( k \to 0^+ \).

\medskip
\noindent \textbf{(I)} First, consider the left-hand side of \eqref{condition_blow_up_G_lemma_1}. Thanks to the fact that, as \( k \to 0^+ \), it holds \( e^{-k |\cdot|^2} \to 1 \) pointwise in \( \mathbb{Z}^N \), we obtain:
\[
\lim_{k \to 0^+} e^{-k |x|^2} u_0(x) = u_0(x) \hspace{1.6em} \text{for all } x \text{ in } \mathbb{Z}^N.
\]
\noindent Now, for any fixed \( k > 0 \), the function \( e^{-k |\cdot|^2} u_0 \) is nonnegative, since by \eqref{eq:HP_ID_G} we have \( u_0 \geq 0 \) in \( \mathbb{Z}^N \). Moreover, for any fixed \( x \) in \( \mathbb{Z}^N \) and for all positive \( k_1, k_2 \) satisfying \( k_1 \leq k_2 \), it holds:
\[
  e^{- k_2 |x|^2} u_0(x) \leq e^{- k_1 |x|^2} u_0(x).
\]
\noindent Therefore, the Monotone Convergence Theorem, applied in the context of the measure space \( (\mathbb{Z}^N,\mathcal{P}(\mathbb{Z}^N),\mu) \), yields:
\[
\lim_{k \to 0^+} \sum_{x \in \mathbb{Z}^N} e^{-k |x|^2} u_0(x) \hspace{0.02em} \mu(x) = \sum_{x \in \mathbb{Z}^N} u_0(x) \hspace{0.02em} \mu(x).
\]
\noindent Now, since \( \mu \equiv 2N \) on \( \mathbb{Z}^N \), we can divide both sides by \( 2N \), obtaining:
\[
\lim_{k \to 0^+} \sum_{x \in \mathbb{Z}^N} e^{-k |x|^2} u_0(x) = \sum_{x \in \mathbb{Z}^N} u_0(x).
\]
\noindent In particular, as remarked above, it holds \( u_0 \not \equiv 0 \), and thanks to \eqref{eq:HP_ID_G} we also have \( u_0 \geq 0 \) pointwise in \( \mathbb{Z}^N \). Therefore, there must exist a vertex \( x^* \in \mathbb{Z}^N \) such that \( u_0(x^*) > 0 \), which implies:
\[
\sum_{x \in \mathbb{Z}^N} u_0(x) \geq u_0(x^*) > 0.
\]
\noindent In conclusion, we infer:
\[
\lim_{k \to 0^+} \sum_{x \in \mathbb{Z}^N} e^{-k |x|^2} u_0(x) > 0.
\]

\medskip
\noindent \textbf{(II)} Next, we consider the right-hand side of \eqref{condition_blow_up_G_lemma_1}, namely
\begin{equation}\label{definition_C_k_N_p}
C_{k,N,p} := \left[ \theta \left( \frac{k}{\pi} \right) \right]^N (2kN)^{\frac{1}{p-1}} \equiv (2N)^{\frac{1}{p-1}} \left[ \theta \left( \frac{k}{\pi} \right) \right]^N k^{\frac{1}{p-1}}.
\end{equation}
\noindent First, let us fix \( k > 0 \). From the beginning of this subsection, we recall the definition of the theta function \( \theta : (0,+\infty) \to (0,+\infty) \), namely
\[
\theta(s) = \sum_{m \in \mathbb{Z}} e^{- \pi \hspace{0.05em} m^2 \hspace{0.05em} s} \hspace{2.5em} \text{for all } \hspace{0.1em} s > 0.
\]
\noindent We now make use of {Theorem 3.2} in \cite{book_Stein_Shakarchi}, which states that
\[
\theta(s) = \sqrt{\frac{1}{s}} \hspace{0.4em} \theta \hspace{-0.25em} \left( \frac{1}{s} \right) \hspace{2.5em} \text{for all } \hspace{0.1em} s > 0.
\]
\noindent In particular, by choosing \( s = k / \pi > 0 \), we obtain the following identity:
\begin{equation}\label{identity_theta_pi_k_1}
\theta\left( \frac{k}{\pi} \right) = \sqrt{\frac{\pi}{k}} \hspace{0.4em} \theta \hspace{-0.25em} \left( \frac{\pi}{k} \right).
\end{equation}
\noindent In addition, in \eqref{technical_passages_theta_function} we obtained the following alternative expression for the theta function:
\[
\theta(s) = 1 + 2 \sum_{m=1}^{+\infty} e^{- \pi \hspace{0.05em} m^2 \hspace{0.05em} s} \hspace{2.5em} \text{for all } \hspace{0.1em} s > 0.
\]
\noindent Here, the choice \( s = \pi / k > 0 \) implies
\[
\theta\left( \frac{\pi}{k} \right) = 1 + 2 \sum_{m=1}^{+\infty} e^{- \frac{\pi^2 \hspace{0.05em} m^2}{k}}.
\]
\noindent Combining this relation with \eqref{identity_theta_pi_k_1}, we infer:
\begin{equation}\label{identity_theta_pi_k_2}
\theta\left( \frac{k}{\pi} \right) = \sqrt{\frac{\pi}{k}} \hspace{0.4em} \theta \hspace{-0.25em} \left( \frac{\pi}{k} \right)
= \sqrt{\frac{\pi}{k}} \left[ 1 + 2 \sum_{m=1}^{+\infty} e^{- \frac{\pi^2 \hspace{0.05em} m^2}{k}} \right].
\end{equation}
\noindent Arguing exactly as in \eqref{technical_passages_theta_function} we also get, for all \( s > 0 \):
\[
0 < \sum_{m=1}^{+\infty} e^{- \pi \hspace{0.05em} m^2 \hspace{0.05em} s} \leq \sum_{m=1}^{+\infty} \left[ e^{- \pi s} \right]^m = \frac{e^{- \pi s}}{1 - e^{- \pi s}},
\]
\noindent where the last equality exploits the well-known formula for the geometric series, which holds since in this case the common ratio satisfies
\[
0 < e^{- \pi s} < 1.
\]
\noindent In particular, by choosing \( s = \pi / k > 0 \), we deduce the following estimates:
\[
0 < \sum_{m=1}^{+\infty} e^{- \frac{\pi^2 \hspace{0.05em} m^2}{k}} \leq \frac{e^{- \frac{\pi^2}{k}}}{1 - e^{- \frac{\pi^2}{k}}}.
\]
\noindent Thanks to the arbitrariness of \( k > 0 \), we can now pass to the limit as \( k \to 0^+ \), obtaining:
\[
0 \leq \lim_{k \to 0^+} \sum_{m=1}^{+\infty} e^{- \frac{\pi^2 \hspace{0.05em} m^2}{k}} \leq \lim_{k \to 0^+} \frac{e^{- \frac{\pi^2}{k}}}{1 - e^{- \frac{\pi^2}{k}}} = 0,
\]
\noindent where the last equality exploits a trivial limit computation. Therefore, it holds:
\[
\lim_{k \to 0^+} \sum_{m=1}^{+\infty} e^{- \frac{\pi^2 \hspace{0.05em} m^2}{k}} = 0,
\]
\noindent so that
\[
\lim_{k \to 0^+} \left[ \sqrt{\frac{k}{\pi}} \hspace{0.4em} \theta \hspace{-0.25em} \left( \frac{k}{\pi} \right) \right] = \lim_{k \to 0^+} \left[ 1 + 2 \sum_{m=1}^{+\infty} e^{- \frac{\pi^2 \hspace{0.05em} m^2}{k}} \right] = 1 + 2 \lim_{k \to 0^+} \sum_{m=1}^{+\infty} e^{- \frac{\pi^2 \hspace{0.05em} m^2}{k}} = 1 + 2 \cdot 0 = 1,
\]
\noindent where, in the first equality, the identity \eqref{identity_theta_pi_k_2} was used. In particular, we infer:
\[
\theta \hspace{-0.25em} \left( \frac{k}{\pi} \right) \sim \sqrt{\frac{\pi}{k}} \hspace{2.0em} \text{as } k \to 0^+.
\]
\noindent We can now exploit \eqref{definition_C_k_N_p} in order to get:
\[
C_{k,N,p} = (2N)^{\frac{1}{p-1}} \left[ \theta \left( \frac{k}{\pi} \right) \right]^N k^{\frac{1}{p-1}} \hspace{0.2em} \sim \hspace{0.2em}
(2N)^{\frac{1}{p-1}} \left[ \sqrt{\frac{\pi}{k}} \right]^N k^{\frac{1}{p-1}} \hspace{2.0em} \text{as } k \to 0^+.
\]
\noindent Finally, rewriting the term at the right-hand side of the previous asymptotic estimate yields:
\[
(2N)^{\frac{1}{p-1}} \left[ \sqrt{\frac{\pi}{k}} \right]^N k^{\frac{1}{p-1}}
= (2N)^{\frac{1}{p-1}} \left( \frac{\pi}{k} \right)^{\frac{N}{2}} k^{\frac{1}{p-1}}
= \left[ (2N)^{\frac{1}{p-1}} \hspace{0.2em} \pi^{\frac{N}{2}} \right] k^{\frac{1}{p-1} - \frac{N}{2}}.
\]
\noindent Therefore, we get:
\[
C_{k,N,p} \hspace{0.2em} \sim \hspace{0.2em} \left[ (2N)^{\frac{1}{p-1}} \hspace{0.2em} \pi^{\frac{N}{2}} \right] k^{\frac{1}{p-1} - \frac{N}{2}} \hspace{2.0em} \text{as } k \to 0^+,
\]
\noindent so that
\[
\lim_{k \to 0^+} C_{k,N,p} = \lim_{k \to 0^+} \left\{ \left[ (2N)^{\frac{1}{p-1}} \hspace{0.2em} \pi^{\frac{N}{2}} \right] k^{\frac{1}{p-1} - \frac{N}{2}} \right\}
= \left[ (2N)^{\frac{1}{p-1}} \hspace{0.2em} \pi^{\frac{N}{2}} \right] \lim_{k \to 0^+} k^{\frac{1}{p-1} - \frac{N}{2}} = 0,
\]
\noindent where the last equality comes from the fact that we are working under the hypothesis \( 1 < p < 1 + 2/N \), and thus it holds \( \frac{1}{p-1} - \frac{N}{2} > 0 \). In conclusion, the right-hand side of \eqref{condition_blow_up_G_lemma_1} tends to zero as \( k \to 0^+ \), namely:
\[
\lim_{k \to 0^+} \left\{ \left[ \theta \left( \frac{k}{\pi} \right) \right]^N (2kN)^{\frac{1}{p-1}} \right\} \equiv \lim_{k \to 0^+} C_{k,N,p} = 0.
\]

\medskip
\noindent Combining the results obtained in \textbf{(I)} and \textbf{(II)}, we get:
\[
\lim_{k \to 0^+} \sum_{x \in \mathbb{Z}^N} e^{-k |x|^2} u_0(x) > \lim_{k \to 0^+} \left\{ \left[ \theta \left( \frac{k}{\pi} \right) \right]^N (2kN)^{\frac{1}{p-1}} \right\},
\]
\noindent which implies that there must exist a small \( \bar{k} > 0 \) for which it holds
\[
\sum_{x \in \mathbb{Z}^N} e^{- \bar{k} |x|^2} u_0(x) > \left[ \theta \left( \frac{\bar{k}}{\pi} \right) \right]^N \left( 2 \bar{k} N \right)^{\frac{1}{p-1}}.
\]
\noindent Notice that this inequality exactly corresponds to \eqref{condition_blow_up_G_lemma_1}, with \( k = \bar{k} \). As previously remarked, this yields the thesis.

\medskip
\medskip
\noindent \emph{b) Case} \( p = 1 + \frac{2}{N} \).

\medskip
\noindent In order to ease the notation, throughout the following we will not explicitly highlight the dependence of the involved constants on the spatial dimension \( N \). We work under the hypothesis \( p = 1 + 2/N \), assuming that \( u_0 \not \equiv 0 \) satisfies \eqref{eq:HP_ID_G} and that \( u \not \equiv 0 \) solves problem \eqref{problem_HR_G} with initial condition \( u_0 \). Our goal is to show that \( u \) blows up in finite time.

\medskip
\noindent By contradiction, we assume that \( u \) is a global solution, namely \( u(\cdot, t) \in \ell^\infty(\mathbb{Z}^N) \) for all \( t > 0 \). Since the integer lattice is connected, locally finite and stochastically complete, we can invoke \cref{proposition_Duhamel_formula_HR_G}, yielding that the global solution \( u \) satisfies identity \eqref{eq:Duhamel_G} pointwise in \( \mathbb{Z}^N \times (0,+\infty) \), namely:
\[
u(x,t) = (P_t \, u_0)(x) + \int_0^t \left( P_{t-s} \, \left[u(\cdot,s) \right]^p \right)(x) \, ds \hspace{2.5em} \text{for all } \hspace{0.1em} (x,t) \in \hspace{0.05em} \mathbb{Z}^N \times (0,+\infty).
\]
\noindent Thanks to \eqref{definition_convolution_lattice}, this identity can be rewritten as
\begin{equation}\label{definition_Duhamel_lattice}
u(x,t) = (K_t * u_0)(x) + \int_0^t \left( K_{t-s} * \left[u(\cdot,s) \right]^p \right)(x) \, ds,
\end{equation}
\noindent holding for all \( (x,t) \in \mathbb{Z}^N \times (0,+\infty) \). Here, the convolution operator is intended as in \eqref{definition_convolution_lattice}, and \( K \) denotes the heat kernel associated to the heat equation posed on the integer lattice; the analytical expression of such function was given in \eqref{definition_K_t_HK_lattice}. Notice that the two convolutions appearing in \eqref{definition_Duhamel_lattice} are well-defined. Indeed, by \eqref{eq:HP_ID_G}, we have \( u_0 \in \ell^\infty(\mathbb{Z}^N) \); in addition, for any \( s > 0 \) it holds \( \left[u(\cdot,s) \right]^p \in \ell^\infty(\mathbb{Z}^N) \), thanks to the fact that \( u \) is a global solution. Now, as clarified in \cref{subsection_H_E_graph_setting}, the convolution with the heat kernel \( K \) is well-defined for bounded functions.

\medskip
\noindent Now, since \( u_0 \not \equiv 0 \), then there must exist a vertex \( y_0 \in \mathbb{Z}^N \) such that \( u_0(y_0) > 0 \). In particular, for all \( (x,t) \text{ in } \mathbb{Z}^N \times (0,+\infty) \) it holds:
\begin{equation}\label{geq_1_lattice}
  u(x,t) \geq (K_t * u_0)(x) = 2N \sum_{y \in \mathbb{Z}^N} K_t(x-y) \, u_0(y) \geq C_1 \, K_t(x-y_0) > 0,
\end{equation}
\noindent where \( C_1 := 2 N u_0(y_0) > 0 \). Indeed, we recall, from property b) in \cref{proposition_properties_HK_lattice}, that \( K_{h} > 0 \) in \( \mathbb{Z}^N \), for all \( h > 0 \). Now, the fact that \( u \geq 0 \) in \( \mathbb{Z}^N \times (0,+\infty) \), together with the definition of convolution, yields that the time integral in \eqref{definition_Duhamel_lattice} is nonnegative, which justifies the first inequality in \eqref{geq_1_lattice}. Instead, the equality is due to \eqref{definition_convolution_lattice}, and the following passages are trivial.

\medskip
\noindent If we now fix an arbitrary time \( t^* > 0 \) and set \( u_0^* := u(\cdot, t^*) \) in \( \mathbb{Z}^N \), then by \cref{remark_time_translation_G} \( u \) solves the following problem:
\begin{equation}\label{translated_lattice}
\begin{cases}
u_t - \Delta u = u^p & \text{ in } \mathbb{Z}^N \times (t^*, +\infty) \\
u = u_0^* & \text{ in } \mathbb{Z}^N \times \{ t^* \} \\
u \geq 0 & \text{ in } \mathbb{Z}^N \times (t^*, +\infty),
\end{cases}
\end{equation}
\noindent with \( u_0^* \) satisfying \eqref{eq:HP_ID_G}. Notice that \eqref{translated_lattice} is a translation in time of problem \eqref{problem_HR_G} for which the initial instant is \( t^* > 0 \). Therefore, we consider a new function \( u^* \), defined as
\begin{equation}\label{relation_star_lattice}
u^*(\cdot, t) := u(\cdot, t + t^*) \hspace{2.0em} \text{for every } t \geq 0.
\end{equation}
\noindent Now, given the fact that \( u \) is a global solution to problem \eqref{problem_HR_G}, it is not difficult to verify that \( u^* \) solves pointwise \eqref{problem_HR_G}, with initial condition \( u_0^* = u(\cdot, t^*) = u^*(\cdot, 0) \), and that \( u^* \geq 0 \) in \( \mathbb{Z}^N \times (0,+\infty) \). In addition, \( u^* \) satisfies the regularity properties \eqref{reg_sol_1_G} and \eqref{reg_sol_2_G} in \cref{definition_sub_super_sols_G}, with \( T = +\infty \). In conclusion, \( u^* \) is a global solution to \eqref{problem_HR_G}. Therefore, we can now exploit the theory previously developed regarding our problem, but considering the unknown \( u^* \) and the initial datum \( u_0^* \).

\medskip
\noindent We explicitly notice that, thanks to \eqref{geq_1_lattice} and \eqref{relation_star_lattice}, we obtain both \( u^* \not \equiv 0 \) and \( u_0^* \not \equiv 0 \). More specifically, it holds:
\[
u^*(x,t) = u(x, t + t^*) \geq C_1 \, K_{t + t^*}(x-y_0) > 0 \hspace{2.0em} \text{ for all } (x,t) \in \mathbb{Z}^N \times [0,+\infty),
\]
\noindent which is a trivial consequence of \eqref{geq_1_lattice}. Therefore, we obtain:
\begin{equation}\label{geq_2_lattice}
\left[ u^*(x,t) \right]^p \geq (C_1)^p \left[ K_{t + t^*}(x-y_0) \right]^p \hspace{2.5em} \text{for all } (x,t) \text{ in } \mathbb{Z}^N \times [0,+\infty).
\vspace{0.3em}
\end{equation}
\noindent In addition, for all \( (x,t) \in \mathbb{Z}^N \times (0,+\infty) \) we can write:
\begin{equation}\label{geq_3_lattice}
\begin{aligned}
u^*(x,t) & = (K_t * u_0^*)(x) + \int_0^t (K_{t-s} * \left[ u^*(\cdot,s) \right]^p)(x) \, ds \\
& \geq \int_0^t (K_{t-s} * \left[ u^*(\cdot,s) \right]^p)(x) \, ds,
\end{aligned}
\end{equation}
\noindent where we have first exploited \eqref{definition_Duhamel_lattice}, and then we have made use of the inequality \( K_t * u_0^* \geq 0 \), holding pointwise in \( \mathbb{Z}^N \) thanks to the positivity of the heat kernel and the fact that \( u_0^* \geq 0 \) in \( \mathbb{Z}^N \). Now, by combining \eqref{geq_3_lattice} with the definition of convolution, we obtain, for all \( (x,t) \in \mathbb{Z}^N \times (0,+\infty) \):
\begin{equation}\label{geq_5_lattice}
\begin{aligned}
u^*(x,t) & \geq \int_0^t (K_{t-s} * \left[ u^*(\cdot,s) \right]^p)(x) \, ds \\
& = 2N \int_0^t \sum_{y \in \mathbb{Z}^N} K_{t-s}(x-y) \left[ u^*(y,s) \right]^p \, ds \\
& \geq C_2 \int_0^t \sum_{y \in \mathbb{Z}^N} K_{t-s}(x-y) \left[ K_{s + t^*}(y - y_0) \right]^p \, ds,
\end{aligned}
\end{equation}
\noindent where \( C_2 := 2N (C_1)^p \equiv 2N (C_1)^{1 + 2/N} > 0 \). Here, the last equality uses \eqref{geq_2_lattice} and the positivity of the heat kernel.

\medskip
\noindent By summing over \( \mathbb{Z}^N \) in \eqref{geq_5_lattice}, we then get:
\begin{equation}\label{geq_6_lattice}
\sum_{x \in \mathbb{Z}^N} u^*(x,t) \geq C_2 \int_0^t \sum_{x,y \in \mathbb{Z}^N} K_{t-s}(x-y) \left[ K_{s + t^*}(y - y_0) \right]^p \, ds \hspace{3.0em} \text{for all } t > 0,
\end{equation}
\noindent where Tonelli’s Theorem is implicitly used, allowing the interchange of the order of integration and sum.

\medskip
\noindent Now, let us fix a time instant \( t > 2 \) and \( s \in (1,t-1) \). Moreover, we define the set
\[
\begin{aligned}
R_s := & \left\{ (x,y) \in \mathbb{Z}^N \times \mathbb{Z}^N : |y - y_0| \leq \sqrt{\frac{s+t^*}{N}}, \hspace{0.3em} |x - y| \leq \sqrt{\frac{t-s}{N}} \right\} \\
= & \left\{ (x,y) \in \mathbb{Z}^N \times \mathbb{Z}^N : y \in \overline{B_{\sqrt{\frac{s+t^*}{N}}}(y_0)}, \hspace{0.3em} x \in \overline{B_{\sqrt{\frac{t-s}{N}}}(y)} \right\}.
\end{aligned}
\]
\noindent For notational convenience, for the moment we also set
\begin{equation}\label{definitions_temporary_S}
S_{y_0} := \overline{B_{\sqrt{\frac{s+t^*}{N}}}(y_0)}, \hspace{2.5em} S_y := \overline{B_{\sqrt{\frac{t-s}{N}}}(y)},
\end{equation}
\noindent so that it holds:
\begin{equation}\label{cardinality_set_R_s_1}
\begin{aligned}
|R_s| & = \sum_{(x,y) \in R_s} 1 \\
& = \sum_{y \in S_{y_0}} \left[ \sum_{x \in S_y} 1 \right] \\
& \equiv \sum_{y \in S_{y_0}} |S_y| \\
& \geq \min \limits_{z \in S_{y_0}} |S_z| \left[ \sum_{y \in S_{y_0}} 1 \right] \\
& \equiv |S_{y_0}| \left[ \min \limits_{z \in S_{y_0}} |S_z| \right].
\end{aligned}
\end{equation}
\noindent From \cref{remark_balls_lattice}, we know that, for any fixed vertex \( w \in \mathbb{Z}^N \) and for all \( r > 0 \), it holds:
\[
|\overline{B_r(w)}| \geq C_3 \, r^N,
\]
\noindent where \( C_3 := N^{-\frac{N}{2}} > 0 \). Therefore, by combining \eqref{definitions_temporary_S} and \eqref{cardinality_set_R_s_1}, we get:
\begin{equation}\label{cardinality_set_R_s_2}
\begin{aligned}
|R_s| & \geq |S_{y_0}| \left[ \min \limits_{z \in S_{y_0}} |S_z| \right] \\
& \geq (C_3)^2 \left( \sqrt{\frac{s+t^*}{N}} \right)^N \left( \sqrt{\frac{t-s}{N}} \right)^N \\
& \equiv C_4 \left( s + t^* \right)^{\frac{N}{2}} \left( t - s \right)^{\frac{N}{2}},
\end{aligned}
\end{equation}
\noindent where \( C_4 := (C_3)^2 \, N^{-N} > 0 \).

\medskip
\noindent Now, since \( 1 < s < t - 1 \) and \( t^* > 0 \), it follows that
\begin{equation}\label{times_geq_1}
s + t^* > 1, \hspace{2.5em} t - s > 1.
\end{equation}
\noindent Furthermore, for all \( (x,y) \in R_s \), it holds
\begin{equation}\label{inequalities_space_time_lattice}
|y - y_0| \leq \sqrt{\frac{s+t^*}{N}} \hspace{1.0em} \text{ and } \hspace{1.0em} |x - y| \leq \sqrt{\frac{t-s}{N}},
\end{equation}
\noindent so that
\[
s + t^* \geq N \, |y-y_0|^2 \equiv N \left[ d(y-y_0,0) \right]^2 \geq \left[ \rho(y-y_0,0) \right]^2 \geq \rho(y-y_0,0),
\]
\noindent where the second inequality comes from \eqref{equivalence_rho_d__lattice}, while the last one is due to the fact that \( \rho \) has codomain \( \mathbb{N}_0 \). By arguing in the exact same way, we also infer that
\[
t - s \geq N \, |x-y|^2 \equiv N \left[ d(x-y,0) \right]^2 \geq \left[ \rho(x-y,0) \right]^2 \geq \rho(x-y,0).
\]
\noindent Therefore, thanks to \eqref{times_geq_1}, for all \( (x,y) \in R_s \) it holds:
\[
s + t^* \geq \max \left\{ 1, \rho(y-y_0,0) \right\} \hspace{1.0em} \text{ and } \hspace{1.0em} t - s \geq \max \left\{ 1, \rho(x-y,0) \right\}.
\]
\noindent We are then allowed to apply the first bound in the statement of \cref{lemma_inequalities_HK_lattice}, which yields, for a couple of positive constants \( C_5, C_6 \) only depending on \( N \), both
\[
K_{t-s}(x-y) \geq \frac{C_5}{(t-s)^{N/2}} \, e^{- C_6 \, \frac{|x-y|^2}{t-s}} \geq \frac{C_5}{(t-s)^{N/2}} \, e^{- \frac{C_6}{N}} \equiv C_7 \, (t-s)^{- \frac{N}{2}}
\]
\noindent and
\[
K_{s + t^*}(y-y_0) \geq \frac{C_5}{(s + t^*)^{N/2}} \, e^{- C_6 \, \frac{|y-y_0|^2}{s + t^*}} \geq \frac{C_5}{(s + t^*)^{N/2}} \, e^{- \frac{C_6}{N}} \equiv C_7 \, (s + t^*)^{- \frac{N}{2}},
\]
\noindent holding for all \( (x,y) \in R_s \). Here, the last inequalities are due to \eqref{inequalities_space_time_lattice}. Moreover, in the last equalities we have set \( C_7 := C_5 \, e^{- \frac{C_6}{N}} > 0 \).

\medskip
\noindent In conclusion, we obtain, for all \( (x,y) \in R_s \):
\[
\begin{aligned}
K_{t-s}(x-y) \left[ K_{s + t^*}(y - y_0) \right]^p & \geq (C_7)^{p+1} \, (t-s)^{- \frac{N}{2}} \, (s + t^*)^{- \frac{pN}{2}} \\
& \equiv C_8 \, (t-s)^{- \frac{N}{2}} \, (s + t^*)^{- \frac{pN}{2}},
\end{aligned}
\]
\noindent where \( C_8 := (C_7)^{p+1} \equiv (C_7)^{2 + 2/N} > 0 \). Therefore, the following bound holds for all \( s \in (1, t-1) \):
\[
\begin{aligned}
\sum_{x,y \in \mathbb{Z}^N} K_{t-s}(x-y) \left[ K_{s + t^*}(y - y_0) \right]^p & \geq \sum_{(x,y) \in R_s} K_{t-s}(x-y) \left[ K_{s + t^*}(y - y_0) \right]^p \\
& \geq C_8 \, (t-s)^{- \frac{N}{2}} \, (s + t^*)^{- \frac{pN}{2}} \left[ \sum_{(x,y) \in R_s} 1 \right] \\
& \equiv C_8 \, (t-s)^{- \frac{N}{2}} \, (s + t^*)^{- \frac{pN}{2}} \, |R_s| \\
& \geq C_9 \, (t-s)^{- \frac{N}{2}} \, (s + t^*)^{- \frac{pN}{2}} \, \left( s + t^* \right)^{\frac{N}{2}} \left( t - s \right)^{\frac{N}{2}} \\
& \equiv C_9 \, (s + t^*)^{- (p-1) \, \frac{N}{2}} \\
& \equiv \frac{C_9}{s + t^*},
\end{aligned}
\]
\noindent where \( C_9 := C_4 \, C_8 > 0 \). Here, the first inequality exploits the  positivity of the heat kernel, while in the third one the bound \eqref{cardinality_set_R_s_2} is used. The last equality, instead, is due to the assumption on \( p \).

\medskip
\noindent We can now integrate over \( s \in (1, t-1) \), obtaining, for all \( t > 2 \):
\[
\begin{aligned}
\int_{1}^{t-1} \sum_{x,y \in \mathbb{Z}^N} K_{t-s}(x-y) \left[ K_{s + t^*}(y - y_0) \right]^p \, ds & \geq C_9 \, \int_{1}^{t-1} \frac{1}{s + t^*} \, ds \\
& = C_9 \, \log \left( \frac{t + t^* - 1}{t^* + 1} \right),
\end{aligned}
\]
\noindent where the last equality corresponds to an elementary integral computation. In particular, since the integrand function is nonnegative, for all \( t > 2 \) it also holds:
\[
\begin{aligned}
\int_0^t \sum_{x,y \in \mathbb{Z}^N} K_{t-s}(x-y) \left[ K_{s + t^*}(y - y_0) \right]^p \, ds
& \geq \int_{1}^{t-1} \sum_{x,y \in \mathbb{Z}^N} K_{t-s}(x-y) \left[ K_{s + t^*}(y - y_0) \right]^p \, ds \\
& \geq C_9 \, \log \left( \frac{t + t^* - 1}{t^* + 1} \right).
\end{aligned}
\]
\noindent We can finally exploit \eqref{geq_6_lattice} to obtain, for all \( t > 2 \):
\[
\begin{aligned}
\sum_{x \in \mathbb{Z}^N} u^*(x,t) & \geq C_2 \int_0^t \sum_{x,y \in \mathbb{Z}^N} K_{t-s}(x-y) \left[ K_{s + t^*}(y - y_0) \right]^p \, ds \\
& \geq C_{10} \, \log \left( \frac{t + t^* - 1}{t^* + 1} \right),
\end{aligned}
\]
\noindent where \( C_{10} := C_2 \, C_9 > 0 \). In particular, since \( t^* > 0 \) is fixed, the term at the right-hand side diverges as \( t \to +\infty \), hence:
\[
\lim_{t \to +\infty} \sum_{x \in \mathbb{Z}^N} u^*(x,t) = +\infty.
\]
\noindent By exploiting the relation \eqref{relation_star_lattice}, we infer:
\[
\lim_{t \to +\infty} \sum_{x \in \mathbb{Z}^N} u(x,t) = +\infty.
\]
\noindent This means that we can always find a positive time at which the sum under consideration exceeds any fixed positive constant value. In particular, there exists \( \tilde{t} > 0 \) such that
\[
\sum_{x \in \mathbb{Z}^N} u \hspace{-0.2em} \left( x, \tilde{t} \right) > (2 \pi N)^{\frac{N}{2}} > 0.
\]
\noindent We can now set \( \tilde{u}(\cdot, t) = u \left( \cdot, t + \tilde{t} \right) \) for every \( t \geq 0 \); arguing as before, we obtain that \( \tilde{u} \not \equiv 0 \) is a global solution to problem \eqref{problem_HR_G}, with initial datum \( \widetilde{u_0} := \tilde{u}(\cdot, 0) = u(\cdot, \tilde{t}) \not \equiv 0 \) satisfying \eqref{eq:HP_ID_G}. With this setting, the previous inequality reads as follows:
\begin{equation}\label{condition2_lattice}
\sum_{x \in \mathbb{Z}^N} \widetilde{u_0}(x) > (2 \pi N)^{\frac{N}{2}}.
\end{equation}
\noindent The fact that \( \widetilde{u_0} \) satisfies \eqref{eq:HP_ID_G} allows us to proceed as in step \textbf{(I)} of the proof of case \emph{a)}; specifically, by applying the Monotone Convergence Theorem in the context of the measure space \( (\mathbb{Z}^N,\mathcal{P}(\mathbb{Z}^N),\mu) \), we obtain:
\begin{equation}\label{condition3_lattice}
\lim_{k \to 0^+} \sum_{x \in \mathbb{Z}^N} e^{-k |x|^2} \widetilde{u_0}(x) = \sum_{x \in \mathbb{Z}^N} \widetilde{u_0}(x) > (2 \pi N)^{\frac{N}{2}},
\end{equation}
\noindent where the last inequality exploits \eqref{condition2_lattice}.

\medskip
\noindent We now recall, from step \textbf{(II)} of the proof of case \emph{a)}, that the term \( C_{k,N,p} \), defined in \eqref{definition_C_k_N_p} as
\[
C_{k,N,p} = \left[ \theta \left( \frac{k}{\pi} \right) \right]^N (2kN)^{\frac{1}{p-1}},
\]
\noindent has the following asymptotic behavior for small values of \( k \):
\[
C_{k,N,p} \hspace{0.2em} \sim \hspace{0.2em} \left[ (2N)^{\frac{1}{p-1}} \hspace{0.2em} \pi^{\frac{N}{2}} \right] k^{\frac{1}{p-1} - \frac{N}{2}} \hspace{2.0em} \text{as } k \to 0^+.
\]
\noindent This result was shown independently of the value of \( p > 1 \). In particular, within the current framework, where \( p = 1 + 2/N \), we get:
\[
C_{k,N,1 + \frac{2}{N}} \hspace{0.2em} \sim \hspace{0.2em} \left[ (2N)^{\frac{N}{2}} \hspace{0.2em} \pi^{\frac{N}{2}} \right] k^{0} \equiv (2 \pi N)^{\frac{N}{2}}
\hspace{2.0em} \text{as } k \to 0^+,
\]
\noindent so that
\[
\lim_{k \to 0^+} C_{k,N,1 + \frac{2}{N}} = (2 \pi N)^{\frac{N}{2}}.
\]
\noindent Combining this result with \eqref{condition3_lattice} yields:
\[
\lim_{k \to 0^+} \sum_{x \in \mathbb{Z}^N} e^{-k |x|^2} \widetilde{u_0}(x) > (2 \pi N)^{\frac{N}{2}} = \lim_{k \to 0^+} C_{k,N,1 + \frac{2}{N}}.
\]
\noindent This fact allows to conclude that there must exist a small \( \bar{k} > 0 \) such that
\[
\sum_{x \in \mathbb{Z}^N} e^{- \bar{k} \, |x|^2} \widetilde{u_0}(x) > C_{\bar{k},N,1 + \frac{2}{N}} = \left[ \theta \left( \frac{\bar{k}}{\pi} \right) \right]^N (2 \bar{k} N)^{\frac{N}{2}},
\]
\noindent where the last equality exploits \eqref{definition_C_k_N_p}.

\medskip
\noindent The obtained inequality corresponds exactly to \eqref{condition_blow_up_G_lemma_1}, in the specific case \( p = 1 + 2/N \) and with \( k = \bar{k} \). Therefore, as already discussed, \eqref{condition_blow_up_G_lemma_1_new} holds with \( k = \bar{k} \), hence we can apply \cref{lemma_blow_up_lattice_1}, obtaining that \( \tilde{u} \) blows up in finite time. This is clearly a contradiction, since we had already concluded that \( \tilde{u} \) is a global solution of problem \eqref{problem_HR_G}. Therefore, the original assumption that \( u \) is a global solution of problem \eqref{problem_HR_G} has been revealed false, meaning that \( u \) is nonglobal. This yields the thesis. \hfill \( \square \)

\medskip

\subsection{Proof of Theorem \ref{theorem_global_existence_lattice}}

\noindent We begin by establishing two auxiliary results in the general framework of weighted graphs; they will play a key role in the proof of \cref{theorem_global_existence_lattice} and will also be employed in \cref{subsection_proofs_2} below.

\begin{lemma}\label{lemma_mild_solution_b}
Let \( (X,\omega,\mu) \) be a locally finite weighted graph, and let the heat semigroup \( \left\{ P_t \right\}_{t \geq 0} \) be strongly continuous, positivity preserving and \( \ell^\infty \)-contractive. Consider $u_0\in\ell^\infty(X)$ and \( f : [0,+\infty) \to (0,+\infty) \), with \( f \in C^1((0,+\infty)) \hspace{0.1em} \cap \hspace{0.1em} C^0([0,+\infty)) \). In addition, set
\[
b(t) := \frac{f'(t)}{f(t)} \qquad \quad \text{for all } \hspace{0.1em} t \in (0,+\infty),
\]
together with
\[
z(\cdot,t) := P_t \hspace{0.05em} u_0 \qquad \quad \text{for all } \hspace{0.1em} t \in [0,+\infty)
\]
and
\[
u(\cdot,t) := f(t) \, z(\cdot,t) = f(t) \, P_t \hspace{0.05em} u_0 \qquad \quad \text{for all } \hspace{0.1em} t \in [0,+\infty).
\]
Then the following identities hold pointwise in \( X \):
\begin{equation}\label{eq:direct-mild}
\begin{aligned}
u(\cdot,0) &= f(0) \, u_0, \\
u(\cdot,t) &= P_t \left( u(\cdot,0) \right) + \int_0^t P_{t-s} \, \left( b(s) \, u(\cdot,s) \right) \, ds \qquad \quad \text{for all } \hspace{0.1em} t \in (0,+\infty).
\end{aligned}
\end{equation}
\end{lemma}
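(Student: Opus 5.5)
The identity at $t=0$ is immediate, since $P_0$ is the identity; hence $u(\cdot,0)=f(0)P_0u_0=f(0)u_0$. For $t>0$ the plan is to compute the integral term explicitly, using only linearity of $P_{t-s}$ and the semigroup identity \eqref{identity_P_t_P_s}, and not any PDE. For fixed $s\in(0,t]$, the scalar $b(s)f(s)=f'(s)$ factors out, and
\[
P_{t-s}\big(b(s)\,u(\cdot,s)\big)=b(s)f(s)\,P_{t-s}P_s u_0 = f'(s)\,P_t u_0 .
\]
Thus, pointwise at each $x\in X$, the integrand equals $f'(s)\,(P_tu_0)(x)$. Here $(P_tu_0)(x)$ is a constant independent of $s$, and it is finite because $\|P_tu_0\|_\infty\le\|u_0\|_\infty$ by $\ell^\infty$-contractivity.

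Integrating in $s$ then gives
\[
\int_0^t P_{t-s}\big(b(s)u(\cdot,s)\big)\,ds=(P_tu_0)\int_0^t f'(s)\,ds=(f(t)-f(0))\,P_tu_0 .
\]
The integral of $f'$ is evaluated by the fundamental theorem of calculus, and this is the one delicate point. We know $f\in C^1((0,\infty))\cap C^0([0,\infty))$, so $f'$ may fail to be integrable near $0$. I would handle this by integrating over $[\varepsilon,t]$, which gives $f(t)-f(\varepsilon)$, and letting $\varepsilon\to0^+$ with the continuity of $f$ at $0$. The integral $\int_0^t$ is therefore understood as an improper integral, that is, the limit of $\int_\varepsilon^t$. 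If Lebesgue integrability is required, I would note that in the applications $f$ is monotone near $0$, so $f'$ has a sign there and is integrable, or I would simply add that remark.

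Finally,
\[
P_t(u(\cdot,0))=f(0)P_tu_0 ,
\]
and adding this to the integral term yields $f(t)P_tu_0=u(\cdot,t)$, which is the second line of \eqref{eq:direct-mild}. Strong continuity and positivity are not really needed for this computation beyond guaranteeing that the objects are well defined in $\ell^\infty(X)$. The main obstacle is only the justification of $\int_0^t f'=f(t)-f(0)$ described above.
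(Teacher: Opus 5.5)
Your proposal is correct and is essentially the paper's argument. The paper also uses the semigroup identity to see that $s\mapsto f(s)\,(P_{t-s}P_su_0)(x)=f(s)\,(P_tu_0)(x)$ has derivative $f'(s)\,(P_tu_0)(x)=(P_{t-s}(b(s)u(\cdot,s)))(x)$, and then integrates over $(0,t)$. Your treatment of $\int_0^t f'(s)\,ds=f(t)-f(0)$ as a limit of $\int_\varepsilon^t$ handles the possible non-integrability of $f'$ at $0$, which the paper passes over silently; this is harmless in the applications, where $h'\ge 0$.
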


\noindent \emph{Proof.} First, by exploiting the definition of the function $u$ and the fact that $P_0$ coincides
with the identity operator, we immediately infer:
\[
u(\cdot,0)=f(0)P_0u_0=f(0)u_0,
\]
which is the first identity in \eqref{eq:direct-mild}.

\medskip
\noindent
Let us now prove the validity of the second identity in \eqref{eq:direct-mild}.
\textit{Throughout this proof, all time integrals are intended pointwise in $X$, namely after
fixing a vertex $x\in X$.}
Let us fix an arbitrary time instant $t>0$ and an arbitrary vertex $x\in X$, and define the
(real-valued) function $F_x:[0,t]\to\mathbb{R}$ as
\[
F_x(s):= f(s)\,\big(P_{t-s}z(\cdot,s)\big)(x)
\qquad \text{for all } s\in[0,t],
\]
where $z(\cdot,s)=P_su_0$ by assumption.

\medskip
\noindent
Next, define $G_x:[0,t]\to\mathbb{R}$ by
\[
G_x(s):=\big(P_{t-s}z(\cdot,s)\big)(x)=\big(P_{t-s}P_su_0\big)(x)
\qquad \text{for all } s\in[0,t].
\]
By the semigroup identity \eqref{identity_P_t_P_s}, we have
\[
G_x(s)=\big(P_tu_0\big)(x)\qquad \text{for all } s\in[0,t],
\]
hence $G_x$ is constant. Therefore,
\[
F_x(s)= f(s)\,\big(P_tu_0\big)(x)\qquad \text{for all } s\in[0,t].
\]
Since $f\in C^1((0,+\infty))\cap C^0([0,+\infty))$, it follows that $F_x\in C^1((0,t))$ and
\[
F_x'(s)= f'(s)\,\big(P_tu_0\big)(x)
= f'(s)\,\big(P_{t-s}z(\cdot,s)\big)(x)
\qquad \text{for all } s\in(0,t).
\]
By integrating the previous identity over $(0,t)$, we obtain
\[
F_x(t)-F_x(0)=\int_0^t f'(s)\,\big(P_{t-s}z(\cdot,s)\big)(x)\,ds.
\]
Now, using the definitions of $F_x$, $z$, $b$, and $u$, we have
\[
F_x(t)= f(t)\,\big(P_0z(\cdot,t)\big)(x)= f(t)z(x,t)=u(x,t),
\]
\[
F_x(0)= f(0)\,\big(P_tz(\cdot,0)\big)(x)= f(0)\,\big(P_tP_0u_0\big)(x)
= \big(P_t(f(0)u_0)\big)(x)=\big(P_t(u(\cdot,0))\big)(x),
\]
and, for all $s\in(0,t)$,
\[
f'(s)\,z(\cdot,s)=\frac{f'(s)}{f(s)}\,f(s)z(\cdot,s)= b(s)\,u(\cdot,s).
\]
Therefore,
\[
u(x,t)-\big(P_t(u(\cdot,0))\big)(x)
=\int_0^t \big(P_{t-s}(b(s)u(\cdot,s))\big)(x)\,ds.
\]
Since $x\in X$ and $t>0$ are arbitrary, the previous identity yields the second relation in
\eqref{eq:direct-mild}, namely
\[
u(\cdot,t)=P_t(u(\cdot,0))+\int_0^t P_{t-s}(b(s)u(\cdot,s))\,ds
\qquad \text{for all } t\in(0,+\infty),
\]
holding pointwise in $X$. This completes the proof. \hfill \( \square \)

\medskip
\begin{lemma}\label{lemma_mild_supersol_1}
Under the same notation and assumptions of \emph{\cref{lemma_mild_solution_b}}, suppose in addition that \( u_0 \geq 0 \) in \( X \) and that the function \( f \) satisfies the condition $f(0)\ge 1$, together with
\begin{equation}\label{eq:keyineq}
b(t) = \frac{f'(t)}{f(t)} \ge \left[ \|u(\cdot,t)\|_\infty \right]^{p-1} \qquad \quad \text{for all } \hspace{0.1em} t \in (0,+\infty).
\end{equation}
Then the following inequality holds pointwise in \( X \) for \( u \):
\[
u(\cdot,t) \ge P_t \, u_0 + \int_0^t \, P_{t-s} \left[ u(\cdot,s)^p \right] \, ds \qquad \quad \text{for all } \hspace{0.1em} t \in (0,+\infty).
\]
\end{lemma}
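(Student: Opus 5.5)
The plan is to start from the identity \eqref{eq:direct-mild} of \cref{lemma_mild_solution_b}, and then bound separately its linear term and its integral term from below. The first identity there gives $u(\cdot,0)=f(0)\,u_0$. Since $f(0)\ge 1$, $u_0\ge 0$ and $P_t$ is positivity preserving, we get $P_t(u(\cdot,0)) = f(0)\,P_t u_0 \ge P_t u_0$ pointwise in $X$. This disposes of the linear term.

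For the integral term, fix $s\in(0,t)$ and compare $b(s)\,u(\cdot,s)$ with $u(\cdot,s)^p$ pointwise. Note that $u(\cdot,s)=f(s)P_su_0\ge 0$, because $f>0$ and $P_s$ is positivity preserving. At each $y\in X$,
\[
u(y,s)^p = u(y,s)^{p-1}\,u(y,s) \le \left[\|u(\cdot,s)\|_\infty\right]^{p-1} u(y,s) \le b(s)\,u(y,s),
\]
where the last step uses \eqref{eq:keyineq}. Here $\|u(\cdot,s)\|_\infty\le f(s)\|u_0\|_\infty<\infty$ by $\ell^\infty$-contractivity. Applying $P_{t-s}$, which is positive and linear, gives $P_{t-s}(b(s)u(\cdot,s)) \ge P_{t-s}[u(\cdot,s)^p]\ge 0$ pointwise. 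Integrating in $s$ over $(0,t)$ at each fixed $x$ preserves the inequality.

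The only delicate point is making sure both time integrals exist as Lebesgue integrals at fixed $x$. The left one is finite because \eqref{eq:direct-mild} expresses it as the difference $u(x,t)-P_t(u(\cdot,0))(x)$. Its integrand $s\mapsto f'(s)(P_tu_0)(x)$ is in fact continuous on $(0,t)$, as seen in the proof of \cref{lemma_mild_solution_b}. The right integrand $s\mapsto (P_{t-s}[u(\cdot,s)^p])(x)$ is nonnegative and measurable, being a series of nonnegative continuous-in-$s$ terms, so its integral exists in $[0,\infty]$. The pointwise domination then shows it is finite.

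Combining everything in \eqref{eq:direct-mild} gives
\[
u(\cdot,t) \ge P_t u_0 + \int_0^t P_{t-s}[u(\cdot,s)^p]\,ds \qquad \text{for all } t>0,
\]
which is the claim. I expect no real obstacle; the only care needed is the measurability and integrability bookkeeping above.
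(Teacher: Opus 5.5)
Your proposal is correct and is essentially the paper's own proof. You start from \eqref{eq:direct-mild} and handle the linear term via $f(0)\ge 1$ and positivity of $P_t$. For the Duhamel term you use the pointwise bound $b(s)\,u(\cdot,s)\ge [\|u(\cdot,s)\|_\infty]^{p-1}u(\cdot,s)\ge u(\cdot,s)^p$, then positivity of $P_{t-s}$ and integration in $s$.

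Your extra integrability bookkeeping is a harmless addition that the paper leaves implicit. Note that \eqref{eq:keyineq} forces $f'\ge 0$, so the left integrand $f'(s)(P_tu_0)(x)$ is nonnegative and its Lebesgue integral over $(0,t)$ is simply $(f(t)-f(0))(P_tu_0)(x)$.
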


\noindent \emph{Proof.} We first notice that, since for all \( t > 0 \) the function \( u \) is given by
\[
u(\cdot,t) = f(t) \, P_t \, u_0,
\]
then the positivity of \( P_t \), together with the nonnegativity of both \( f \) and \( u_0 \), implies that \( u(\cdot,t) \geq 0 \) in \( X \), for all \( t > 0 \).

\medskip
\noindent Now, by the result \eqref{eq:direct-mild} in \cref{lemma_mild_solution_b}, we already know that \( u \) satisfies the following identity, pointwise in \( X \):
\[
u(\cdot,t) = P_t \left( f(0) \, u_0 \right) + \int_0^t P_{t-s} \, \left( b(s) \, u(\cdot,s) \right) \, ds \qquad \quad \text{for all } \hspace{0.1em} t \in (0,+\infty).
\]
By exploiting the facts that $f(0)\ge1$ and that $P_t$ is positivity preserving, together with the nonnegativity of \( u_0 \), we easily obtain the validity of the inequality
\begin{equation}\label{inequality_f_0_u_0}
P_t \left( f(0) \, u_0 \right) \ge P_t \, u_0 \qquad \quad \text{for all } \hspace{0.1em} t \in (0,+\infty),
\end{equation}
holding pointwise in \( X \).

\medskip
\noindent Let us now fix \( t \in (0,+\infty) \). Then, for all \( s \in (0,t) \), \( u(\cdot,s) \) is nonnegative, hence in \( X \) it holds:
\[
b(s) \, u(\cdot,s) \ge \left[ \|u(\cdot,s)\|_\infty \right]^{p-1} \, u(\cdot,s) \ge \left[ u(\cdot,s) \right]^p,
\]
where the first inequality exploits \eqref{eq:keyineq}. We can now apply the positivity preserving operator $P_{t-s}$ and integrate over \( s \in (0,t) \), obtaining:
\[
\int_0^t P_{t-s} \left( b(s) \, u(\cdot,s) \right) \,ds \ge \int_0^t \, P_{t-s} \left[u(\cdot,s) \right]^p \, ds,
\]
which holds pointwise in \( X \), for all \( t > 0 \). By combining this inequality with \eqref{inequality_f_0_u_0} and \eqref{eq:direct-mild}, we infer the following estimate in \( X \):
\[
\begin{aligned}
u(\cdot,t) & = P_t \left( f(0) \, u_0 \right) + \int_0^t P_{t-s} \, \left( b(s) \, u(\cdot,s) \right) \, ds \\
& \geq P_t \, u_0 + \int_0^t \, P_{t-s} \left[u(\cdot,s) \right]^p \, ds,
\end{aligned}
\]
holding for all \( t > 0 \). We have then shown the validity of the desired inequality, and the proof is complete. \hfill \( \square \)

\begin{proposition}\label{proposition_global_super_sol_lattice}
If \hspace{0.1em} \( p > 1 + \frac{2}{N} \), then equation \eqref{e30} admits a nontrivial global classical supersolution \( \overline{u} \). In addition, \( \overline{u} \) is a global mild supersolution of problem \eqref{problem_HR_G} with $X=\mathbb{Z}^N$, with initial datum \( \overline{u}(\cdot,0) \).
\end{proposition}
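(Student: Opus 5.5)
The plan is to build $\overline{u}$ in the product form of \cref{lemma_mild_solution_b}, $\overline{u}(\cdot,t) = f(t)\,P_t u_0$, where $u_0 \ge 0$, $u_0 \not\equiv 0$ is a fixed profile with finitely many nonzero values. A concrete choice is $u_0 = \varepsilon\,\delta_0$ with $\varepsilon>0$ small. The time factor will be $f(t) = \big(1 - \alpha\int_0^t (1+s)^{-N(p-1)/2}\,ds\big)^{-1/(p-1)}$ for a suitable $\alpha>0$. The lattice $\mathbb Z^N$ has bounded weighted degree, since $\operatorname{Deg}\equiv 1$. As in the proof of \cref{lemma2}, its heat semigroup is therefore uniformly (hence strongly) continuous, positivity preserving and $\ell^\infty$-contractive, so \cref{lemma_mild_solution_b} and \cref{lemma_mild_supersol_1} apply.

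\textbf{Key estimate.} By the upper bound in \cref{lemma_inequalities_HK_lattice} and $\|P_t u_0\|_\infty \le \|u_0\|_\infty$,
\[
\|P_t u_0\|_\infty \le \min\{\|u_0\|_\infty,\, 2N c_3 \|u_0\|_1\, t^{-N/2}\} \le A\,(1+t)^{-N/2},
\]
where $\|u_0\|_1=\sum_y u_0(y)$ and $A$ is a multiple of $\varepsilon$. Condition \eqref{eq:keyineq} then reads $f'/f \ge f^{p-1}\|P_tu_0\|_\infty^{p-1}$. It suffices to have $f' \ge A^{p-1}(1+t)^{-N(p-1)/2} f^p$. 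Solving this ODE with equality and $f(0)=1$ gives exactly the formula above with $\alpha = (p-1)A^{p-1}$.

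\textbf{Global existence of $f$.} Here $p>1+\frac2N$ is used: it gives $N(p-1)/2>1$, so $\int_0^\infty (1+s)^{-N(p-1)/2}ds<\infty$. Choosing $\varepsilon$ small makes $\alpha\int_0^\infty(\cdots)<1$. Then $f$ is $C^1$ on $[0,\infty)$, positive and bounded, with $f(0)=1$. This smallness-plus-integrability step is the main point. I expect the hardest part to be the careful bookkeeping of the constants and of the $t\to 0$ regime, which is handled by using $(1+t)$ in place of $t$.

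\textbf{Conclusion.} \cref{lemma_mild_supersol_1} yields that $\overline{u}$ is a global mild supersolution of \eqref{problem_HR_G} with datum $u_0$. Since $f(0)=1$, we have $u_0=\overline{u}(\cdot,0)$, as required. For the classical property, \cref{theorem_well_posedness_H_G} shows that $z=P_tu_0$ is $C^1$ in $t$ pointwise, solves $z_t=\Delta z$ and is bounded. Hence
\[
\overline{u}_t-\Delta\overline{u} = f' z = \frac{f'}{f}\,\overline{u} \ge \|\overline{u}(\cdot,t)\|_\infty^{p-1}\,\overline{u} \ge \overline{u}^p .
\]
Together with $\overline{u}\ge0$, the regularity \eqref{reg_sol_1_G}, and the bound $\|\overline{u}\|_\infty\le \sup f\,\|u_0\|_\infty$, this makes $\overline{u}$ a global classical supersolution of \eqref{e30}. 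Finally, $\overline{u}$ is nontrivial because $P_tu_0>0$ by the positivity of $K_t$.
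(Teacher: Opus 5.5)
Your argument is correct and is essentially the paper's: both take $\overline u(\cdot,t)=f(t)\,P_t w$ for a nonnegative, nontrivial $w\in\ell^1(\mathbb{Z}^N)$, with $f$ solving the Bernoulli-type ODE driven by the $t^{-N/2}$ heat-kernel bound. Both use $N(p-1)/2>1$ to keep $f$ bounded, check the differential inequality directly, and obtain the mild inequality from \cref{lemma_mild_supersol_1}. The only difference is that the paper removes the singularity at $t=0$ and gains smallness through a large time shift, $\overline u(x,t)=h(t)\,v(x,t+t_0)$ with an arbitrary strictly positive $v_0\in\ell^1$, whereas you use $(1+t)$ and a small amplitude $\varepsilon$.

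One caveat: with $u_0=\varepsilon\delta_0$ your $\overline u(\cdot,0)$ vanishes off the origin. That is fine for this proposition, but it would make \cref{theorem_global_existence_lattice} cover only data supported at $0$. Choosing a strictly positive profile such as $\varepsilon P_1\delta_0$ recovers the paper's $\overline u(\cdot,0)>0$.
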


\noindent \emph{Proof.} We work under the hypothesis \( p > 1 + 2/N \), and we consider a function \( v_0 \in \ell^1(\mathbb{Z}^N) \), with \( v_0 > 0 \) pointwise in \( \mathbb{Z}^N \). Since \( \mu \equiv 2N \) on \( \mathbb{Z}^N \), it is trivial to verify that \( v_0 \in \ell^1(\mathbb{Z}^N,\mu) \). Hence we also have \( v_0 \in \ell^\infty(\mathbb{Z}^N) \). In particular, from \cref{theorem_well_posedness_H_G_lattice} we know that the function \( v : \mathbb{Z}^N \times [0,+\infty) \to \mathbb{R} \), defined as
\[
v(x,t) := (K_t * v_0)(x) = 2N \sum_{y \in \mathbb{Z}^N} K_t(x-y) \, v_0(y) \hspace{2.5em} \text{for all } \hspace{0.1em} (x,t) \in \mathbb{Z}^N \times [0,+\infty),
\]
\noindent is the unique bounded solution of the problem \eqref{problem_H_G} posed on the lattice, associated with the initial datum \( v_0 \). Here, \( K_t \) denotes the heat kernel of the heat equation posed on the lattice; its analytical expression was given in \eqref{definition_K_t_HK_lattice}.

\medskip
\noindent We now recall, from \cref{lemma_inequalities_HK_lattice}, the following upper bound satisfied by the heat kernel:
\[
K_t(x) \leq \frac{c}{t^{N/2}} \hspace{2.5em} \text{for all } (x,t) \in \mathbb{Z}^N \times (0,+\infty),
\]
\noindent where \( c \) is a positive constant depending only on the dimension \( N \).

\medskip
\noindent Thanks to this inequality, we obtain, for all \( (x,t) \in \mathbb{Z}^N \times (0,+\infty) \):
\[
0 < 2N \sum_{y \in \mathbb{Z}^N} K_t(x-y) \, v_0(y) \leq 2N \, \frac{c}{t^{N/2}} \sum_{y \in \mathbb{Z}^N} v_0(y) \equiv \frac{c}{t^{N/2}} \, \| v_0 \|_1 < +\infty,
\]
\noindent where the first inequality exploits the strict positivity of \( v_0 \) and \( N \), together with property b) in \cref{proposition_properties_HK_lattice}, while the last one holds since \( v_0 \in \ell^1(\mathbb{Z}^N,\mu) \). In other words, we have:
\begin{equation}\label{inequalities_v_existence_lattice}
0 < v(x,t) \leq \frac{c}{t^{N/2}} \, \| v_0 \|_1 \hspace{2.5em} \text{for all } (x,t) \in \mathbb{Z}^N \times (0,+\infty).
\end{equation}
\noindent Now, we take a scalar function \( h \), defined at least in a subset of \( [0,+\infty) \), and a positive time instant \( t_0 > 0 \). Moreover, we set
\begin{equation}\label{definition_super_solution_existence_lattice}
\overline{u}(x,t) := h(t) \, v(x,t+t_0).
\end{equation}
\noindent Our aim is to find the conditions to be imposed on \( t_0 \) and \( h \) which ensure that this function is a global supersolution of the equation \( u_t - \Delta u = u^p \) on the lattice.

\medskip
\noindent In particular, we claim that, under the following conditions:
\begin{equation}\label{conditions_h_global_super_sol_lattice}
\begin{gathered}
h : [0,+\infty) \to (0,C) \hspace{2.5em} \text{for some constant } C > 0, \\[7pt]
h \in C^1((0,+\infty)) \cap C^0([0,+\infty)), \\[7pt]
\frac{h'(t)}{\left[ h(t) \right]^p} = \left( \frac{c \, \|v_0\|_1}{(t+t_0)^{N/2}} \right)^{p-1} \hspace{2.5em} \text{for all } t > 0,
\end{gathered}
\end{equation}
\noindent the function \( \overline{u} \), given by \eqref{definition_super_solution_existence_lattice}, is a nontrivial global supersolution of the equation \( u_t - \Delta u = u^p \).

\medskip
\noindent We start by noticing that, if the first condition in \eqref{conditions_h_global_super_sol_lattice} holds, then the relation \eqref{definition_super_solution_existence_lattice} is valid for all \(  (x,t) \in \mathbb{Z}^N \times [0,+\infty) \), so that \( \overline{u} : \mathbb{Z}^N \times [0,+\infty) \to \mathbb{R} \) and
\begin{equation}\label{positivity_super_solution_existence_lattice}
\overline{u}(x,t) = h(t) \, v(x,t+t_0) > 0 \hspace{2.5em} \text{for all } (x,t) \in \mathbb{Z}^N \times [0,+\infty),
\end{equation}
\noindent where we have also exploited the first inequality in \eqref{inequalities_v_existence_lattice}. In particular, it follows from \eqref{positivity_super_solution_existence_lattice} that \( u \) is nontrivial.

\medskip
\noindent Now, we verify the validity of requirements \eqref{reg_sol_1_G} and \eqref{reg_sol_2_G} in \cref{definition_sub_super_sols_G}, with \( T = +\infty \). First, after fixing an arbitrary vertex \( x \in \mathbb{Z}^N \), we have:
\[
\overline{u}(x,\cdot) = h(\cdot) \, v(x, t_0 + (\cdot)) \hspace{0.3em} \in \hspace{0.3em} C^1((0,+\infty)) \cap C^0([0,+\infty)),
\]
\noindent which is due to both \eqref{reg_sol__H_G} and the second condition in \eqref{conditions_h_global_super_sol_lattice}. Furthermore, for any fixed \( T' > 0 \) it holds:
\[
\begin{aligned}
\| u \|_{L^\infty(\mathbb{Z}^N \times [0,T'])} & = \sup_{(x,t) \in \mathbb{Z}^N \times [0,T']} h(t) \, v(x,t+t_0) \\
& = \sup_{(x,\tau) \in \mathbb{Z}^N \times [t_0,t_0 + T']} h(t) \, v(x,\tau) \\
& \leq C \, \sup_{(x,\tau) \in \mathbb{Z}^N \times [t_0,t_0 + T']} v(x,\tau) \\
& \leq C \, \| v \|_{L^\infty(\mathbb{Z}^N \times (0,+\infty))} \\
& < +\infty.
\end{aligned}
\]
\noindent Here, we have first exploited \eqref{definition_super_solution_existence_lattice}, then the first condition in \eqref{conditions_h_global_super_sol_lattice}, and finally the fact that \( v \) is a bounded solution to problem \eqref{problem_H_G}, so that \( v  \in L^\infty(\mathbb{Z}^N \times (0,+\infty)) \). In conclusion, both \eqref{reg_sol_1_G} and \eqref{reg_sol_2_G} hold for \( \overline{u} \), with \( T = +\infty \).

\medskip
\noindent We are only left to verify that \eqref{conditions_h_global_super_sol_lattice} implies that
\begin{equation}\label{inequality_overline_u_existence_lattice}
\overline{u}_t - \Delta \overline{u} \geq \overline{u}^p \hspace{2.5em} \text{in} \hspace{0.5em} \mathbb{Z}^N \times (0,+\infty).
\end{equation}
\noindent In order to do so, we exploit \eqref{definition_super_solution_existence_lattice} in order to compute formally the partial time derivative and the discrete Laplacian of the function \( \overline{u} \). More specifically, we get both
\[
\overline{u}_t(x,t) = \frac{\partial}{\partial t} \left[ h(t) \, v(x,t+t_0) \right] = h'(t) \, v(x,t+t_0) + h(t) \, v_t(x,t+t_0)
\]
\noindent and
\[
\begin{aligned}
\Delta \overline{u} (x,t) & = \frac{1}{2N} \sum_{y \in \mathbb{Z}^N} \left[ u(y,t) - u(x,t) \right] \omega_0(x,y) \\
& = \frac{1}{2N} \sum_{y \in \mathbb{Z}^N} \left[ h(t) \, v(y,t+t_0) - h(t) \, v(x,t+t_0) \right] \omega_0(x,y) \\
& = \frac{h(t)}{2N} \sum_{y \in \mathbb{Z}^N} \left[ v(y,t+t_0) - v(x,t+t_0) \right] \omega_0(x,y) \\
& = h(t) \, \Delta v (x,t+t_0).
\end{aligned}
\]
\noindent Therefore, we obtain:
\[
\begin{aligned}
\overline{u}_t(x,t) - \Delta \overline{u} (x,t) & = h'(t) \, v(x,t+t_0) + h(t) \, v_t(x,t+t_0) - h(t) \, \Delta v (x,t+t_0) \\
& = h'(t) \, v(x,t+t_0) + h(t) \left[ v_t(x,t+t_0) - \Delta v (x,t+t_0) \right] \\
& = h'(t) \, v(x,t+t_0),
\end{aligned}
\]
\noindent where the last equality is due to the fact that \( v \) solves problem \eqref{problem_H_G} on the lattice, hence in particular it holds
\[
v_t(x,t) - \Delta v (x,t) = 0 \hspace{2.5em} \text{for all } (x,t) \in \mathbb{Z}^N \times (0,+\infty).
\]
\noindent The inequality \eqref{inequality_overline_u_existence_lattice} is then equivalent to the following one:
\begin{equation}\label{equivalence_super_sol_existence_lattice}
h'(t) \, v(x,t+t_0) \geq \left[ h(t) \, v(x,t+t_0) \right]^p \hspace{2.5em} \text{for all } (x,t) \in \mathbb{Z}^N \times (0,+\infty).
\end{equation}
\noindent Now, the third requirement in \eqref{conditions_h_global_super_sol_lattice} yields, for all \( (x,t) \in \mathbb{Z}^N \times (0,+\infty) \):
\[
\frac{h'(t)}{\left[ h(t) \right]^p} = \left( \frac{c \, \|v_0\|_1}{(t+t_0)^{N/2}} \right)^{p-1} \geq \left[ v(x,t+t_0) \right]^{p-1},
\]
\noindent where the last inequality holds thanks to the upper bound in \eqref{inequalities_v_existence_lattice} and to the fact that \( p > 1 + 2/N > 1 \). Moreover, the first condition in \eqref{conditions_h_global_super_sol_lattice} implies that \( h > 0 \) in \( [0,+\infty) \), and from \eqref{inequalities_v_existence_lattice} we also know that \( v > 0 \) in \( \mathbb{Z}^N \times (0,+\infty) \). Therefore, we can multiply both sides of the obtained inequality by \( [h(t)]^p \, v(x,t+t_0) > 0 \), obtaining:
\[
h'(t) \, v(x,t+t_0) \geq \left[ h(t) \, v(x,t+t_0) \right]^p \hspace{2.5em} \text{for all } (x,t) \in \mathbb{Z}^N \times (0,+\infty),
\]
\noindent which exactly corresponds to \eqref{equivalence_super_sol_existence_lattice}. As discussed above, \eqref{inequality_overline_u_existence_lattice} is then obtained.

\medskip
\noindent In conclusion, the claim has been proved: under \eqref{conditions_h_global_super_sol_lattice}, the function \( \overline{u} \) defined in \eqref{definition_super_solution_existence_lattice} verifies all the requirements of a nontrivial global supersolution of the equation \( u_t - \Delta u = u^p \). In other words, the thesis follows if we exhibit a function \( h \) satisfying all the conditions in \eqref{conditions_h_global_super_sol_lattice}, for some \( t_0 > 0 \).

\medskip
\noindent We shall do so by coupling the third requirement in \eqref{conditions_h_global_super_sol_lattice} with a positive initial condition for \( h \); for the sake of simplicity, we choose \( h(0) = 1 \). Therefore, we look for a function \( h \) solving the following Cauchy problem:
\begin{equation}\label{ODE_h_existence_lattice}
\begin{cases}
\frac{h'(t)}{\left[ h(t) \right]^p} = \left( \frac{c \, \|v_0\|_1}{(t+t_0)^{N/2}} \right)^{p-1} \hspace{1.5em} \text{for all } t > 0 \\
h(0) = 1.
\end{cases}
\end{equation}
\noindent We proceed by finding the analytical expression of the solution to this problem; we will then investigate the values of \( t_0 \) for which such solution exists globally in time. More specifically, after fixing \( t > 0 \), we integrate both sides in \( (0,t) \) and exploit the initial condition, obtaining:
\[
\int_{0}^{t} \frac{h'(s)}{\left[ h(s) \right]^p} \, ds = \int_{1}^{h(t)} \frac{1}{h^p} \, dh = \left( c \, \| v_0 \|_{1} \right)^{p-1} \int_{0}^{t} (s+t_0)^{- \frac{N}{2} (p-1)} \, ds,
\]
\noindent where the first equality simply corresponds to a change of variable. Performing direct computations on the last equality yields the following:
\[
\frac{\left[ h(t) \right]^{1-p} - 1}{1-p} = \left( c \, \| v_0 \|_{1} \right)^{p-1} \, \frac{(t+t_0)^{1 - \frac{N}{2} (p-1)} - t_0^{1 - \frac{N}{2} (p-1)}}{1 - \frac{N}{2} (p-1)}.
\]
\noindent Rearranging the terms, we get
\[
\begin{aligned}
\left[ h(t) \right]^{1-p} & = 1 - \left[ \frac{(p-1) \left( c \, \| v_0 \|_{1} \right)^{p-1}}{\frac{N}{2}(p-1)-1} \, t_0^{1 - \frac{N}{2}(p-1)} \right]
\left[ 1 - \left( 1 + \frac{t}{t_0} \right)^{1 - \frac{N}{2}(p-1)} \right] \\
& \equiv 1 - D \left[ 1 - \left( 1 + \frac{t}{t_0} \right)^{1 - \frac{N}{2}(p-1)} \right],
\end{aligned}
\]
\noindent where we have set
\[
D := \frac{(p-1) \left( c \, \| v_0 \|_{1} \right)^{p-1}}{\frac{N}{2}(p-1)-1} \, t_0^{1 - \frac{N}{2}(p-1)}.
\]
\noindent Therefore, the analytical expression of the solution is
\begin{equation}\label{analytical_expression_h_existence_lattice}
h(t) = \left\{ 1 - D \left[ 1 - \left( 1 + \frac{t}{t_0} \right)^{1 - \frac{N}{2}(p-1)} \right] \right\}^{- \frac{1}{p-1}}.
\end{equation}
\noindent Now, notice that the assumptions on \( v_0 \) and \( p \) ensure that \( D \in (0,+\infty) \). In particular, the condition \( D \in (0,1) \) is equivalent to the following bound for \( t_0 \):
\begin{equation}\label{bound_t_0_existence_lattice}
t_0 > \left[ \frac{(p-1) \left( c \|v_0\|_{1} \right)^{p-1}}{\frac{N}{2}(p-1)-1} \right]^{\frac{2}{N(p-1)-2}} > 0,
\end{equation}
\noindent where in the last inequality we have used the fact that \( \frac{N}{2}(p-1) - 1 > 0 \), since by assumption it holds \( p > 1 + 2/N \). In conclusion, if \( t_0 \) satisfies \eqref{bound_t_0_existence_lattice} then \( D \in (0,1) \), and therefore the function \( h \) in \eqref{analytical_expression_h_existence_lattice} is well-defined for all nonnegative times. More specifically, if \( D \in (0,1) \), then it is not difficult to verify that \( h(t) > 0 \) for all \( t > 0 \); furthermore, since \( p > 1 + 2/N \), \( h \) is monotone increasing in \( [0,+\infty) \), and it exhibits an asymptote as \( t \to +\infty \), namely:
\[
\lim_{t \to +\infty} h(t) = (1 - D)^{- \frac{1}{p-1}} \hspace{0.3em} \in \hspace{0.3em} (1,+\infty).
\]
\noindent Therefore, we have:
\[
h : [0,+\infty) \to [1, (1 - D)^{- \frac{1}{p-1}}),
\]
\noindent where we have used the fact that \( h(0) = 1 \). The first condition in \eqref{conditions_h_global_super_sol_lattice} is then satisfied with \( C := (1 - D)^{- \frac{1}{p-1}} \). In addition, \( h \) is a classical global solution to the Cauchy problem \eqref{ODE_h_existence_lattice}, hence the second and the third requirements in \eqref{conditions_h_global_super_sol_lattice} are trivially verified.

\medskip
\noindent In conclusion, if we choose \( t_0 \) as in \eqref{bound_t_0_existence_lattice}, then the function \( h \) defined in \eqref{analytical_expression_h_existence_lattice} verifies \eqref{conditions_h_global_super_sol_lattice}. As already discussed, this finishes the proof of the first part of the statement.

\medskip
\medskip
\noindent Now, we explicitly remark that the function \( \overline{u}(\cdot,0) \in C(\mathbb{Z}^N) \) satisfies the assumptions in \eqref{eq:HP_ID_G}. Indeed, from \eqref{positivity_super_solution_existence_lattice} it follows that \( \overline{u}(\cdot,0) > 0 \) in \( \mathbb{Z}^N \). Furthermore, the fact that \( \overline{u} \in L^\infty(\mathbb{Z}^N \times [0,T']) \), for any \( T' > 0 \), trivially implies that \( \overline{u}(\cdot,0) \in \ell^\infty(\mathbb{Z}^N) \).

\medskip
\noindent We can then consider problem \eqref{problem_HR_G} with initial datum \( \overline{u}(\cdot,0) \). In order to conclude the proof of the last part of the statement, our aim is to apply \cref{lemma_mild_supersol_1}; we then proceed by verifying that all the hypotheses of such result are verified, within the present framework.

\medskip
\noindent We first notice that from \eqref{definition_super_solution_existence_lattice} it follows that
\begin{equation}\label{overline_u_0_general_graphs_1_lattice}
\overline{u}(\cdot,0) = h(0) \, v(\cdot,t_0) = v(\cdot,t_0) = P_{t_0} \, v_0,
\end{equation}
where we have used that \( h(0) = 1 \) and that \( v(\cdot,\tau) = P_{\tau} \, v_0 \) for all \( \tau \geq 0 \). Let us now define the function \( z : \mathbb{Z}^N \times [0,+\infty) \to (0,+\infty) \) as
\begin{equation}\label{definition_z_general_graphs_lattice}
z(x,t) := v(x,t+t_0) \qquad \quad \text{for all } (x,t) \in \mathbb{Z}^N \times [0,+\infty).
\end{equation}
The fact that \( z \) is strictly positive follows directly from \eqref{inequalities_v_existence_lattice}. Now, by combining \eqref{definition_z_general_graphs_lattice} with \eqref{identity_P_t_P_s} and \eqref{overline_u_0_general_graphs_1_lattice}, for all \( t \geq 0 \) we obtain:
\[
z(\cdot,t) = v(\cdot,t+t_0) = P_{t+t_0} \, v_0 = P_t \left( P_{t_0} \, v_0 \right) = P_t \, \left( \overline{u}(\cdot,0) \right).
\]
By exploiting \eqref{definition_super_solution_existence_lattice} we then have, for all \( t \geq 0 \):
\begin{equation}\label{condition_for_application_result_1_lattice}
\overline{u}(\cdot,t) = h(t) \, z(\cdot,t) \qquad \text{and} \qquad z(\cdot,t) = P_t \, \left( \overline{u}(\cdot,0) \right).
\end{equation}
Now, from \eqref{inequalities_v_existence_lattice} we easily infer:
\[
0 < \| v(\cdot,t) \|_\infty \leq \frac{c}{t^{N/2}} \, \| v_0 \|_1 \hspace{2.5em} \text{for all } t \in (0,+\infty),
\]
so that, by exploiting \eqref{definition_z_general_graphs_lattice}, we obtain, for all \( t > 0 \):
\[
\left[ \| z(\cdot,t) \|_\infty \right]^{p-1} = \left[ \| v(\cdot,t+t_0) \|_\infty \right]^{p-1} \leq \left( \frac{c \, \|v_0\|_1}{(t+t_0)^{N/2}} \right)^{p-1}
= \frac{h'(t)}{\left[ h(t) \right]^p},
\]
where the last equality exploits \eqref{ODE_h_existence_lattice}. We can now multiply both sides of the obtained estimate by the positive value \( \left[ h(t) \right]^{p-1} \), inferring that, for all \( t > 0 \), it holds:
\begin{equation}\label{condition_for_application_result_2_lattice}
\frac{h'(t)}{h(t)} \geq \left[ h(t) \right]^{p-1} \left[ \| z(\cdot,t) \|_\infty \right]^{p-1} \equiv \left[ h(t) \, \| z(\cdot,t) \|_\infty \right]^{p-1}
= \left[ \| \overline{u}(\cdot,t) \|_\infty \right]^{p-1},
\end{equation}
where in the last equality we have made use of \eqref{condition_for_application_result_1_lattice}.

\medskip
\noindent Now, we know that the integer lattice is connected and locally finite; furthermore, the weighted degree is bounded, since it satisfies \( \operatorname{Deg}(x) = 1 \) for every \( x \in \mathbb{Z}^N \). Since the integer lattice is both connected and locally finite, \cref{remark_properties_SG_connected_locally_finite} ensures that the corresponding heat semigroup \( \{P_t\}_{t\ge 0} \) is both positivity preserving and \( \ell^\infty \)-contractive on the Banach space \( \ell^\infty(\mathbb{Z}^N) \). In addition, it is uniformly continuous, hence strongly continuous, on \( \ell^\infty(\mathbb{Z}^N) \).

\medskip
\noindent Finally, as already shown, we have \( h : [0,+\infty) \to (0,+\infty) \), with \( h \in C^1((0,+\infty)) \hspace{0.1em} \cap \hspace{0.1em} C^0([0,+\infty)) \) and \( h(0) = 1 \). These facts, together with \eqref{condition_for_application_result_1_lattice} and \eqref{condition_for_application_result_2_lattice}, allow us to apply \cref{lemma_mild_supersol_1}, which yields that \( \overline{u} \) satisfies the following inequality, pointwise in \( \mathbb{Z}^N \):
\begin{equation}\label{inequality_overline_u_1_lattice}
\overline{u}(\cdot,t) \ge P_t \left( \overline{u}(\cdot,0) \right) + \int_0^t \, P_{t-s} \left[ \overline{u}(\cdot,s)^p \right] \, ds \qquad \quad \text{for all } t \in (0,+\infty).
\end{equation}
Now, from before, we know that \( \overline{u} \in L^\infty(\mathbb{Z}^N \times [0,T']) \) for all \( T' > 0 \). By arguing exactly as in the last part of the proof of \cref{proposition_Duhamel_formula_HR_G}, we conclude that \( \overline{u} \in L^\infty([0,T'];\ell^\infty(\mathbb{Z}^N)) \), for all \( T' > 0 \). This fact, together with \eqref{inequality_overline_u_1_lattice}, allows us to conclude that, according to \cref{definition_mild_sol_sub_super_sol}, the function \( \overline{u} \) is a global mild supersolution to problem \eqref{problem_HR_G}, with initial datum \( \overline{u}(\cdot,0) \). This completes the proof. \hfill \( \square \)

\bigskip
\medskip

\noindent \emph{Proof of \emph{\cref{theorem_global_existence_lattice}}}. Working under the hypothesis \( p > 1 + 2/N \), let \( \underline{u} \equiv 0 \) in \( \mathbb{Z}^N \times [0,+\infty) \), and consider the function \( \overline{u} \) constructed in the proof of \cref{proposition_global_super_sol_lattice}. In particular, as shown there, \( \overline{u}(\cdot,0) \) is strictly positive in \( \mathbb{Z}^N \) and satisfies condition \eqref{eq:HP_ID_G}, so that it holds \( \overline{u}(\cdot,0) \in \ell^\infty(\mathbb{Z}^N) \). Therefore, by making use of \eqref{bound_ID_global_existence_lattice} and by exploiting the assumption \( u_0 \in C(\mathbb{Z}^N) \), we infer that also \( u_0 \in \ell^\infty(\mathbb{Z}^N) \). Now, thanks to the lower bound in \eqref{bound_ID_global_existence_lattice}, we conclude that \( u_0 \) respects the requirements in \eqref{eq:HP_ID_G}, and therefore is a valid initial datum for problem \eqref{problem_HR_G}.

\medskip
\noindent As already explained in the last part of the proof of \cref{proposition_global_super_sol_lattice}, within the framework of the integer lattice the heat semigroup \( \left\{ P_t \right\}_{t \geq 0} \) is positivity preserving on \( \ell^\infty(\mathbb{Z}^N) \), so that for all \( t > 0 \) it holds \( P_t \, u_0 \geq 0 \) in \( \mathbb{Z}^N \). This fact can be combined with \eqref{bound_ID_global_existence_lattice}, in order to conclude that \( \underline{u} \equiv 0 \) is a global mild subsolution to problem \eqref{problem_HR_G}, according to \cref{definition_mild_sol_sub_super_sol}.

\medskip
\noindent Furthermore, by exploiting both \eqref{inequality_overline_u_1_lattice} and \eqref{bound_ID_global_existence_lattice}, we deduce that for all \( t > 0 \) it holds:
\[
\overline{u}(\cdot,t) \ge P_t \left( \overline{u}(\cdot,0) \right) + \int_0^t \, P_{t-s} \left[ \overline{u}(\cdot,s)^p \right] \, ds
\geq P_t \, u_0 + \int_0^t \, P_{t-s} \left[ \overline{u}(\cdot,s)^p \right] \, ds.
\]
Finally, the fact that \( \overline{u}(\cdot,0) \geq u_0 \) in \( \mathbb{Z}^N \) allows us to conclude that \( \overline{u} \) is a global mild supersolution of problem \eqref{problem_HR_G}, according to \cref{definition_mild_sol_sub_super_sol}.

\medskip
\noindent Lastly, since \( \overline{u} \) is strictly positive in \( \mathbb{Z}^N \times [0,+\infty) \), we have:
\[
0 \equiv \underline{u}(\cdot,t) < \overline{u}(\cdot,t) \qquad \quad \text{for all } t \in [0,+\infty).
\]
We can then apply \cref{proposition_summarize_global_existence}, concluding that there exists a global classical solution \( u \) to problem \eqref{problem_HR_G}.

\medskip
\noindent Finally, under the additional assumption that \( u_0 \not \equiv 0 \), it is easy to infer that \( u \not \equiv 0 \), hence \( u \) is a nontrivial global classical solution to problem \eqref{problem_HR_G}. This yields the thesis. \hfill \( \square \)


\section{Proof of \cref{theorem_global_existence_G} and \cref{theorem_homogeneous_model_trees_existence}}\label{subsection_proofs_2}

\noindent We shall first state an auxiliary result, corresponding to {Proposition 2.4} in \cite{PSa}.

\begin{lemma}\label{lemma_Punzo_Sacco_global_existence}
\noindent Let \( (X,\omega,\mu) \) be a weighted graph, and consider the corresponding heat kernel \( p = p(x,y,t) \). Assume that \( \lambda_1(X) > 0 \) and
\[
\mu_{min} := \inf_{x \in X} \mu(x) > 0.
\]
\noindent Then the heat kernel satisfies the following inequality:
\[
p(x,y,t) \leq \frac{1}{\mu_{min}} \, e^{- \lambda_1(X) \, t} \hspace{2.5em} \text{for all } \hspace{0.1em} (x,y,t) \in X \times X \times (0,+\infty).
\]
\end{lemma}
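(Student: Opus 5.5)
The plan is to go through the spectral theory of $-\Delta$ on $\ell^2(X,\mu)$: first an $\ell^2$ decay estimate for $P_t$, then a pointwise bound for the kernel via the semigroup property. The factor $1/\mu_{min}$ comes from comparing $\ell^2$ and pointwise values of delta functions.

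\textbf{Step 1 ($\ell^2$ decay).} Let $L$ be the nonnegative self-adjoint operator on $\ell^2(X,\mu)$ associated with $-\Delta$, namely the Friedrichs extension built from finitely supported functions. By the standard construction in \cite{KLWb,W}, the minimal heat kernel $p$ is the integral kernel of $e^{-tL}$. Concretely, for $\varphi\in\ell^2(X,\mu)$ we have $e^{-tL}\varphi(x)=\sum_y p(x,y,t)\varphi(y)\mu(y)$. Since $\sigma(L)\subseteq[\lambda_1(X),\infty)$, the spectral theorem gives
\[
\|e^{-tL}\|_{\ell^2\to\ell^2}\le e^{-\lambda_1(X)t}.
\]

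\textbf{Step 2 (kernel as inner product).} Set $\delta_x:=\mathds{1}_{\{x\}}/\mu(x)$, so that $\|\delta_x\|_2^2=1/\mu(x)$. Then $p(x,\cdot,s)=e^{-sL}\delta_x$. By property f) and symmetry c) of \cref{proposition_properties_HK_G},
\[
p(x,y,t)=\sum_z p(x,z,t/2)\,p(z,y,t/2)\,\mu(z)=\langle e^{-\frac t2 L}\delta_x,\,e^{-\frac t2 L}\delta_y\rangle_{\ell^2(\mu)}.
\]
Equivalently, $p(x,y,t)=\langle e^{-tL}\delta_x,\delta_y\rangle$.

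\textbf{Step 3 (conclusion).} Cauchy--Schwarz and Step 1 give
\[
p(x,y,t)\le \|e^{-tL}\|\,\|\delta_x\|_2\|\delta_y\|_2\le e^{-\lambda_1 t}\,\frac{1}{\sqrt{\mu(x)\mu(y)}}\le \frac{1}{\mu_{min}}e^{-\lambda_1(X)t}.
\]
This is exactly the claimed inequality.

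\textbf{Main obstacle.} The delicate point is the identification in Step 1: one must show that the minimal nonnegative solution of \eqref{problem_solved_by_p} coincides with the kernel of the $\ell^2$ semigroup generated by the Friedrichs extension, and that $\lambda_1(X)$ is the bottom of the spectrum of this particular self-adjoint realization. I would cite this identification from \cite{KLWb,Woj} rather than reprove it. If needed, it can be obtained by exhausting $X$ with finite sets $\Omega_n$. The Dirichlet heat kernels $p_{\Omega_n}$ increase to $p$, and each satisfies the bound above with $\lambda_1(\Omega_n)\ge\lambda_1(X)$, by the variational characterization via the Rayleigh quotient over finitely supported functions. The bound then passes to the limit.
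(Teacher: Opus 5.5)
The paper gives no proof of this lemma. It quotes it as Proposition~2.4 of \cite{PSa} and uses it as a black box. Your argument is correct and gives a self-contained proof.

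The structure is the standard spectral route:
\begin{itemize}
\item write $p(x,y,t)=\langle e^{-\frac t2 L}\delta_x,\,e^{-\frac t2 L}\delta_y\rangle$ using property f) and symmetry;
\item use $\|e^{-sL}\|_{2\to 2}\le e^{-\lambda_1(X)s}$ together with $\|\delta_x\|_2^2=1/\mu(x)$;
\item apply Cauchy--Schwarz.
\end{itemize}
This actually gives the sharper bound $p(x,y,t)\le e^{-\lambda_1(X)t}/\sqrt{\mu(x)\mu(y)}$. In that bound, $\mu_{min}>0$ enters only through the final comparison $\sqrt{\mu(x)\mu(y)}\ge\mu_{min}$, apart from fixing the realization that defines $\lambda_1(X)$ (see below). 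Likewise, $\lambda_1(X)>0$ is needed only for the bound to decay, not for it to hold.

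The identification you flag as the main obstacle is the only non-elementary input, and it causes no trouble in this setting. Since the graph is locally finite and $\inf\mu>0$, $-\Delta$ restricted to finitely supported functions is essentially self-adjoint. Indeed, suppose $u\in\ell^2(X,\mu)$ satisfies $\Delta u=u$ and $u(x_0)>0$. Then some neighbour of $x_0$ carries a strictly larger value of $u$. Iterating gives an infinite path of distinct vertices along which $u\ge u(x_0)$. This contradicts $u\in\ell^2(X,\mu)$, because $\mu\ge\mu_{min}$ makes every infinite path have infinite measure.

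Hence the $\ell^2$-spectrum in the paper's definition of $\lambda_1(X)$ refers to a unique self-adjoint realization, namely the Friedrichs one. In particular, $\lambda_1(X)$ equals the infimum of the Rayleigh quotient over finitely supported functions.

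Your exhaustion variant then needs only two facts. The first is $\lambda_1(\Omega_n)\ge\lambda_1(X)$, from the Rayleigh quotient. The second is the monotone convergence $p_{\Omega_n}\uparrow p$, which is the standard exhaustion construction of the minimal heat kernel (see, e.g., \cite{Woj,KLWb}). So either version of your argument proves the lemma without appealing to \cite{PSa}.
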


\begin{proposition}\label{proposition_global_super_sol_general_graphs}
Let \( (X,\omega,\mu) \) be a connected and locally finite weighted graph, and assume that the weighted degree is bounded, namely \eqref{bounded_weighted_degree_2} holds.
Suppose also that \( \lambda_1(X) > 0 \), and
\[
\mu_{min} := \inf_{x \in X} \mu(x) > 0.
\]
\noindent Then, for all \( p > 1 \), equation \eqref{e31} admits a nontrivial global classical supersolution \( \overline{u} \). In addition, \( \overline{u} \) is a global mild supersolution of problem \eqref{problem_HR_G}, with initial datum \( \overline{u}(\cdot,0) \).
\end{proposition}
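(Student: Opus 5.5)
The plan is to mimic the proof of \cref{proposition_global_super_sol_lattice}. The polynomial decay of the lattice heat kernel is replaced by the exponential decay of \cref{lemma_Punzo_Sacco_global_existence}.

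\textbf{Step 1: the heat-equation profile.} Fix $v_0\in\ell^1(X,\mu)$ with $v_0>0$ pointwise. Since $\mu_{min}>0$, we have $\|v_0\|_\infty\le \|v_0\|_1/\mu_{min}$, so $v_0\in\ell^\infty(X)$. Set $v(\cdot,t):=P_t v_0$. By \cref{theorem_well_posedness_H_G}, $v$ is a bounded solution of \eqref{problem_H_G}, and $v>0$ for $t>0$ by property b) of \cref{proposition_properties_HK_G}. By \cref{lemma_Punzo_Sacco_global_existence},
\[
0<v(x,t)\le \frac{\|v_0\|_1}{\mu_{min}}\,e^{-\lambda_1 t}=:A\,e^{-\lambda_1 t}\qquad (x,t)\in X\times(0,+\infty).
\]
Even though stochastic completeness is not assumed, this bound only needs the heat kernel estimate and positivity.

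\textbf{Step 2: the separable ansatz.} Define $\overline u(x,t):=h(t)\,v(x,t+t_0)$. Exactly as in the lattice case, $\overline u_t-\Delta\overline u=h'(t)\,v(x,t+t_0)$, because $\Delta$ is a finite sum by local finiteness. So it suffices to have
\[
\frac{h'}{h^p}=\bigl(A e^{-\lambda_1(t+t_0)}\bigr)^{p-1},\qquad h(0)=1.
\]
Integrating gives
\[
h(t)^{1-p}=1-\frac{(p-1)A^{p-1}e^{-\lambda_1(p-1)t_0}}{\lambda_1(p-1)}\bigl(1-e^{-\lambda_1(p-1)t}\bigr)=1-D\bigl(1-e^{-\lambda_1(p-1)t}\bigr),
\]
where $D=A^{p-1}e^{-\lambda_1(p-1)t_0}/\lambda_1$. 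Since $\lambda_1>0$, we get $D\in(0,1)$ for every $p>1$ by choosing $t_0$ large, or alternatively by taking $t_0=1$ and $\|v_0\|_1$ small. Then $h$ is $C^1$, increasing, and bounded with values in $[1,(1-D)^{-1/(p-1)})$. This is the only point where $\lambda_1>0$ enters. It is also why no restriction on $p$ is needed: the exponential rate is integrable regardless of $p$.

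\textbf{Step 3: classical and mild supersolution properties.} The regularity conditions \eqref{reg_sol_1_G} and \eqref{reg_sol_2_G} follow from the boundedness of $h$ and $v$, and positivity follows from Step 1. The differential inequality follows by multiplying the ODE by $h^p v$ and using $v^{p-1}\le (Ae^{-\lambda_1(t+t_0)})^{p-1}$. For the mild part, the plan is to apply \cref{lemma_mild_supersol_1} with $z(\cdot,t)=v(\cdot,t+t_0)=P_t(\overline u(\cdot,0))$, using the semigroup identity \eqref{identity_P_t_P_s}, and $f=h$ with $h(0)=1$. The key inequality $h'/h\ge\|\overline u(\cdot,t)\|_\infty^{p-1}$ follows as in \eqref{condition_for_application_result_2_lattice}.

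\textbf{Step 4: checking the hypotheses of \cref{lemma_mild_supersol_1}.} These require the semigroup to be strongly continuous, positivity preserving and $\ell^\infty$-contractive. The last two come from \cref{remark_properties_SG_connected_locally_finite}. Strong continuity is exactly where the bounded weighted degree \eqref{bounded_weighted_degree_2} is used: by the argument in \cref{lemma2}, $\Delta\in\mathcal L(\ell^\infty(X))$, so $\{P_t\}$ is uniformly and hence strongly continuous.

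The main obstacle I expect is consistency here. The semigroup generated by the bounded operator $\Delta$ must coincide with the heat-kernel semigroup \eqref{definition_convolution_HK_general_graphs}; the paper already takes this for granted in \cref{lemma2}, so I will rely on it. Finally, $\overline u\in L^\infty([0,T'];\ell^\infty(X))$ is immediate, which concludes that $\overline u$ is a global mild supersolution with datum $\overline u(\cdot,0)$.
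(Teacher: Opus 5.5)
Your proposal is correct and follows the paper's proof essentially step for step. It uses the same profile $v=P_t v_0$ bounded via \cref{lemma_Punzo_Sacco_global_existence}, the same ansatz $\overline u=h(t)\,v(\cdot,t+t_0)$ with the same ODE and the same constant $D$, and the same application of \cref{lemma_mild_supersol_1} with $z(\cdot,t)=P_t(\overline u(\cdot,0))$, strong continuity coming from the boundedness of $\Delta$ on $\ell^\infty(X)$. Your one deviation, noting that $\mu_{min}>0$ already gives $\ell^1(X,\mu)\subset\ell^\infty(X)$, is a minor sharpening of the paper, which simply assumes $v_0\in\ell^1(X,\mu)\cap\ell^\infty(X)$.
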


\noindent \emph{Proof.} For the ease of notation, throughout the proof we shall denote the bottom of the spectrum \( \lambda_1(X) \) simply by \( \lambda_1 \).

\medskip
\noindent We fix an arbitrary \( p > 1 \), and we consider a function \( v_0 \in \ell^1(X,\mu) \cap \ell^\infty(X) \), with \( v_0 > 0 \) pointwise in \( X \). From \cref{theorem_well_posedness_H_G}, we know that the function \( v : X \times [0,+\infty) \to \mathbb{R} \), defined as
\[
v(x,t) := (P_t \, v_0)(x) = \sum_{y \in X} p(x,y,t) \, v_0(y) \, \mu(y) \hspace{2.0em} \text{for all } \hspace{0.1em} (x,t) \in X \times [0,+\infty),
\]
\noindent is a bounded solution to problem \eqref{problem_H_G}.

\medskip
\noindent Under our assumptions on \( \lambda_1 \) and \( \mu_{\min} \), \cref{lemma_Punzo_Sacco_global_existence} can be applied, yielding the following bound for the heat kernel:
\[
p(x,y,t) \leq \frac{1}{\mu_{min}} \, e^{- \lambda_1 t} \hspace{2.5em} \text{for all } \hspace{0.1em} (x,y,t) \in X \times X \times (0,+\infty).
\]
\noindent Thanks to this inequality, we obtain, for all \( (x,t) \in X \times (0,+\infty) \):
\[
0 < \sum_{y \in X} p(x,y,t) \, v_0(y) \, \mu(y) \leq \frac{1}{\mu_{min}} \, e^{- \lambda_1 t} \sum_{y \in X} v_0(y) \, \mu(y) = \frac{e^{- \lambda_1 t}}{\mu_{min}} \, \| v_0 \|_1
< +\infty,
\]
\noindent where the first inequality exploits the strict positivity of \( v_0 \) and \( \mu \), together with property b) in \cref{proposition_properties_HK_G}, while the last one holds since, by assumption, \( v_0 \in \ell^1(X,\mu) \). In other words, we have:
\begin{equation}\label{inequalities_v_existence_G}
0 < v(x,t) \leq \frac{1}{\mu_{min}} \, e^{- \lambda_1 t} \, \| v_0 \|_1 \hspace{2.5em} \text{for all } (x,t) \in X \times (0,+\infty).
\end{equation}
\noindent Now, we take a scalar function \( h \), defined at least in a subset of \( [0,+\infty) \), and a positive time instant \( t_0 > 0 \). Moreover, we set
\begin{equation}\label{definition_super_solution_existence_G}
\overline{u}(x,t) := h(t) \, v(x,t+t_0).
\end{equation}
\noindent Our aim is to find the conditions to be imposed on \( t_0 \) and \( h \) which ensure that this function is a global supersolution of the equation under consideration.

\medskip
\noindent In particular, we claim that, under the following conditions:
\begin{equation}\label{conditions_h_global_super_sol_G}
\begin{gathered}
h : [0,+\infty) \to (0,C) \hspace{2.5em} \text{for some constant } C > 0, \\[7pt]
h \in C^1((0,+\infty)) \cap C^0([0,+\infty)), \\[7pt]
\frac{h'(t)}{\left[ h(t) \right]^p} = \left( \frac{\| v_0 \|_1}{\mu_{min}} \right)^{p-1} \, e^{-(p-1)\lambda_1 (t+t_0)} \hspace{2.5em} \text{for all } t > 0,
\end{gathered}
\end{equation}
\noindent the function \( \overline{u} \), given by \eqref{definition_super_solution_existence_G}, is a nontrivial global supersolution of the equation \( u_t - \Delta u = u^p \).

\medskip
\noindent We start by noticing that, if the first condition in \eqref{conditions_h_global_super_sol_G} holds, then the relation \eqref{definition_super_solution_existence_G} is valid for all \(  (x,t) \in X \times [0,+\infty) \), so that \( \overline{u} : X \times [0,+\infty) \to \mathbb{R} \) and
\begin{equation}\label{positivity_super_solution_existence_G}
\overline{u}(x,t) = h(t) \, v(x,t+t_0) > 0 \hspace{2.5em} \text{for all } (x,t) \in X \times [0,+\infty),
\end{equation}
\noindent where we have also exploited the first inequality in \eqref{inequalities_v_existence_G}. In particular, it follows from \eqref{positivity_super_solution_existence_G} that \( u \) is nontrivial.

\medskip
\noindent Furthermore, by arguing exactly as in the proof of \cref{proposition_global_super_sol_lattice}, we conclude that also \eqref{reg_sol_1_G} and \eqref{reg_sol_2_G} hold for \( \overline{u} \), with \( T = +\infty \).

\medskip
\noindent We are only left to verify that \eqref{conditions_h_global_super_sol_G} implies that
\begin{equation}\label{inequality_overline_u_existence_G}
\overline{u}_t - \Delta \overline{u} \geq \overline{u}^p \hspace{2.5em} \text{in} \hspace{0.5em} X \times (0,+\infty).
\end{equation}
\noindent In order to do so, we exploit \eqref{definition_super_solution_existence_G} in order to compute formally the partial time derivative and the discrete Laplacian of the function \( \overline{u} \). More specifically, we get both
\[
\overline{u}_t(x,t) = \frac{\partial}{\partial t} \left[ h(t) \, v(x,t+t_0) \right] = h'(t) \, v(x,t+t_0) + h(t) \, v_t(x,t+t_0)
\]
\noindent and
\[
\begin{aligned}
\Delta \overline{u} (x,t) & = \frac{1}{\mu(x)} \sum_{y \in X} \left[ u(y,t) - u(x,t) \right] \omega(x,y) \\
& = \frac{1}{\mu(x)} \sum_{y \in X} \left[ h(t) \, v(y,t+t_0) - h(t) \, v(x,t+t_0) \right] \omega(x,y) \\
& = \frac{h(t)}{\mu(x)} \sum_{y \in X} \left[ v(y,t+t_0) - v(x,t+t_0) \right] \omega(x,y) \\
& = h(t) \, \Delta v (x,t+t_0).
\end{aligned}
\]
\noindent Therefore, we obtain:
\[
\begin{aligned}
\overline{u}_t(x,t) - \Delta \overline{u} (x,t) & = h'(t) \, v(x,t+t_0) + h(t) \, v_t(x,t+t_0) - h(t) \, \Delta v (x,t+t_0) \\
& = h'(t) \, v(x,t+t_0) + h(t) \left[ v_t(x,t+t_0) - \Delta v (x,t+t_0) \right] \\
& = h'(t) \, v(x,t+t_0),
\end{aligned}
\]
\noindent where the last equality is due to the fact that \( v \) solves problem \eqref{problem_H_G}, hence in particular it holds
\[
v_t(x,t) - \Delta v (x,t) = 0 \hspace{2.5em} \text{for all } (x,t) \in X \times (0,+\infty).
\]
\noindent The inequality \eqref{inequality_overline_u_existence_G} is then equivalent to the following one:
\begin{equation}\label{equivalence_super_sol_existence_G}
h'(t) \, v(x,t+t_0) \geq \left[ h(t) \, v(x,t+t_0) \right]^p \hspace{2.5em} \text{for all } (x,t) \in X \times (0,+\infty).
\end{equation}
\noindent Now, the third requirement in \eqref{conditions_h_global_super_sol_G} yields, for all \( (x,t) \in X \times (0,+\infty) \):
\[
\frac{h'(t)}{\left[ h(t) \right]^p} = \left( \frac{\| v_0 \|_1}{\mu_{min}} \right)^{p-1} \, e^{-(p-1)\lambda_1 (t+t_0)} \equiv \left[ \frac{\| v_0 \|_1}{\mu_{min}} \, e^{- \lambda_1 (t+t_0)} \right]^{p-1}
\geq \left[ v(x,t+t_0) \right]^{p-1},
\]
\noindent where the last inequality holds thanks to the upper bound in \eqref{inequalities_v_existence_G} and to the fact that \( p > 1 \). Moreover, the first condition in \eqref{conditions_h_global_super_sol_G} implies that \( h > 0 \) in \( [0,+\infty) \), and from \eqref{inequalities_v_existence_G} we also know that \( v > 0 \) in \( X \times (0,+\infty) \). Therefore, we can multiply both sides of the obtained inequality by \( [h(t)]^p \, v(x,t+t_0) > 0 \), obtaining:
\[
h'(t) \, v(x,t+t_0) \geq \left[ h(t) \, v(x,t+t_0) \right]^p \hspace{2.5em} \text{for all } (x,t) \in X \times (0,+\infty),
\]
\noindent which exactly corresponds to \eqref{equivalence_super_sol_existence_G}. As discussed above, \eqref{inequality_overline_u_existence_G} is then obtained.

\medskip
\noindent In conclusion, the claim has been proved: under \eqref{conditions_h_global_super_sol_G}, the function \( \overline{u} \) defined in \eqref{definition_super_solution_existence_G} verifies all the requirements of a nontrivial global supersolution of the equation \( u_t - \Delta u = u^p \). In other words, the thesis follows if we exhibit a function \( h \) satisfying all the conditions in \eqref{conditions_h_global_super_sol_G}, for some \( t_0 > 0 \).

\medskip
\noindent We shall do so by coupling the third requirement in \eqref{conditions_h_global_super_sol_G} with a positive initial condition for \( h \); for the sake of simplicity, we choose \( h(0) = 1 \). Therefore, we look for a function \( h \) solving the following Cauchy problem:
\begin{equation}\label{ODE_h_existence_G}
\begin{cases}
\frac{h'(t)}{\left[ h(t) \right]^p} = \left( \frac{\| v_0 \|_1}{\mu_{min}} \right)^{p-1} \, e^{-(p-1)\lambda_1 (t+t_0)} \hspace{1.5em} \text{for all } t > 0 \\
h(0) = 1.
\end{cases}
\end{equation}
\noindent We proceed by finding the analytical expression of the solution to this problem; we will then investigate the values of \( t_0 \) for which such solution exists globally in time. More specifically, after fixing \( t > 0 \), we integrate both sides in \( (0,t) \) and exploit the initial condition, obtaining:
\[
\int_{0}^{t} \frac{h'(t)}{\left[ h(t) \right]^p} \, ds = \int_{1}^{h(t)} \frac{1}{h^p} \, dh
= \left( \frac{\| v_0 \|_1}{\mu_{min}} \right)^{p-1} \, e^{-(p-1)\lambda_{1} t_0} \int_{0}^{t} e^{-(p-1)\lambda_{1} s} \, ds,
\]
\noindent where the first equality simply corresponds to a change of variable. Performing direct computations on the last equality yields the following:
\[
\frac{\left[ h(t) \right]^{1-p} - 1}{1-p}
= \left( \frac{\| v_0 \|_1}{\mu_{min}} \right)^{p-1} \, e^{-(p-1) \lambda_{1} t_0} \, \frac{1-e^{-(p-1) \lambda_{1} t}}{(p-1) \lambda_{1}}.
\]
\noindent Rearranging the terms, we get
\[
\left[ h(t) \right]^{1-p}
= 1 - \left[ \frac{1}{\lambda_1} \, \left( \frac{\| v_0 \|_1}{\mu_{min}} \right)^{p-1} \, e^{-(p-1) \lambda_{1} t_0} \right] \left( 1-e^{-(p-1) \lambda_{1} t} \right)
\equiv 1 - D \left( 1-e^{-(p-1) \lambda_{1} t} \right),
\]
\noindent where we have set
\[
D := \frac{1}{\lambda_1} \, \left( \frac{\| v_0 \|_1}{\mu_{min}} \right)^{p-1} \, e^{-(p-1) \lambda_{1} t_0}.
\]
\noindent Therefore, the analytical expression of the solution is
\begin{equation}\label{analytical_expression_h_existence_G}
h(t) = \left[ 1 - D \left( 1-e^{-(p-1) \lambda_{1} t} \right) \right]^{- \frac{1}{p-1}}.
\end{equation}
\noindent Now, notice that the assumptions on \( v_0, \lambda_1 \) and \( \mu_{min} \) ensure that \( D \in (0,+\infty) \). In particular, the condition \( D \in (0,1) \) is equivalent to the following bound for \( t_0 \):
\[
t_0 > \frac{1}{(p-1) \lambda_{1}} \, \log \left[ \frac{1}{\lambda_1} \, \left( \frac{\| v_0 \|_1}{\mu_{min}} \right)^{p-1} \right].
\]
\noindent The term at the right-hand side of this inequality could be negative; this is the case whenever the following condition holds:
\[
0 < \| v_0 \|_1 < \mu_{\min} \, \lambda_1^{\frac{1}{p-1}}.
\]
\noindent In this scenario we have \( D \in (0,1) \) for any positive value of \( t_0 \). In conclusion, if \( t_0 \) satisfies
\begin{equation}\label{bound_t_0_existence_G}
t_0 > \max \left\{ 0, \frac{1}{(p-1) \lambda_{1}} \, \log \left[ \frac{1}{\lambda_1} \, \left( \frac{\| v_0 \|_1}{\mu_{min}} \right)^{p-1} \right] \right\},
\end{equation}
\noindent then \( D \in (0,1) \), and therefore the function \( h \) in \eqref{analytical_expression_h_existence_G} is well-defined for all nonnegative times. More specifically, if \( D \in (0,1) \), then it is trivial to see that \( h(t) > 0 \) for all \( t > 0 \); furthermore, since \( p > 1 \) and \( \lambda_1 > 0 \), then \( h \) is monotone increasing in \( [0,+\infty) \), and it exhibits an asymptote as \( t \to +\infty \), namely:
\[
\lim_{t \to +\infty} h(t) = (1 - D)^{- \frac{1}{p-1}} \hspace{0.3em} \in \hspace{0.3em} (1,+\infty).
\]
\noindent Therefore, we have:
\[
h : [0,+\infty) \to [1, (1 - D)^{- \frac{1}{p-1}}),
\]
\noindent where we have used the fact that \( h(0) = 1 \). The first condition in \eqref{conditions_h_global_super_sol_G} is then satisfied with \( C := (1 - D)^{- \frac{1}{p-1}} \). In addition, \( h \) is a global classical solution to the Cauchy problem \eqref{ODE_h_existence_G}, hence the second and the third requirements in \eqref{conditions_h_global_super_sol_G} are trivially verified.

\medskip
\noindent In conclusion, if we choose \( t_0 \) as in \eqref{bound_t_0_existence_G}, then the function \( h \) defined in \eqref{analytical_expression_h_existence_G} verifies \eqref{conditions_h_global_super_sol_G}. As already discussed, this finishes the proof of the first part of the statement.

\medskip
\medskip
\noindent Now, we explicitly remark that the function \( \overline{u}(\cdot,0) \in C(X) \) satisfies the assumptions in \eqref{eq:HP_ID_G}. Indeed, from \eqref{positivity_super_solution_existence_G} it follows that \( \overline{u}(\cdot,0) > 0 \) in \( X \). Furthermore, the fact that \( \overline{u} \in L^\infty(X \times [0,T']) \), for any \( T' > 0 \), trivially implies that \( \overline{u}(\cdot,0) \in \ell^\infty(X) \).

\medskip
\noindent We can then consider problem \eqref{problem_HR_G} with initial datum \( \overline{u}(\cdot,0) \). In order to conclude the proof of the last part of the statement, our aim is to apply \cref{lemma_mild_supersol_1}; we then proceed by verifying that all the hypotheses of such result are verified, within the present framework.

\medskip
\noindent We first notice that from \eqref{definition_super_solution_existence_G} it follows that
\begin{equation}\label{overline_u_0_general_graphs_1}
\overline{u}(\cdot,0) = h(0) \, v(\cdot,t_0) = v(\cdot,t_0) = P_{t_0} \, v_0,
\end{equation}
where we have used that \( h(0) = 1 \) and that \( v(\cdot,\tau) = P_{\tau} \, v_0 \) for all \( \tau \geq 0 \). Let us now define the function \( z : X \times [0,+\infty) \to (0,+\infty) \) as
\begin{equation}\label{definition_z_general_graphs}
z(x,t) := v(x,t+t_0) \qquad \quad \text{for all } (x,t) \in X \times [0,+\infty).
\end{equation}
The fact that \( z \) is strictly positive follows directly from \eqref{inequalities_v_existence_G}. Now, by combining \eqref{definition_z_general_graphs} with \eqref{identity_P_t_P_s} and \eqref{overline_u_0_general_graphs_1}, for all \( t \geq 0 \) we obtain:
\[
z(\cdot,t) = v(\cdot,t+t_0) = P_{t+t_0} \, v_0 = P_t \left( P_{t_0} \, v_0 \right) = P_t \, \left( \overline{u}(\cdot,0) \right).
\]
By exploiting \eqref{definition_super_solution_existence_G} we then have, for all \( t \geq 0 \):
\begin{equation}\label{condition_for_application_result_1_G}
\overline{u}(\cdot,t) = h(t) \, z(\cdot,t) \qquad \text{and} \qquad z(\cdot,t) = P_t \, \left( \overline{u}(\cdot,0) \right).
\end{equation}
Now, from \eqref{inequalities_v_existence_G} we easily infer:
\[
0 < \| v(\cdot,t) \|_\infty \leq \frac{\| v_0 \|_1}{\mu_{min}} \, e^{- \lambda_1 t} \hspace{2.5em} \text{for all } t \in (0,+\infty),
\]
so that, by exploiting \eqref{definition_z_general_graphs}, we obtain, for all \( t > 0 \):
\[
\left[ \| z(\cdot,t) \|_\infty \right]^{p-1} = \left[ \| v(\cdot,t+t_0) \|_\infty \right]^{p-1} \leq \left( \frac{\| v_0 \|_1}{\mu_{min}} \right)^{p-1} \, e^{- (p-1) \lambda_1 (t+t_0)}
= \frac{h'(t)}{\left[ h(t) \right]^p},
\]
where the last equality exploits \eqref{ODE_h_existence_G}. We can now multiply both sides of the obtained estimate by the positive value \( \left[ h(t) \right]^{p-1} \), inferring that, for all \( t > 0 \), it holds:
\begin{equation}\label{condition_for_application_result_2_G}
\frac{h'(t)}{h(t)} \geq \left[ h(t) \right]^{p-1} \left[ \| z(\cdot,t) \|_\infty \right]^{p-1} \equiv \left[ h(t) \, \| z(\cdot,t) \|_\infty \right]^{p-1}
= \left[ \| \overline{u}(\cdot,t) \|_\infty \right]^{p-1},
\end{equation}
where in the last equality we have made use of \eqref{condition_for_application_result_1_G}.

\medskip
\noindent Since the graph is assumed to be connected and locally finite, \cref{remark_properties_SG_connected_locally_finite} ensures that the heat semigroup \( \{P_t\}_{t\ge 0} \) is both positivity preserving and \( \ell^\infty \)-contractive. In addition, it is uniformly continuous, hence strongly continuous, on \( \ell^\infty(X) \).

\medskip
\noindent Finally, as already shown, we have \( h : [0,+\infty) \to (0,+\infty) \), with \( h \in C^1((0,+\infty)) \hspace{0.1em} \cap \hspace{0.1em} C^0([0,+\infty)) \) and \( h(0) = 1 \). These facts, together with \eqref{condition_for_application_result_1_G} and \eqref{condition_for_application_result_2_G}, allow us to apply \cref{lemma_mild_supersol_1}, which yields that \( \overline{u} \) satisfies the following inequality, pointwise in \( X \):
\begin{equation}\label{inequality_overline_u_1_G}
\overline{u}(\cdot,t) \ge P_t \left( \overline{u}(\cdot,0) \right) + \int_0^t \, P_{t-s} \left[ \overline{u}(\cdot,s)^p \right] \, ds \qquad \quad \text{for all } t \in (0,+\infty).
\end{equation}
Now, as shown before, we know that \( \overline{u} \in L^\infty(X \times [0,T']) \) for all \( T' > 0 \). By arguing exactly as in the last part of the proof of \cref{proposition_Duhamel_formula_HR_G}, we conclude that \( \overline{u} \in L^\infty([0,T'];\ell^\infty(X)) \), for all \( T' > 0 \). This fact, together with \eqref{inequality_overline_u_1_G}, allows us to conclude that, according to \cref{definition_mild_sol_sub_super_sol}, the function \( \overline{u} \) is a global mild supersolution to problem \eqref{problem_HR_G}, with initial datum \( \overline{u}(\cdot,0) \). This completes the proof. \hfill \( \square \)

\medskip
\noindent \emph{Proof of \emph{\cref{theorem_global_existence_G}}}. Let \( \underline{u} \equiv 0 \) in \( X \times [0,+\infty) \), and consider the function \( \overline{u} \) constructed in the proof of \cref{proposition_global_super_sol_general_graphs}. In particular, as shown there, \( \overline{u}(\cdot,0) \) is strictly positive in \( X \) and satisfies condition \eqref{eq:HP_ID_G}, so that it holds \( \overline{u}(\cdot,0) \in \ell^\infty(X) \). Therefore, by making use of \eqref{bound_ID_global_existence_G} and by exploiting the assumption \( u_0 \in C(X) \), we infer that also \( u_0 \in \ell^\infty(X) \). Now, thanks to the lower bound in \eqref{bound_ID_global_existence_G}, we conclude that \( u_0 \) respects the requirements in \eqref{eq:HP_ID_G}, and therefore is a valid initial datum for problem \eqref{problem_HR_G}.

\medskip
\noindent Under our hypotheses the heat semigroup \( \left\{ P_t \right\}_{t \geq 0} \) is positivity preserving on \( \ell^\infty(X) \), so that for all \( t > 0 \) it holds \( P_t \, u_0 \geq 0 \) in \( X \). This fact can be combined with \eqref{bound_ID_global_existence_G}, in order to conclude that \( \underline{u} \equiv 0 \) is a global mild subsolution to problem \eqref{problem_HR_G}, according to \cref{definition_mild_sol_sub_super_sol}.

\medskip
\noindent Furthermore, by exploiting both \eqref{inequality_overline_u_1_G} and \eqref{bound_ID_global_existence_G}, we deduce that for all \( t > 0 \) it holds:
\[
\overline{u}(\cdot,t) \ge P_t \left( \overline{u}(\cdot,0) \right) + \int_0^t \, P_{t-s} \left[ \overline{u}(\cdot,s)^p \right]
\geq P_t \, u_0 + \int_0^t \, P_{t-s} \left[ \overline{u}(\cdot,s)^p \right] \, ds.
\]
Finally, the fact that \( \overline{u}(\cdot,0) \geq u_0 \) in \( X \) allows us to conclude that \( \overline{u} \) is a global mild supersolution of problem \eqref{problem_HR_G}, according to \cref{definition_mild_sol_sub_super_sol}.

\medskip
\noindent Lastly, since \( \overline{u} \) is strictly positive in \( X \times [0,+\infty) \), we have:
\[
0 \equiv \underline{u}(\cdot,t) < \overline{u}(\cdot,t) \qquad \quad \text{for all } t \in [0,+\infty).
\]
We can then apply \cref{proposition_summarize_global_existence}, concluding that there exists a global classical solution \( u \) to problem \eqref{problem_HR_G}. \hfill \( \square \)

%

\bigskip

\noindent \emph{Proof of \emph{\cref{theorem_homogeneous_model_trees_existence}}}. First, by definition, \( (\mathbb{T}_b,\omega_0,\mu_1) \) is connected, and thanks to \cref{remark_model_trees_locally_finite} we know that such graph is also locally finite. In addition, in view of \eqref{e20}, $\lambda_1(\mathbb{T}_b)>0.$

\medskip
\noindent Now, as already noticed in \cref{remark_model_trees_locally_finite}, the degree function has the following expression:
\[
\deg(x) =
\begin{cases}
b & \text{ if } x = x_0 \\
b + 1 & \text{ if } x \in \mathbb{T}_b \setminus \{ x_0 \},
\end{cases}
\]
\noindent and by exploiting both the definition of the weighted degree and the fact that \( \mu_1 \equiv 1 \) on \( \mathbb{T}_b \) we infer that
\[
\operatorname{Deg}(x) = \frac{\deg(x)}{\mu_1(x)} \equiv \deg(x) \hspace{2.5em} \text{for all } x \in \mathbb{T}_b.
\]
\noindent In particular, we get:
\[
\sup_{x \in \mathbb{T}_b} \operatorname{Deg}(x) = \sup_{x \in \mathbb{T}_b} \, \deg(x) = b + 1 < +\infty.
\]
\noindent Finally, since \( \mu_1 \equiv 1 \) on \( \mathbb{T}_b \), then the infimum of the node measure is obviously positive. In conclusion, all the hypotheses of both \cref{proposition_global_super_sol_general_graphs} and \cref{theorem_global_existence_G} are satisfied, and the thesis follows. \hfill \( \square \)






\addcontentsline{toc}{section}{Bibliography}

\begin{thebibliography}{99}



\bibitem{BLev}
C. Bandle, H. A. Levine, \emph{Fujita type phenomena for reaction - diffusion equations with convection like terms}, Diff. Integral Eq. \textbf{7} (1994), 1169–1193.


\bibitem{BPT}
C. Bandle, M.A. Pozio,  A. Tesei, \emph{The Fujita exponent for the Cauchy problem in the hyperbolic space}, J. Differ. Equ. \textbf{251} (2011), 2143–2163.


\bibitem{book_Barlow}
M. T. Barlow, \emph{Random Walks and Heat Kernels on Graphs}, London Mathematical Society Lecture Note Series, vol.~438, Cambridge University Press, 2017.







\bibitem{deP}
A. De Pablo, \emph{An introduction to the problem of blow-up for semilinear and quasilinear parabolic equations}, MAT Serie A \textbf{12} (2006).



\bibitem{vCr} D.E. von Criegern, \emph{Nonexistence results for a general class of parabolic problems with a potential on weighted graphs}, Nonlinear Differ. Equ. Appl. {\bf 33}, 44 (2026).






\bibitem{FR} F. Fischer, C. Rose, \emph{Optimal Poincaré–Hardy-type inequalities on manifolds and graphs}, Indagationes Mathematicae (2025).








\bibitem{Grig1}
A. Grigor’yan, \emph{Introduction to analysis on graphs}, AMS University Lecture Series \textbf{71}, 2018.












\bibitem{GMP2}
G. Grillo, G. Meglioli, F. Punzo, \emph{Blow-up and global existence for semilinear parabolic equations on infinite graphs},
Calc. Var. Part. Diff. Eq. {\bf 65} 114 (2026).

\bibitem{HSS}
T. Hasegawa, H. Saigo, S. Saito, S. Sugiyama, \emph{Lattice sums of $I$-Bessel functions, theta functions, linear codes and heat equations}, Res. Math. Sci. \textbf{11}, 62 (2024).


\bibitem{GSXX} Q. Gu, Y. Sun, J. Xiao, F. Xu, {\it Global positive solution to a semilinear parabolic equation with
potential on Riemannian manifold}, Calc. Var. Partial Diff. Eq. {\bf 59} 170 (2020).








\bibitem{KLWb}
M. Keller, D. Lenz,  R.K. Wojciechowski, \emph{Graphs and discrete Dirichlet spaces}, Springer, 2021.









\bibitem{LW1}
Y. Lin, Y. Wu, \emph{Blow-up problems for nonlinear parabolic equations on locally finite graphs}, Acta Math. Sci. \textbf{38B} (3) (2018), 843–856.


\bibitem{LW2}
Y. Lin, Y. Wu, \emph{The existence and nonexistence of global solutions for a semilinear heat equation on graphs}, Calc. Var. Part. Diff. Eq. \textbf{56}, 102 (2017).











\bibitem{MPS2}
D. D. Monticelli, F. Punzo,  J. Somaglia, \emph{Nonexistence of solutions to parabolic problems with a potential on weighted graphs}, J. Differential Equations \textbf{453} (2026), 113782.


\bibitem{Mu} D. Mugnolo, “Semigroup Methods for Evolution Equations on Networks”, Springer (2016).

\bibitem{book_Pazy}
A. Pazy, \emph{Semigroups of Linear Operators and Applications to Partial Differential Equations}, Applied Mathematical Sciences, vol.~44, Springer-Verlag, New York, 1983.






\bibitem{PSa}
F. Punzo, A. Sacco, \emph{On a semilinear parabolic equation with time-dependent source term on infinite graphs},  J. Evol. Equ. {\bf 26} 13 (2026).

\bibitem{PZ1} F. Punzo, F. Zucchero, {\it On a semilinear heat equation on infinite graphs I: blow-up for large initial data}, preprint (2026).



\bibitem{book_Stein_Shakarchi}
R. Shakarchi, E.M. Stein, \emph{Fourier Analysis: An Introduction}, Princeton Lectures in Analysis \textbf{1}, Princeton University Press, Princeton, NJ, 2003.


\bibitem{W}
L. F. Wang, \emph{Heat kernel and monotonicity inequalities on the graph}, J. Geom. Anal. \textbf{33}, 38 (2023).




\bibitem{Weiss1} F.B. Weissler, {\it $L^p$ energy and blow-up for a semilinear heat equation}, Proc. Sympos. Pure Math. {\bf 45} (1986) 545-551.

\bibitem{Weiss2}
F.B. Weissler, \emph{Existence and nonexistence of global solutions for a semilinear heat equation}, Israel J. Math. \textbf{38} (1981), 29–40.


\bibitem{Woj}
R. K. Wojciechowski, \emph{Heat kernel and essential spectrum of infinite graphs}, Indiana Univ. Math. J. \textbf{58}, no. 3 (2009), 1419–1441.


\bibitem{Wu}
Y. Wu, \emph{On nonexistence of global solutions for a semilinear heat equation on graphs}, Nonlinear Anal. \textbf{171} (2018), 73–84.






%
%
%
%
%
%
%
%
%
%
%
%
%
%
%
%
%
%
%
%
%
%
%
%
%
%
%
%
%
%
%
%
%
%
%
%
%
%
%
%
%
%
%
%
%
%
%
%
%
%
%
%
%
%
%
%
%
%
%
%
%
%
%
%
%
%
%
%
%
%
%
%
%
%
%
%
%
%
%
%
%
%
%
%
%
%
%
%
%
%
%
%
%
%
%


\end{thebibliography}
\end{document}